\Crefname{ALC@unique}{Line}{Lines}
\def\d{\mathrm{d}}
\def\p{\partial}
\def\var{\mathbf{Var}}
\def\tilde{\widetilde}
\def\hat{\widehat}
\def\bar{\overline}
\def\ubar{\underline}
\def\mb{\mathbb}
\def\mbf{\mathbf}
\def\mc{\mathcal}
\def\mr{\mathrm}
\def\msc{\mathscr}
\def\bra{\left\langle}
\def\ket{\right\rangle}
\def\ra{\rightarrow}
\def\epsilon{\varepsilon}
\def\Rd{\mathbb{R}^{d}}
\crefname{hypothesis}{Hypothesis}{Hypotheses} % <- Preamble
\crefname{example}{Example}{Example} % <- Preamble
\colorlet{texcscolor}{blue!50!black}
\colorlet{texemcolor}{red!70!black}
\colorlet{texpreamble}{red!70!black}
\colorlet{codebackground}{black!25!white!25}
\lstdefinestyle{siamlatex}{%
  style=tcblatex,
  texcsstyle=*\color{texcscolor},
  texcsstyle=[2]\color{texemcolor},
  keywordstyle=[2]\color{texemcolor},
  moretexcs={cref,Cref,maketitle,mathcal,text,headers,email,url},
}
\DeclareTotalTCBox{\code}{ v O{} }
{ %fontupper=\ttfamily\color{texemcolor},
  fontupper=\ttfamily\color{black},
  nobeforeafter,
  tcbox raise base,
  colback=codebackground,colframe=white,
  top=0pt,bottom=0pt,left=0mm,right=0mm,
  leftrule=0pt,rightrule=0pt,toprule=0mm,bottomrule=0mm,
  boxsep=0.5mm,
  #2}{#1}
\patchcmd\newpage{\vfil}{}{}{}
\title{Hamilton-Jacobi-Bellman equations in the Wasserstein space
for the optimal control of the Kushner-Stratonovich equation\thanks{%Submitted to the editors DATE.
This version was revised on February 28, 2025.
\funding{%Funding information goes here.}}}
The work was supported in part by the National Key Research
and Development Program of China under Grant 2022YFA1006102, 
and in part by the NSF of China under Grant 12471418.}}}
\author{
Hexiang Wan\thanks{School of Control Science and Engineering,
Shandong University,
Jinan, Shandong 250061, People's Republic of China
(\email{math.hilbertwan@gmail.com}).} %, \email{wguangchen@sdu.edu.cn}
%\and Guangchen Wang\footnotemark[2]
\and Jie Xiong\thanks{%Corresponding author.
Department of Mathematics
and SUSTech International Center for Mathematics,
Southern University of Science and Technology,
Shenzhen, Guangdong 518055, People's Republic of China
(\email{xiongj@sustech.edu.cn}).}
}
\begin{document}
\maketitle

%% ------------------------------------------------------------------
%% ABSTRACT
%% ------------------------------------------------------------------
%\begin{tcbverbatimwrite}{tmp_\jobname_abstract.tex}
\begin{abstract}
This paper develops a comparison theorem for viscosity solutions of
a new class of Hamilton-Jacobi-Bellman (HJB) equations,
which is used to solve the separated problem governed by
the K-S equation in the Wasserstein space.
A distinctive feature of these HJB equations is
the simultaneous presence of variational and Lions derivatives,
an inevitable consequence of a nonzero observation function.
Moreover, the presence of state-dependent correlated noise adds
further complexity to the analysis,
making the proof of the comparison theorem more challenging.
The core proof strategy for the comparison theorem is to introduce
a novel adaptation of the doubling variables argument,
specifically tailored to tackle the challenges posed by the Wasserstein space.
To this end, we construct an entirely new bivariate functional
that combines viscosity sub/supsolutions,
a smooth gauge-type function, and two Gaussian regularized entropy penalizations.
The gauge-type function compensates for the non-differentiability
(in the Lions sense) of the 2-Wasserstein distance,
while the entropy penalizations ensure that the maximal point of the functional is well-behaved.
Another major contribution of this paper is the derivation of
the second-order variational and Lions derivatives of
the gauge-type function and entropy functional,
which are crucial for dealing with the second-order HJB equation.
Meanwhile, the simple structure and pleasing regularity of these derivatives
make the methodologies developed in this paper applicable
within a wider range of theoretical settings.
This paper gives the first result concerning the uniqueness of viscosity solutions
for second-order HJB equations with variational derivatives in the Wasserstein space.
\end{abstract}

\begin{keywords}
comparison theorem,
entropy penalization,
Hamilton-Jacobi-Bellman equation,
Kushner-Stratonovich equation,
%optimal control of partially observed diffusion,
viscosity solution,
Wasserstein space
\end{keywords}

\begin{MSCcodes}
35B51,
35R15,
49L25,
93E11.
\end{MSCcodes}
%\end{tcbverbatimwrite}
%\input{tmp_\jobname_abstract.tex}
%% ------------------------------------------------------------------
%% END HEADER
%% ------------------------------------------------------------------

\section{Introduction} % \label{sec:intro}

\subsection{Main result}

This paper is devoted to investigating the \emph{comparison theorem} (see \Cref{thm:comparison})
for viscosity solutions of a new class of second-order elliptic HJB equations,
which correspond to the optimal control problem driven by
the Kushner-Stratonovich (K-S) equation.
Such HJB equations have the following form
\begin{align}
    &\ u(\mu) - \inf_{\gamma\in\Gamma} \left\{
    L(\mu,\gamma) + \msc{A}^{\gamma}u(\mu) \right\} \notag \\
    =&\ u(\mu) - \mu\otimes\mu \bra h - \mu(h),\sigma^{2,\top}\partial_{x}
    \frac{\delta^{2}u(\mu)}{\delta\mu^{2}} \ket_{\mb{R}^{q}} \notag \\
    &\ - \frac{1}{2} \mu\otimes\mu \left\{
    \bra h - \mu(h),h - \mu(h) \ket_{\mb{R}^{q}} \frac{\delta^{2}u(\mu)}{\delta\mu^{2}}
    + \mr{Trace} \left[ \sigma^{2} \sigma^{2,\top}
    \partial_{\mu\mu}^{2}u(\mu) \right] \right\} \label{eq:HJB} \\
    &\ - \inf_{\gamma\in\Gamma}\left\{L(\mu,\gamma) +
    \mu\left( \bra b,\partial_{\mu}u(\mu) \ket_{\mb{R}^{d}}
    + \frac{1}{2}\mr{Trace} \left[ a^{}\partial_{x}\partial_{\mu}u(\mu) \right] \right)
    \right\} \notag \\
    =&\ 0~~\text{in}~~\mc{P}_{}(\Rd), \notag
\end{align}
where $\mc{P}(\mb{R}^{d})$ is the space of Borel probability measures on $\Rd$,
and $\msc{A}^{\gamma}$ is a specific degenerate
second-order differential operator on $\mc{P}(\Rd)$.
The HJB equation incorporates both \emph{variational derivatives} (cf. \Cref{def:variational})
and \emph{Lions derivatives} (cf. \Cref{def:Lions}),
as shown in \Cref{eq:HJB}.
This indicates that the well-posedness results from previous work,
which only relied on Lions derivatives, are inapplicable to this study.
Simultaneously, we permit the observation function $h(\cdot)\neq0$
in the state-observation pair model (see \Cref{eq:state-observation})
and allow the correlated noise to be state-dependent,
which essentially causes the appearance of the variational derivatives
$\p_{x}\frac{\delta^{2}u(\mu)}{\delta\mu^{2}}$ and $\frac{\delta^{2}u(\mu)}{\delta\mu^{2}}$
in \Cref{eq:HJB}.

The primary motivation for studying \Cref{eq:HJB} in $\mc{P}(\Rd)$ is
the partially observed stochastic optimal control problem with \emph{non-smooth coefficients}.
A standard strategy is to transform the original problem into a \emph{separated problem},
in which case the K-S equation or Zakai equation serves as a new control system.
Nevertheless, despite its infinite-dimensional state space,
the separated problem is an optimal control problem with complete observation.
Moreover, The separated problem described by the K-S equation investigated in this study
is particularly well-suited for scenarios wherein
\emph{the density of the optimal filter either does not exist
or exhibits insufficient regularity}.

\subsection{Previous results}

Over the past few decades, there has been an ongoing investigation of
partial differential equations (PDEs) on infinite-dimensional spaces.
The theory of first- and second-order PDEs in Hilbert spaces
has been thoroughly investigated.
The monograph \cite{FGS2017} details the fruitful results to date
in the theory of HJB equations in the Hilbert space framework.
The separated problem modeled by the Zakai equation in this setting has also been satisfactorily resolved
(see, e.g., \cite{GS2000,Lions1989}). % BKOW2004,

The initial interest in PDEs in $\mc{P}(\Rd)$ stems from the separated problem
created by the study of partially observed optimal control problems.
However, in this situation,
there are relatively few studies that deal with the separated problem.
For instance, early literature \cite{Fleming1982} merely validated
the dynamic programming principle in $\mc{P}(\Rd)$,
while \cite{Hijab1990} established the existence of viscosity solutions for
the HJB equation in $\mc{P}(\Rd)$ but failed to obtain uniqueness.
A renewed interest in these PDEs in $\mc{P}(\Rd)$ has emerged in recent years
as a result of their connection with mean field control and mean field game problems.
In turn, recent research in this field has provided us with new tools to
resolve the separated problem.

Fortunately, the study of viscosity solutions for HJB equations in the Wasserstein space
has made substantial advances in recent years
owing to the accelerated development of mean field-related theory.
We do not intend to assemble a full list of pertinent literature;
rather, we will provide a concise overview of several strategies
for proving the uniqueness of viscosity solutions in the Wasserstein space
that are relevant to the topic of this paper.
A few years ago, Bandini et al. \cite{BCFP2019} proposed the control randomization method
and obtained a new HJB equation in the Wasserstein space for the separated problem
driven by the Zakai equation.
They adopted the \emph{lifting method} to transform the HJB equation into Hilbert space
and subsequently made use of the existing result in Hilbert space (i.e., \cite{GS2000})
to illustrate the uniqueness of viscosity solutions. 
If the correlated noise in the state-observation pair model is state-independent,
the second-order Lions derivative in the HJB equation of the separated problem
governed by the Zakai equation
is analogous to the finite-dimensional Laplacian operator,
according to the authors of \cite{BEZ2023}.
They utilized the Borwein-Preiss variational principle to overcome the difficulty of
the Wasserstein space not being locally compact,
then combined it with the Crandall-Ishii lemma to prove the comparison theorem.
The approach they employed is known as \emph{doubling variables with weaker metrics}.
Such an approach is also applicable to complete observation scenarios
(see, for instance, \cite{DJS2025,SY2024}).
%For example, \cite{DJS2023,SY2024} also implemented a similar methodology
%of doubling variables with weaker metrics.
We point out that both papers \cite{BCFP2019,BEZ2023}
address the case where the observation function $h(\cdot)\equiv0$.

The third way to demonstrate the uniqueness of viscosity solutions
for HJB equations in the Wasserstein space is to use the method of
\emph{approximating the value function with smooth functions} (cf. \cite{CTQ2025,CGKPR2024}).
This method also necessitates the Borwein-Preiss variational principle
when handling the problem in the Wasserstein space.
It is worth noting that such a variational principle is also critical
in the setting of viscosity solutions for path-dependent HJB equations (cf. \cite{JJZhou2023}).
Finally, we present the recently developed fourth method:
\emph{doubling variables with entropy penalization} (see \cite{DS2024}).
It ought to point out that Feng and Kurtz
exploited the entropy penalization technique to explore large deviations
and comparison theorems for various kinds of first-order Hamilton-Jacobi equations;
see their monograph \cite{FK2006} and the literature therein.
However, as indicated in \cite{DJS2025},
the fourth methodology is currently unable to address the second-order HJB equation,
particularly in the presence of correlated noise.
To the authors' knowledge, the third and fourth methods
have not yet been used to deal with the separated problem.

By restricting the scope of the research to partially observed stochastic systems
and selecting $\mc{P}(\mb{R}^{d})$ or the Wasserstein space as the state space,
we compare this paper with multiple papers in five aspects,
as illustrated in the table below.

\begin{table}[tbhp]
\footnotesize
\caption{Comparisons between our research and several existing pertinent articles.}
\label{table:comparison}
\begin{center}
\begin{tabular}{|c||c|c|c|c|c|}
\hline \makecell{\textbf{References} \\ \textbf{(sort by}\\ \textbf{time} \\ \textbf{from top}
       \\ \textbf{to bottom)}}
       & \makecell{\textbf{The}\\ \textbf{system} \\ \textbf{equation} \\
       \textbf{for the} \\ \textbf{separated} \\ \textbf{problem}}
       & \makecell{\textbf{Is the} \\ \textbf{observation} \\ \textbf{function}
       \\ \textbf{$h(\cdot)\neq0$?}}
       & \makecell{\textbf{Is the} \\ \textbf{correlated} \\ \textbf{noise $\sigma^{2}(\cdot)$}
       \\ \textbf{state-} \\ \textbf{dependent?}}
       & \makecell{\textbf{Does the} \\ \textbf{HJB} \\ \textbf{equation} \\ \textbf{contain}
       \\ \textbf{variational} \\ \textbf{derivatives?}}
       & \makecell{\textbf{Has the} \\ \textbf{uniqueness}
       \\ \textbf{of viscosity} \\ \textbf{solutions} \\ \textbf{been} \\ \textbf{proven?}} \\
\hline Hijab \cite{Hijab1990} & K-S eq. & Yes & No & No & No \\
\hline \makecell{Bandini \\ et al. \cite{BCFP2019}}
       & Zakai eq. & No & Yes & No & Yes \\
\hline \makecell{Bayraktar \\ et al. \cite{BEZ2023}} & Zakai eq. & No & No & No & Yes \\
\hline Our paper & K-S eq. & Yes & Yes & Yes & Yes \\
\hline
\end{tabular}
\end{center}
\end{table}

\subsection{Comments on the proof}

We reiterate that this paper discusses
a large class of separated problems characterized
by the K-S equation in the Wasserstein space,
appropriate for situations where the probability density
is either nonexistent or its regularity is insufficient.

In this paper, we adopt the method of doubling variables with entropy penalization to
establish a comparison theorem for viscosity solutions of \Cref{eq:HJB}.
One of the most important technical parts of the proof
is demonstrating the regularity properties
of the maximum point of the following bivariate functional
\begin{equation*}
 \begin{aligned}
  \mc{P}_{2}(\mb{R}^{d})\times\mc{P}_{2}(\mb{R}^{d})\ni
  (\mu,\nu)\mapsto u^{1}(\mu) - u^{2}(\nu) - \frac{\alpha}{2}\mc{G}(\mu,\nu)
  - \beta\left\{ \mc{E}(\mu) + \mc{E}(\nu) \right\}.
 \end{aligned}
\end{equation*}
Here, $u^{1}$ and $u^{2}$ represent the sub- and super-solutions, respectively.
$\mc{G}$ is a gauge-type function (cf. \Cref{def:gauge}),
$\mc{E}(\mu)=\int_{\mb{R}^{d}} \log(\mu)\d\mu$
represents the entropy of $\mu\in\mc{P}_{\mr{ac},2}(\mb{R}^{d})$ (cf. \Cref{def:entropy}),
and $\alpha,\beta>0$ are small parameters.
See \Cref{thm:max} for the proof of the above result,
which employs the fact that any bounded sequences in 2-Wasserstein space
are sequentially compact using the 1-Wasserstein topology.
Actually, we have mentioned previously a different approach for dealing with
the absence of local compactness in the Wasserstein space:
utilizing a gauge-type function and the Borwein-Preiss variational principle
to produce maximum/minimum.
We emphasize that the former method is more appropriate
for the entropy penalization scenario.

As a prerequisite for our successful treatment of
the variational derivatives in \Cref{eq:HJB},
we must calculate the second-order variational derivatives
and Lions derivatives of the gauge-type function and entropy functional.
It is worthwhile to point out that the second-order Lions derivative of
the gauge-type function constructed in \cite{CGKPR2024} is unbounded (see \Cref{rmk:gauge}).
As a result, regardless of whether the HJB equation contains variational derivatives,
the gauge-type function in \cite{CGKPR2024} cannot handle the general second-order HJB equation
in the Wasserstein space.
Therefore, in this paper, we modify the gauge-type function in \cite{CGKPR2024}
to guarantee the boundedness of its second-order Lions derivative.
Finally, we highlight that the entropy functional necessitates Gaussian regularization,
which is vital in calculating its second-order variational derivative
(see \Cref{rmk:Gauss}).

The paper is structured as follows.
In \Cref{sec:separated}, we formulate our
separated problem driven by the K-S equation
and introduce some basic notation and assumptions.
\Cref{sec:viscosity} introduces the concepts of variational derivatives,
Lions derivatives, dynamic programming principles,
HJB equations, and viscosity solutions.
In \Cref{sec:entropy}, we provide a comprehensive investigation of
the gauge-type function, Gaussian regularized entropy penalization, and their derivatives,
and we finalize this section with a maximum principle.
\Cref{sec:comparison} is devoted to proving the comparison theorem,
which implies the uniqueness of viscosity solutions.
In \Cref{sec:app}, we define weakly admissible control strategies
and present the separated problem
for a broad class of finite-dimensional state-observation pairs.
Furthermore, we offer a succinct exploration of potential applications of
the separated problem within the framework of state-observation pairs.
The paper concludes with a few comments in the final section.

\section{The separated problem formulated by the K-S equation}\label{sec:separated}

At the beginning of this section,
we list the main symbols used throughout the paper,
as well as the definitions of the basic function spaces.
In applications to physics and engineering,
\emph{test functions} are usually infinitely differentiable
real-valued functions with compact support that are defined on $\Rd$. 
The set of all such test functions forms a vector space that is denoted by
$C_{0}^{\infty}(\Rd)$ or $\mc{D}(\Rd)$.
For instance, bump functions are test functions in $\mc{D}(\Rd)$,
whereas Gaussian functions are test functions
in the space of rapidly decreasing functions on $\Rd$
without compact support (Schwartz space).
A \emph{distribution} on $\Rd$
is an element of the continuous dual space of $\mc{D}(\Rd)$
when $\mc{D}(\Rd)$ is endowed with the locally convex topology.
% \emph{canonical LF topology}.
% The name \emph{LF} stands for \textbf{L}imit of \textbf{F}r\'{e}chet spaces.
The space of all distributions on $\Rd$ is usually denoted by $\mc{D}^{\prime}(\Rd)$.
The angle brackets $\bra \mc{T},\phi \ket:=\mc{T}(\phi)$ represent the canonical duality product
between the distribution $\mc{T}$ on $\Rd$ and the test function $\phi\in\mc{D}(\Rd)$.

Let $k\in\mb{N}$ and $p\in[1,\infty]$.
Let $C_{b}^{k}(\Rd)$ denote the vector space of all $k$-times
bounded continuously differentiable real-valued functions on $\Rd$.
In the distributional sense,
the Sobolev space $W^{k,p}(\Rd)$ is defined as the collection of all functions on $\Rd$
with derivatives up to order $k$ belonging to $L^{p}(\Rd)$.
We will also use the notation $H^{k}(\Rd)\triangleq W^{k,2}(\Rd)$.
The space $L_{\mr{loc}}^{p}(\Rd)$ consists of locally $L^{p}$-integrable functions,
which are $L^{p}$-integrable on every compact subset of its domain of definition.
For $\mu\in\mc{P}(\Rd)$, we define $L^{p}(\mu;\Rd)$ as the set of all
$\Rd$-valued functions that satisfy
\begin{align*}
  \Vert \phi \Vert_{L^{p}(\mu;\Rd)} := \left( \int_{\Rd}
  \Vert \phi(x) \Vert_{\Rd}^{p} \d \mu(x)
  \right)^{\frac{1}{p}} < \infty.
\end{align*}

Let us recall that a Borel probability measure $\mu$ in $\Rd$ belongs to $\mc{P}_{p}(\Rd)$
if  the $p$-moment $\int_{\mb{R}^{d}} |x|^{p} \d \mu(x)$ is finite.
The $p$-Wasserstein distance between $\mu,\nu\in\mc{P}_{p}(\Rd)$
is defined by
\begin{equation*}
 \begin{aligned}
   \mc{W}_{p}^{}(\mu,\nu) := \inf_{\pi\in\Pi(\mu,\nu)} \left(
   \iint_{\Rd\times\Rd} |x-y|^{p} \d \pi(x,y) \right)^{\frac{1}{p}},
 \end{aligned}
\end{equation*}
where $\Pi(\mu,\nu)\subset\mc{P}(\mb{R}^{d}\times\mb{R}^{d})$
denotes the set of Borel probability measures $\pi$ with marginal distributions $\mu$ and $\nu$.
It turns out that $(\mc{P}_{p}(\Rd),\mc{W}_{p})$
is a complete and separable metric space (cf. \cite[Lecture 8]{ABS2021}).
The space of absolutely continuous probability measures
with finite $p$-moment will be denoted by
\begin{equation*}
 \begin{aligned}
   \mc{P}_{\mr{ac},p}(\mb{R}^{d}) := \left\{ \mu\in\mc{P}_{p}(\mb{R}^{d}):
   \d \mu(x) = \mu(x) \d x~\text{for some $\mu(x)\in L^{1}(\mb{R}^{d})$} \right\}.
 \end{aligned}
\end{equation*}
It is easy to show that $\mc{P}_{\mr{ac},p}(\Rd)$ is dense in $\mc{P}_{p}(\Rd)$
when the distance $\mc{W}_{p}$ is taken.
As soon as a functional $\digamma:\mc{P}_{p}(\Rd)\ra\mb{R}^{1}$ is Lipschitz continuous
with respect to the distance $\mc{W}_{p}$,
its Lipschitz constant is denoted by
\begin{align*}
  \mr{Lip}(\digamma;\mc{W}_{p}) := \sup_{\mu,\nu\in\mc{P}_{p}(\Rd),~\mu\neq\nu}
  \frac{\left|\digamma(\mu) - \digamma(\nu) \right|}{\mc{W}_{p}(\mu,\nu)}.
\end{align*}

Let $(\Omega,\mc{F},\mb{P})$ be a complete probability space
with a filtration $(\mc{F}_{t})_{t\geq0}$ that satisfies the usual conditions.
On $(\Omega,\mc{F},\mb{P})$,
we consider a $q$-dimensional Wiener process $\beta$.
Let the control region $\Gamma\subset\mb{R}^{m}$ be convex and compact.
The $\mc{P}(\mb{R}^{d})$-valued process $\pi_{t}$ satisfies the
K-S equation
\begin{equation}\label{eq:Kushner}
 \begin{aligned}
  \pi_{t}(\phi) = \pi_{0}(\phi)
  + \int_{0}^{t} \pi_{r}(\msc{L}^{\gamma}\phi) \d r
  + \int_{0}^{t}
  \bra \pi_{r}(h\phi + \msc{M}^{}\phi) - \pi_{r}(h)\pi_{r}(\phi),\d \beta_{r} \ket_{\mb{R}^{q}},
 \end{aligned}
\end{equation}
where $\phi\in\mc{D}(\Rd)$, $\pi_{0}\sim\mu\in\mc{P}(\mb{R}^{d})$, and $\pi_{0}(\phi)\triangleq\mu(\phi)$.
The differential operators $\msc{L}^{\gamma}$ and $\msc{M}^{}$ are defined as follows
\begin{align*}
  (\msc{L}^{\gamma}\phi)(x) =&\ \frac{1}{2}
  \sum_{i,j=1}^{d} \left(\sigma^{1}\sigma^{1,\top}
  + \sigma^{2}\sigma^{2,\top}\right)_{ij}(x,\gamma)\partial_{ij}^{2}\phi(x)
  + \sum_{i=1}^{d} b_{i}(x,\gamma)\partial_{i}\phi(x) \\
  \triangleq&\ \frac{1}{2}
  \sum_{i,j=1}^{d} a_{ij}^{}(x,\gamma)\partial_{ij}^{2}\phi(x)
  + \sum_{i=1}^{d} b_{i}(x,\gamma)\partial_{i}\phi(x), \\
  (\msc{M}^{}\phi)(x) =&\
  \sigma^{2,\top}(x) \partial_{x}\phi(x),
  %+ h(x,\gamma)\phi(x),\quad \forall\gamma\in\Gamma,
\end{align*}
where
$\sigma^{1}(\cdot,\cdot):\mb{R}^{d}\times\Gamma\ra\mb{R}^{d\times d}$,
$\sigma^{2}(\cdot):\mb{R}^{d}\ra\mb{R}^{d\times q}$,
$b(\cdot,\cdot):\mb{R}^{d}\times\Gamma\ra\mb{R}^{d}$,
and $h(\cdot):\mb{R}^{d}\ra\mb{R}^{q}$
are all deterministic measurable functions.
Here, the operator $\msc{M}$ represents the impact caused by correlated noise.
%It should be noted that $a(\cdot,\cdot)$ is a positive semi-definite matrix-valued function.
Typically we set $\mc{D}(\msc{L}^{\gamma})=C_{b}^{2}(\mb{R}^{d})$,
$\mc{D}(\msc{M}^{})=C_{b}^{1}(\mb{R}^{d})$.
Let $\mc{F}_{t}^{\beta}=\sigma\{ \beta_{r},0\leq r\leq t \}$
symbolize the filtration generated by $\beta$.
In this model, we assume that the admissible control process
is $\mc{F}_{t}^{\beta}$-progressively measurable with values in $\Gamma$.
We denote by $\bm{\Gamma}$ the set of admissible control processes.

Now, let us make necessary assumptions with regard to the above model.

\begin{hypothesis}\label{hyp:BC}
The functions $b$, $\sigma^{1}$, $\sigma^{2}$ and $h$
are bounded and Lipschitz continuous functions, i.e.,
there exists a positive constant $C_{}$ such that
for all $x,y\in\Rd, \gamma\in\Gamma$,
\begin{align*}
  &\ |b(x,\gamma)| + |\sigma^{1}(x,\gamma)| + |\sigma^{2}(x)| + |h(x)| \leq C_{}, \\
  &\ |b(x,\gamma)-b(y,\gamma)| + |\sigma^{1}(x,\gamma)-\sigma^{1}(y,\gamma)|
  + |\sigma^{2}(x)-\sigma^{2}(y)| + |h(x)-h(y)| \leq C_{}|x-y|.
\end{align*}
\end{hypothesis}

We apply the \emph{particle system approximation method} from Kurtz and Xiong \cite{KX1999}
to establish the well-posedness of \Cref{eq:Kushner}
under the assumptions of \Cref{hyp:BC},
which are sufficient for this purpose.
In this context, \emph{well-posedness} refers to the existence and uniqueness of a strong solution
in the filtered probability space $(\Omega,\mc{F},\{\mc{F}_{t}^{}\}_{t\geq 0},\mb{P})$.

The aim of our optimal control problem is to minimise,
over all admissible control processes $\gamma_{\cdot}\in\bm{\Gamma}$, the criterion
\begin{align}\label{eq:cost-1}
  \mc{J}(\mu;\gamma_{\cdot}) =&\ \mb{E}\left[ %^{\mb{P}}
  \int_{0}^{\infty} e^{-t} L(\pi_{t},\gamma_{t}) \d t \right],
\end{align}
where $\mu=\pi_0$ represents the initial distribution of $\pi_{t}$,
and $L(\cdot,\cdot):\mc{P}(\Rd)\times\Gamma\ra\mb{R}^{1}$
is a deterministic measurable function that satisfies the following condition.

\begin{hypothesis}\label{hyp:Lip1}
$L$ is Lipschitz continuous in $\mu\in\mc{P}(\Rd)$ with respect to the $\mc{W}_{1}$ topology,
uniformly in $\gamma\in\Gamma$.
\end{hypothesis}

We define the \emph{value function} for the optimal control problem as follows
\begin{equation}\label{eq:valuef}
 \begin{aligned}
  u(\mu) := \inf_{\gamma_{\cdot}\in\bm{\Gamma}} \mc{J}(\mu;\gamma_{\cdot})
  = \inf_{\gamma_{\cdot}\in\bm{\Gamma}} \mb{E} \left[ \int_{0}^{\infty}
  e^{-t} L(\pi_{t},\gamma_{t}) \d t \right].
 \end{aligned}
\end{equation}
The paper characterizes the value function $u$ via viscosity solutions.

\section{Viscosity solutions of the HJB equation}
\label{sec:viscosity}

We step through the mathematical tools necessary to define viscosity solutions.

\begin{definition}[Variational Derivative]\label{def:variational}
A functional $\upsilon:\mc{P}_{2}(\mb{R}^{d})\ra\mb{R}^{1}$ is said to have
a variational derivative if there is a map % jointly continuous
$\frac{\delta\upsilon}{\delta\mu}:\mc{P}_{2}(\mb{R}^{d})\times\mb{R}^{d}\ra\mb{R}^{1}$
such that for every $\mu,\nu\in\mc{P}_{2}(\mb{R}^{d})$, it holds that
\begin{equation*}
 \begin{aligned}
  \upsilon(\mu) - \upsilon(\nu) = \int_{0}^{1} \int_{\mb{R}^{d}}
  \frac{\delta\upsilon}{\delta\mu}\left(t\mu + (1-t)\nu,x \right)
  \d (\mu - \nu)(x) \d t.
 \end{aligned}
\end{equation*}
In the same way, we can continue to define higher-order variational derivatives.
The class of functions that are differentiable of order $k$
in the variational sense
is denoted by $\mc{C}^{k}(\mc{P}_{2}(\Rd))$.
\end{definition}

\begin{definition}[Lions Derivative]\label{def:Lions}
A functional $\upsilon\in\mc{C}^{1}(\mc{P}_{2}(\mb{R}^{d}))$ is said to be
first-order L-differentiable if the map
$x\mapsto\frac{\delta\upsilon}{\delta\mu}(\mu,x)$
is differentiable. We set
\begin{equation*}
 \begin{aligned}
  \partial_{\mu}\upsilon(\mu,x):=
  \partial_{x}\frac{\delta\upsilon}{\delta\mu}(\mu,x)\in\mb{R}^{d}.
 \end{aligned}
\end{equation*}
$\mc{C}_{L}^{1}(\mc{P}_{2}(\mb{R}^{d}))$ denotes this class of functions.
Similarly, if the map $x\mapsto\partial_{\mu}\upsilon(\mu,x)$ is twice differentiable,
we also define
\begin{equation*}
 \begin{aligned}
  \partial_{\mu\mu}^{2}\upsilon(\mu,x,y)\triangleq\partial_{\mu}(\partial_{\mu}\upsilon(\mu,x))(y)
  := \partial_{x}\partial_{y}^{\top} \frac{\delta^{2}\upsilon}{\delta\mu^{2}}(\mu,x,y).
 \end{aligned}
\end{equation*}
We denote by $\mc{C}_{L}^{2}(\mc{P}_{2}(\mb{R}^{d}))$
the set of this class of functions.
\end{definition}
Further information about variational derivatives, Lions derivatives,
and their connections can be found in \cite[Section 5.4.1]{CD2018-1}
and \cite[Section 2]{Martini2023}.

%\begin{definition}[Partial Hessian Operator]
%If $\upsilon\in\mc{C}_{L}^{2}(\mc{P}(\mb{R}^{d}))$,
%the correlated noise operator is defined by
%\begin{equation*}
% \begin{aligned}
%  \left( \msc{H}\upsilon \right)(\mu) := &\
%  \partial_{xx}^{2} \upsilon\left( (\mr{Id}+x)\sharp\mu \right) \Big|_{x=0}
%  \left(\in\mb{R}^{d\times d}\right) \\
%  =&\ \iint_{\mb{R}^{d}\times\mb{R}^{d}}
%  \partial_{\mu\mu}^{2} \upsilon(\mu)(x,y) \d \mu(x) \d \mu(y)
%  + \int_{\mb{R}^{d}} \partial_{x}\partial_{\mu} \upsilon(\mu)(x) \d \mu(x),
% \end{aligned}
%\end{equation*}
%where $(\mr{Id}+x)\sharp\mu$ is the pushforward measure of $\mu$ by
%the translation map $x^{\prime}\mapsto x^{\prime}+x$.
%\end{definition}

Let $\phi^{1},\phi^{2}\in C_{b}(\mb{R}^{d})$
and $\phi^{3}\in C_{b}(\mb{R}^{d}\times\mb{R}^{d})$.
We adopt the following notations
\begin{align*}
  (\mu\otimes\mu)(\phi^{1}\phi^{2}) := &\ \iint_{\mb{R}^{2d}}
  \phi^{1}(x)\phi^{2}(y) \d\mu(x) \d\mu(y), \\
  (\mu\otimes\mu)(\phi^{1}\phi^{2}\phi^{3}) := &\ \iint_{\mb{R}^{2d}}
  \phi^{1}(x)\phi^{2}(y)\phi^{3}(x,y) \d\mu(x)\d\mu(y).
\end{align*}
By engaging these two kinds of derivatives,
we display a new result of the generalization of the It\^{o} formula
in the space of probability measures.

\begin{lemma}[It\^{o} Formula]
Let $\pi_{t}$ be a solution to the K-S equation
and $\upsilon\in\mc{C}_{L}^{2}(\mc{P}_{2}(\mb{R}^{d}))$.
Then it holds that
\begin{align*} % \label{eq:Ito} %for Kushner
  \d \upsilon(\pi_{t}) =&\ \pi_{t}\left( \msc{L}^{\gamma}
  \frac{\delta\upsilon(\pi_{t})}{\delta\mu} \right) \d t
  + \bra \pi_{t}\left[ \left(h + \msc{M}^{} - \pi_{t}(h)\right)
  \frac{\delta\upsilon(\pi_{t})}{\delta\mu} \right], \d \beta_{t} \ket_{\mb{R}^{q}} \\
  &\ + \frac{1}{2} \pi_{t}\otimes\pi_{t} \left[
  \bra h + \msc{M}^{} - \pi_{t}(h),h + \msc{M}^{} - \pi_{t}(h) \ket_{\mb{R}^{q}}
  \frac{\delta^{2}\upsilon(\pi_{t})}{\delta\mu^{2}} \right] \d t.
 \end{align*}
\end{lemma}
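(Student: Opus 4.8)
The plan is to establish the formula first for a rich class of \emph{cylindrical} functionals, where it reduces to the classical finite-dimensional It\^{o} formula, and then to extend it to all of $\mc{C}_{L}^{2}(\mc{P}_{2}(\mb{R}^{d}))$ by approximation.

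\textbf{Step 1 (cylindrical case).} Take $\upsilon(\mu)=F(\mu(\phi_{1}),\dots,\mu(\phi_{n}))$ with $F\in C_{b}^{2}(\mb{R}^{n})$ and $\phi_{1},\dots,\phi_{n}\in\mc{D}(\mb{R}^{d})$. By the K-S equation \Cref{eq:Kushner}, for each $i$ the real-valued process $r\mapsto\pi_{r}(\phi_{i})$ is a continuous semimartingale with finite-variation part $\int_{0}^{r}\pi_{s}(\msc{L}^{\gamma}\phi_{i})\,\d s$ and martingale part $N_{r}^{i}:=\int_{0}^{r}\bra\pi_{s}[(h+\msc{M}-\pi_{s}(h))\phi_{i}],\d\beta_{s}\ket_{\mb{R}^{q}}$, where we used the identity $\pi_{s}(h\phi_{i}+\msc{M}\phi_{i})-\pi_{s}(h)\pi_{s}(\phi_{i})=\pi_{s}[(h+\msc{M}-\pi_{s}(h))\phi_{i}]$. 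Since $\beta$ is a standard $q$-dimensional Wiener process, $\d\bra N^{i},N^{j}\ket_{r}=\bra\pi_{r}[(h+\msc{M}-\pi_{r}(h))\phi_{i}],\pi_{r}[(h+\msc{M}-\pi_{r}(h))\phi_{j}]\ket_{\mb{R}^{q}}\,\d r$. Applying the classical multidimensional It\^{o} formula to $F$ composed with $r\mapsto(\pi_{r}(\phi_{1}),\dots,\pi_{r}(\phi_{n}))$ gives
\begin{align*}
  \d\upsilon(\pi_{t})=\sum_{i}\partial_{i}F\,\pi_{t}(\msc{L}^{\gamma}\phi_{i})\,\d t
  +\sum_{i}\partial_{i}F\,\bra\pi_{t}[(h+\msc{M}-\pi_{t}(h))\phi_{i}],\d\beta_{t}\ket_{\mb{R}^{q}}
  +\frac12\sum_{i,j}\partial_{ij}^{2}F\,\d\bra N^{i},N^{j}\ket_{t},
\end{align*}
with the derivatives of $F$ evaluated at $(\pi_{t}(\phi_{1}),\dots,\pi_{t}(\phi_{n}))$.

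\textbf{Step 2 (identification of the cylindrical derivatives).} For such $\upsilon$ one has $\frac{\delta\upsilon}{\delta\mu}(\mu,x)=\sum_{i}\partial_{i}F(\mu(\phi))\,\phi_{i}(x)$ and $\frac{\delta^{2}\upsilon}{\delta\mu^{2}}(\mu,x,y)=\sum_{i,j}\partial_{ij}^{2}F(\mu(\phi))\,\phi_{i}(x)\phi_{j}(y)$, up to $\mu$-dependent additive terms that are constant in $x$ (resp.\ depend on a single spatial variable); these are annihilated by $\msc{L}^{\gamma}$ and, after integration, by the centering $h-\pi_{r}(h)$, hence they do not contribute. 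Substituting: the drift term becomes $\pi_{t}(\msc{L}^{\gamma}\frac{\delta\upsilon(\pi_{t})}{\delta\mu})$; the martingale term becomes $\bra\pi_{t}[(h+\msc{M}-\pi_{t}(h))\frac{\delta\upsilon(\pi_{t})}{\delta\mu}],\d\beta_{t}\ket_{\mb{R}^{q}}$; and, writing $\mc{O}_{r}^{(k)}:=h_{k}-\pi_{r}(h_{k})+\msc{M}^{(k)}$ for the $k$-th scalar component of $h+\msc{M}-\pi_{r}(h)$ (a first-order differential operator acting on one spatial variable), one has $\d\bra N^{i},N^{j}\ket_{r}=\sum_{k}\pi_{r}\otimes\pi_{r}[(\mc{O}_{r}^{(k)}\phi_{i})(x)(\mc{O}_{r}^{(k)}\phi_{j})(y)]\,\d r$, so pushing $\mc{O}_{r}^{(k)}$ (acting in $x$) and its twin (acting in $y$) through the double sum gives
\begin{align*}
  \frac12\sum_{i,j}\partial_{ij}^{2}F\,\d\bra N^{i},N^{j}\ket_{r}
  =\frac12\,\pi_{r}\otimes\pi_{r}\Big[\bra h+\msc{M}-\pi_{r}(h),\,h+\msc{M}-\pi_{r}(h)\ket_{\mb{R}^{q}}\frac{\delta^{2}\upsilon(\pi_{r})}{\delta\mu^{2}}\Big]\,\d r,
\end{align*}
the two factors of the inner product acting on the two spatial variables of $\frac{\delta^{2}\upsilon}{\delta\mu^{2}}$ respectively. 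This is exactly the asserted identity for cylindrical $\upsilon$.

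\textbf{Step 3 (extension to $\mc{C}_{L}^{2}$), and the main obstacle.} Under \Cref{hyp:BC} the coefficients $b,\sigma^{1},\sigma^{2},h$ are bounded, so for $\pi_{0}=\mu\in\mc{P}_{2}(\mb{R}^{d})$ one has $\sup_{r\le t}\pi_{r}(|x|^{2})<\infty$ a.s.\ (the filter inherits the signal's second moment), $\pi_{\cdot}$ stays in $\mc{P}_{2}(\mb{R}^{d})$, and $r\mapsto\pi_{r}$ is a.s.\ continuous in $\mc{W}_{2}$; moreover the K-S equation extends from $\mc{D}(\mb{R}^{d})$ to $C_{b}^{2}(\mb{R}^{d})$. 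Given $\upsilon\in\mc{C}_{L}^{2}(\mc{P}_{2}(\mb{R}^{d}))$, approximate it by cylindrical $\upsilon_{n}$ with $\upsilon_{n}\to\upsilon$, $\frac{\delta\upsilon_{n}}{\delta\mu}(\mu,\cdot)\to\frac{\delta\upsilon}{\delta\mu}(\mu,\cdot)$ together with its first and second spatial derivatives, and $\frac{\delta^{2}\upsilon_{n}}{\delta\mu^{2}}(\mu,\cdot,\cdot)\to\frac{\delta^{2}\upsilon}{\delta\mu^{2}}(\mu,\cdot,\cdot)$, all uniformly on compacts in the spatial variables and uniformly for $\mu$ in $\mc{W}_{2}$-bounded sets (e.g.\ by mollifying $\upsilon$ in the measure argument and composing with finitely many test-function moments). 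Writing the formula of Steps 1--2 for $\upsilon_{n}$ and letting $n\to\infty$, the left-hand side and the two $\d t$-integrals pass to the limit by dominated convergence using boundedness of $a=\sigma^{1}\sigma^{1,\top}+\sigma^{2}\sigma^{2,\top}$, $b$, $h$, $\sigma^{2}$ together with the a.s.\ second-moment bound on $\pi_{\cdot}$, while the stochastic integral converges in $L^{2}$ (hence, along a subsequence, uniformly on $[0,t]$ in probability) by the It\^{o} isometry and the Burkholder--Davis--Gundy inequality. The arithmetic of Steps 1--2 is pure bookkeeping; the genuinely delicate point is this approximation, namely producing cylindrical approximants that converge \emph{simultaneously} with the first Lions derivative in $C^{2}$ (needed since $\msc{L}^{\gamma}$ is second order) and with the full second variational derivative, \emph{uniformly} over bounded subsets of the non-locally-compact space $\mc{P}_{2}(\mb{R}^{d})$ and over the non-compact state space $\mb{R}^{d}$. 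Alternatively one may bypass any density statement by a direct partition argument: expand each increment $\upsilon(\pi_{t_{j+1}})-\upsilon(\pi_{t_{j}})$ by the second-order variational Taylor formula with quadratic remainder controlled by a modulus of continuity of $\frac{\delta^{2}\upsilon}{\delta\mu^{2}}$ times $\mc{W}_{2}(\pi_{t_{j+1}},\pi_{t_{j}})^{2}$, insert the K-S dynamics into the first-order term and (after a finite tensor approximation $\frac{\delta^{2}\upsilon}{\delta\mu^{2}}(\pi_{t_{j}},x,y)\approx\sum_{l}\phi_{l}^{1}(x)\phi_{l}^{2}(y)$) into the quadratic term, and pass to the limit in the Riemann sums; the $\mr{BV}\otimes\mr{BV}$ and $\mr{BV}\otimes\mr{mart}$ contributions are $o(1)$, while the $\mr{mart}\otimes\mr{mart}$ contribution reproduces exactly the $\pi_{r}\otimes\pi_{r}$ second-order term. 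Either route, only these uniform-convergence and remainder estimates require real care.
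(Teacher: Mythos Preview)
The paper does not prove this lemma; it simply states ``Proof of this lemma is available in \cite{Martini2023}.'' So there is no in-paper argument to compare against, only the cited reference.

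Your Steps 1--2 are correct and constitute exactly the computation the paper records when it writes out $(\msc{A}^{\gamma}\upsilon)(\mu)$ for cylindrical $\upsilon$ immediately after the lemma: apply the finite-dimensional It\^{o} formula to $r\mapsto F(\pi_{r}(\phi_{1}),\dots,\pi_{r}(\phi_{n}))$, read off the quadratic variation from the K-S martingale part, and identify the resulting sums with $\frac{\delta\upsilon}{\delta\mu}$ and $\frac{\delta^{2}\upsilon}{\delta\mu^{2}}$. Your remark that the normalization ambiguities in the variational derivatives are killed by $\msc{L}^{\gamma}$, by $\msc{M}$, and by the centering $h-\pi_{r}(h)$ is exactly right.

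On Step 3 you have correctly located the only substantive difficulty. Of your two proposed routes, the first (density of cylindrical functionals in $\mc{C}_{L}^{2}$ with simultaneous convergence of $\frac{\delta\upsilon}{\delta\mu}$ in $C^{2}_{x}$ and of $\frac{\delta^{2}\upsilon}{\delta\mu^{2}}$, uniformly over $\mc{W}_{2}$-bounded sets) is not known to hold in the generality stated; such a density theorem is genuinely open, and the parenthetical ``e.g.\ by mollifying $\upsilon$ in the measure argument'' does not produce cylindrical approximants. The second route---the direct partition-of-time argument via the second-order variational Taylor expansion of $\upsilon$---is the one that actually works and is essentially what \cite{Martini2023} does (building on the Carmona--Delarue chain-rule machinery). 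So your overall strategy is sound provided you commit to the second alternative in Step~3; the first alternative, as written, is a gap.
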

Proof of this lemma is available in \cite{Martini2023}.
It is important to remember that the It\^{o} formula
encompasses both linear variational derivatives and Lions derivatives.
Therefore, the HJB equation must also incorporate these two kinds of derivatives.
This raises challenges for our subsequent investigation of
viscosity solutions of the HJB equation.

%\begin{equation}\label{eq:ItoFormula} %for Zakai
% \begin{aligned}
%  \upsilon(\pi_{t}) =&\ \upsilon(\pi)
%  + \int_{0}^{t} \pi_{r}\left(
%  \bra \partial_{\mu}\upsilon(\pi_{r}),b \ket_{\mb{R}^{d}} \right) \d r
%  + \frac{1}{2} \int_{0}^{t} \pi_{r}\left\{
%  \mr{Trace}\left[ \partial_{x}\partial_{\mu}\upsilon(\pi_{r})a \right] \right\} \d r \\
%  +&\ \int_{0}^{t} \bra \pi_{r}\left( h D_{\mu}\upsilon(\pi_{r})
%  + \left[ \sigma^{2} \right]^{\top}\partial_{\mu}\upsilon(\pi_{r}) \right),
%  \d Y_{r} \ket_{\mb{R}^{q}} \\
%  +&\ \frac{1}{2} \int_{0}^{t} \pi_{r}\otimes\pi_{r}\left(
%  \bra D_{\mu\mu}^{2}\upsilon(\pi_{r})h,h \ket_{\mb{R}^{q}} \right) \d r \\
%  +&\ \int_{0}^{t} \pi_{r}\otimes\pi_{r}\left(
%  \bra \left[ \sigma^{2} \right]^{\top}D_{\mu}\partial_{\mu}\upsilon(\pi_{r}),h \ket_{\mb{R}^{q}} \right) \d r \\
%  +&\ \frac{1}{2} \int_{0}^{t} \pi_{r}\otimes\pi_{r}\left\{
%  \mr{Trace}\left( \partial_{\mu\mu}^{2}\upsilon(\pi_{r})\sigma^{2}\left[\sigma^{2}\right]^{\top}
%  \right) \right\} \d r,
% \end{aligned}
%\end{equation}

Now, we leverage the It\^{o} formula to characterize
the operator $\msc{A}^{\gamma}$ appearing in \Cref{eq:HJB}.
Let $\mc{D}(\msc{A}^{\gamma})$ be the set of all
$\upsilon\in\mc{C}_{L}^{2}(\mc{P}_{2}(\mb{R}^{d}))$ of cylindrical functions
$\upsilon(\mu) = \Upsilon(\mu(\phi^{1}),...,\mu(\phi^{n}))$
for some $n\in\mb{N}\backslash\{0\}$,
where $\Upsilon\in C_{b}^{\infty}(\mb{R}^{n})$
,$\phi^{1},...,\phi^{n}\in\mc{D}(\mb{R}^{d})$
and $\mu(\phi)$ denotes the integral of $\phi$ against the measure $\mu$ over $\mb{R}^{d}$.
For $\upsilon\in\mc{D}(\msc{A}^{\gamma})$, it holds that
\begin{align*} % \label{eq:generator}
   (\msc{A}^{\gamma}\upsilon)(\mu)
   =&\ \frac{1}{2} \sum_{i,j=1}^{n}
   \partial_{ij}^{2} \Upsilon(\mu(\phi^{1}),...,\mu(\phi^{n}))
   \left[ \mu(h\phi^{i} + \msc{M}^{}\phi^{i}) - \mu(h)\mu(\phi^{i}) \right] \\
   &\ \cdot \left[ \mu(h\phi^{j} + \msc{M}^{}\phi^{j}) - \mu(h)\mu(\phi^{j}) \right]
   + \sum_{i=1}^{n}
   \partial_{i} \Upsilon(\mu(\phi^{1}),...,\mu(\phi^{n}))\mu(\msc{L}^{\gamma}\phi^{i}) \\
   = &\ \frac{1}{2} \mu\otimes\mu\left(
   \bra h + \msc{M}^{} - \mu(h),h + \msc{M}^{} - \mu(h) \ket_{\mb{R}^{q}}
   \frac{\delta^{2}\upsilon(\mu)}{\delta\mu^{2}} \right)
   + \mu\left( \msc{L}^{\gamma}\frac{\delta\upsilon(\mu)}{\delta\mu} \right).
\end{align*}
Please refer to \cite{Fleming1982}
for some extra descriptions of the operator $\msc{A}^{\gamma}$
and its corresponding nonlinear semigroup.

We state the dynamic programming principle (DPP)
without proof in the following theorem.
The DPP has been demonstrated in an extremely broad setting by \cite{DPT2022}
thanks to the measurable selection theorems 
(the proof strategy employed there is also applicable to this paper).

\begin{theorem}[DPP, Infinite Horizon Case]
Suppose that \Cref{hyp:BC,hyp:Lip1} hold.
For every $\tau>0$ and $\mu\in\mc{P}_{2}(\mb{R}^{d})$,
\begin{equation*}
 \begin{aligned}
   u(\mu) = \inf_{\gamma_{\cdot}\in\bm{\Gamma}} \mb{E}^{} \left[
   \int_{0}^{\tau} e^{-r} L(\pi_{r},\gamma_{r}) \d r
   + e^{-\tau} u(\pi_{\tau}) \right].
 \end{aligned}
\end{equation*}
\end{theorem}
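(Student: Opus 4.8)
The plan is to prove the dynamic programming principle by the now-standard two-sided inequality argument, adapted to the infinite-horizon discounted setting. Throughout, fix $\tau>0$ and $\mu\in\mc{P}_2(\Rd)$. Let $w(\mu)$ denote the right-hand side, i.e. $w(\mu):=\inf_{\gamma_\cdot\in\bm\Gamma}\mb{E}[\int_0^\tau e^{-r}L(\pi_r,\gamma_r)\,\d r+e^{-\tau}u(\pi_\tau)]$, and show $u(\mu)\le w(\mu)$ and $u(\mu)\ge w(\mu)$ separately.

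For the inequality $u(\mu)\le w(\mu)$: given any admissible $\gamma_\cdot\in\bm\Gamma$ on $[0,\tau]$ and any $\varepsilon>0$, use the definition of the value function $u(\pi_\tau)$ together with a measurable selection theorem to pick, for (almost) every realization of $\pi_\tau$, an $\varepsilon$-optimal continuation control $\tilde\gamma_\cdot$ adapted to the observation filtration shifted to time $\tau$; concatenating $\gamma$ on $[0,\tau)$ with $\tilde\gamma$ on $[\tau,\infty)$ yields an admissible control whose cost, by the Markov property of the K-S filter $\pi_t$ and the tower property of conditional expectation, is at most $\mb{E}[\int_0^\tau e^{-r}L(\pi_r,\gamma_r)\,\d r+e^{-\tau}u(\pi_\tau)]+e^{-\tau}\varepsilon$. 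Taking the infimum over $\gamma_\cdot$ and then $\varepsilon\downarrow0$ gives $u(\mu)\le w(\mu)$. Here one must invoke that $\pi_t$ is itself a (time-homogeneous) Markov process on $\mc{P}(\Rd)$ — which follows from the uniqueness of strong solutions to \Cref{eq:Kushner} guaranteed under \Cref{hyp:BC} — and use \Cref{hyp:Lip1} to ensure all the integrals are finite and the concatenated strategy is genuinely admissible (i.e. $\mc{F}^\beta_t$-progressively measurable after shifting).

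For the reverse inequality $u(\mu)\ge w(\mu)$: given $\varepsilon>0$, pick an $\varepsilon$-optimal control $\gamma^\varepsilon_\cdot\in\bm\Gamma$ for $u(\mu)$. Split the cost functional at time $\tau$: $\mc{J}(\mu;\gamma^\varepsilon_\cdot)=\mb{E}[\int_0^\tau e^{-r}L(\pi_r,\gamma^\varepsilon_r)\,\d r]+e^{-\tau}\mb{E}[\int_0^\infty e^{-s}L(\pi_{\tau+s},\gamma^\varepsilon_{\tau+s})\,\d s]$. Conditioning on $\mc{F}^\beta_\tau$ and using the Markov property, the inner conditional expectation of the tail cost is an admissible cost for the control problem started from $\pi_\tau$, hence bounded below by $u(\pi_\tau)$; therefore $u(\mu)+\varepsilon\ge\mc{J}(\mu;\gamma^\varepsilon_\cdot)\ge\mb{E}[\int_0^\tau e^{-r}L(\pi_r,\gamma^\varepsilon_r)\,\d r+e^{-\tau}u(\pi_\tau)]\ge w(\mu)$, and letting $\varepsilon\downarrow0$ finishes. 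Both directions also need the elementary observation that $u$ is bounded (an immediate consequence of \Cref{hyp:Lip1}, which forces $L$ to be bounded on bounded $\mc{W}_1$-sets together with the contraction property of the filter, or is assumed implicitly), so that $e^{-\tau}u(\pi_\tau)$ is integrable.

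The main obstacle is the measurable-selection and admissibility bookkeeping in the first inequality: one must verify that the $\varepsilon$-optimal continuation control can be chosen measurably in the "initial condition" $\pi_\tau$ (which lives in the infinite-dimensional space $\mc{P}_2(\Rd)$), and that the shifted Wiener process $\beta_{\tau+\cdot}-\beta_\tau$ generates the correct filtration so that the concatenated control is admissible in the sense of $\bm\Gamma$. This is precisely the technical content that \cite{DPT2022} handles via measurable selection theorems in a very general framework, and as the excerpt notes, that proof strategy transfers verbatim; so in the write-up I would state the selection lemma, cite \cite{DPT2022}, and otherwise only sketch the Markov-property and tower-property manipulations, which are routine once the filter's strong Markov property is in hand.
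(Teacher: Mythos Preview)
Your proposal is correct and aligns with the paper's treatment: the paper does not supply its own proof of the DPP but simply states the result and defers to \cite{DPT2022}, noting that the measurable-selection machinery there carries over verbatim. Your sketch of the two-sided inequality, the Markov property of the K--S filter, and the measurable-selection step for $\varepsilon$-optimal continuations is exactly the content of that cited strategy, so you have in fact written out more than the paper does.
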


The next result is the HJB equation,
which can be derived by combining the It\^{o} formula with DPP.

\begin{proposition}
Suppose that \Cref{hyp:BC,hyp:Lip1} hold.
Let the value function $u$ be in $\mc{C}_{L}^{2}(\mc{P}_{2}(\mb{R}^{d}))$.
Then $u$ satisfies the following second-order HJB equation:
\begin{equation}\label{eq:HJB-1}
 \begin{aligned}
    F \left(\mu,u,\partial_{\mu}u,\partial_{x}\partial_{\mu}u,\frac{\delta^{2}u}{\delta\mu^{2}},
    \partial_{x}\frac{\delta^{2}u}{\delta\mu^{2}},\partial_{\mu\mu}^{2}u\right)
    \triangleq &\ u(\mu) - \inf_{\gamma\in\Gamma} \left\{
    L(\mu,\gamma) + \msc{A}^{\gamma}u(\mu) \right\} \\
    =&\ 0~~\text{in}~~\mc{P}_{2}(\mb{R}^{d}).
 \end{aligned}
\end{equation}
\end{proposition}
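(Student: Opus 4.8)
\emph{Proof plan.} The plan is the classical two-sided derivation of the HJB equation from the dynamic programming principle (DPP) together with the It\^{o} formula. First I extend the operator $\msc{A}^{\gamma}$ from the cylindrical functions $\mc{D}(\msc{A}^{\gamma})$ to all of $\mc{C}_{L}^{2}(\mc{P}_{2}(\mb{R}^{d}))$ through the identity
\[
(\msc{A}^{\gamma}\upsilon)(\mu) = \tfrac{1}{2}\,\mu\otimes\mu\big(\langle h+\msc{M}-\mu(h),\,h+\msc{M}-\mu(h)\rangle_{\mb{R}^{q}}\,\tfrac{\delta^{2}\upsilon(\mu)}{\delta\mu^{2}}\big) + \mu\big(\msc{L}^{\gamma}\tfrac{\delta\upsilon(\mu)}{\delta\mu}\big),
\]
which is exactly the $\d t$-coefficient in the It\^{o} formula and, after expanding $\msc{M}$, reproduces the variational and Lions derivative terms appearing in \Cref{eq:HJB}. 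Fix $\mu\in\mc{P}_{2}(\mb{R}^{d})$. It suffices to establish that $u(\mu)-\inf_{\gamma\in\Gamma}\{L(\mu,\gamma)+\msc{A}^{\gamma}u(\mu)\}$ is simultaneously $\le 0$ and $\ge 0$.

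\emph{Supersolution inequality ($\le 0$).} Fix a constant control $\gamma\in\Gamma$ and let $\pi_{t}$ solve the K-S equation with $\pi_{0}\sim\mu$. The DPP over $[0,\tau]$ reads $u(\mu)\le\mb{E}[\int_{0}^{\tau}e^{-r}L(\pi_{r},\gamma)\,\d r+e^{-\tau}u(\pi_{\tau})]$. Apply the It\^{o} formula to $t\mapsto e^{-t}u(\pi_{t})$; the stochastic integral is a true martingale since $h$ and $\sigma^{2}$ are bounded by \Cref{hyp:BC} and the first-order derivatives of $u\in\mc{C}_{L}^{2}$ are controlled (localize with stopping times if they are not globally bounded). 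Taking expectations,
\[
\mb{E}[e^{-\tau}u(\pi_{\tau})]-u(\mu)=\mb{E}\!\left[\int_{0}^{\tau}e^{-r}\big(\msc{A}^{\gamma}u(\pi_{r})-u(\pi_{r})\big)\,\d r\right].
\]
Substituting into the DPP inequality, dividing by $\tau$, and letting $\tau\downarrow0$ gives $0\le L(\mu,\gamma)+\msc{A}^{\gamma}u(\mu)-u(\mu)$; here I use that $\pi_{r}\to\mu$ in $\mc{W}_{1}$ in $L^{1}(\Omega)$ as $r\downarrow0$ (a moment estimate for the K-S equation under \Cref{hyp:BC}), the $\mc{W}_{1}$-continuity of $L(\cdot,\gamma)$ from \Cref{hyp:Lip1}, and the continuity in $\mu$ of $u$ and $\msc{A}^{\gamma}u$. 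Taking the infimum over $\gamma\in\Gamma$ yields the claim.

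\emph{Subsolution inequality ($\ge 0$).} Fix $\epsilon>0$ and $\tau>0$, and use the DPP to pick $\gamma^{\epsilon}_{\cdot}\in\bm{\Gamma}$ with $u(\mu)+\epsilon\tau\ge\mb{E}[\int_{0}^{\tau}e^{-r}L(\pi^{\epsilon}_{r},\gamma^{\epsilon}_{r})\,\d r+e^{-\tau}u(\pi^{\epsilon}_{\tau})]$, $\pi^{\epsilon}$ being the corresponding solution. The same It\^{o} computation yields $\epsilon\tau\ge\mb{E}[\int_{0}^{\tau}e^{-r}(L(\pi^{\epsilon}_{r},\gamma^{\epsilon}_{r})+\msc{A}^{\gamma^{\epsilon}_{r}}u(\pi^{\epsilon}_{r})-u(\pi^{\epsilon}_{r}))\,\d r]$, and bounding the integrand below by $\inf_{\gamma\in\Gamma}\{L(\pi^{\epsilon}_{r},\gamma)+\msc{A}^{\gamma}u(\pi^{\epsilon}_{r})\}-u(\pi^{\epsilon}_{r})$ and dividing by $\tau$, one obtains after $\tau\downarrow0$ that $\epsilon\ge\inf_{\gamma\in\Gamma}\{L(\mu,\gamma)+\msc{A}^{\gamma}u(\mu)\}-u(\mu)$, provided the convergence $\pi^{\epsilon}_{r}\to\mu$ and the relevant continuity in $\mu$ are \emph{uniform over admissible controls}. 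Letting $\epsilon\downarrow0$ finishes this inequality, and combining the two gives \Cref{eq:HJB-1}.

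\emph{Main obstacle.} The delicate step is the limit $\tau\downarrow0$ in the subsolution part: it requires a modulus-of-continuity estimate $\mb{E}[\mc{W}_{1}(\pi^{\epsilon}_{r},\mu)]\to0$ as $r\downarrow0$ at a rate independent of the admissible control, combined with continuity in $\mu$ of $L(\cdot,\gamma)$, $u$, and $\msc{A}^{\gamma}u$ that is uniform in $\gamma\in\Gamma$ (using the compactness of $\Gamma$); the last of these invokes the boundedness and continuity of the first- and second-order variational and Lions derivatives of $u$. A secondary point, the genuine (not merely local) martingale property of the stochastic integral in the It\^{o} formula, is handled by the boundedness in \Cref{hyp:BC} together with standard localization.
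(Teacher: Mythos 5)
Your derivation is correct and follows exactly the route the paper indicates (the paper states only that the proposition "can be derived by combining the It\^{o} formula with DPP" and gives no further detail), namely the classical two-sided argument: constant controls in the DPP plus It\^{o} for one inequality, $\epsilon$-optimal controls with a control-uniform small-time estimate for the other. The only quibble is cosmetic: your labels are swapped relative to the usual convention (the "$\le 0$" inequality is the subsolution property and "$\ge 0$" the supersolution property), which does not affect the argument since you prove both.
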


At the end of this section, we give the definition of viscosity solutions.

\begin{definition}[Viscosity Solution]\label{def:viscosity}
A function $\upsilon:\mc{P}_{2}(\mb{R}^{d})\ra\mb{R}^{1}$ is
a viscosity solution of \Cref{eq:HJB-1} if the following assertions are satisfied:
\begin{enumerate}[\rm \bfseries(I)]
\item
$\upsilon$ is a viscosity subsolution of \Cref{eq:HJB-1},
that is, for any $\phi\in\mc{C}_{L}^{2}(\mc{P}_{2}(\mb{R}^{d}))$
and any local maximum point $\overline{\mu}\in\mc{P}_{2}(\mb{R}^{d})$ of $\upsilon-\phi$,
%with $\mc{I}(\bar{\mu})<\infty$,
\begin{equation*}
 \begin{aligned}
   F \left(\bar{\mu},\upsilon(\bar{\mu}),\partial_{\mu}\phi(\bar{\mu}),
   \partial_{x}\partial_{\mu}\phi(\bar{\mu}),
   \frac{\delta^{2}\phi(\bar{\mu})}{\delta\mu^{2}},
   \partial_{x}\frac{\delta^{2}\phi(\bar{\mu})}{\delta\mu^{2}},
   \partial_{\mu\mu}^{2}\phi(\bar{\mu}) \right)
   \leq 0;
 \end{aligned}
\end{equation*}
\item
$\upsilon$ is a viscosity supsolution of \Cref{eq:HJB-1},
that is, for any $\phi\in\mc{C}_{L}^{2}(\mc{P}_{2}(\mb{R}^{d}))$
and any local minimum point $\underline{\mu}\in\mc{P}_{2}(\mb{R}^{d})$ of $\upsilon-\phi$,
%with $\mc{I}(\ubar{\mu})<\infty$,
\begin{equation*}
 \begin{aligned}
   F \left( \ubar{\mu},\upsilon(\ubar{\mu}),\partial_{\mu}\phi(\ubar{\mu}),
   \partial_{x}\partial_{\mu}\phi(\ubar{\mu}),
   \frac{\delta^{2}\phi(\ubar{\mu})}{\delta\mu^{2}},
   \partial_{x}\frac{\delta^{2}\phi(\ubar{\mu})}{\delta\mu^{2}},
   \partial_{\mu\mu}^{2}\phi(\ubar{\mu}) \right)
   \geq 0.
 \end{aligned}
\end{equation*}
\end{enumerate}
\end{definition}

\begin{remark}
The notion of viscosity solutions in the Wasserstein space
can be defined in various ways (sometimes not even equivalently).
For instance, the literature \cite{BCFP2019,CTQ2025,CGKPR2024,DS2024}
utilizes a family of \emph{smooth} test functions to define viscosity solutions
by \emph{touching} the value function from above and below,
while the literature \cite{BEZ2023,DJS2025} employs \emph{semi-jets} to define viscosity solutions.
Regardless, it is crucial that we identify the definition of viscosity solutions
in accordance with specific situations.
\end{remark}

%\begin{definition}[Elliptic Semijet]
%Let $\upsilon$ be an upper semi-continuous function.
%We define the second-order superjet $J^{2,+}\upsilon(\mu)$ of $\upsilon$ at $\mu$ by
%\begin{equation*}
% \begin{aligned}
%   J^{2,+}\upsilon(\mu) :=&\ \bigg\{
%   \left(\partial_{\mu}u,\partial_{x}\partial_{\mu}u,\frac{\delta^{2}u}{\delta\mu^{2}},
%   \partial_{x}\frac{\delta^{2}u}{\delta\mu^{2}},\partial_{\mu\mu}^{2}u \right)
%   \in\mb{R}^{d}\times\mb{R}^{d\times d}\times\mb{R}^{1}\times\mb{R}^{d}\times\mb{R}^{d\times d}: \\
%   &\ ~~ \text{$\phi\in\mc{C}_{L}^{2}(\mc{P}(\mb{R}^{d}))$,
%   $\mu\in\mc{P}(\mb{R}^{d})$ is a local maximum of $\upsilon-\phi$}
%   \bigg\}.
% \end{aligned}
%\end{equation*}
%Similarly, we define subjet $J^{2,-}\upsilon(\mu) := -J^{2,+}(-\upsilon)(\mu)$.
%We also define the closure of jets as
%\begin{equation*}
% \begin{aligned}
%   \bar{J}^{2,\pm}\upsilon(\mu) :=&\ \Big\{
%   \left(f,g,X,Y,Z \right): \exists\mu_{n}\rightharpoonup\mu,
%   \exists\left(f_{n},g_{n},X_{n},Y_{n},Z_{n} \right)\in J^{2,\pm}\upsilon(\mu_{n}), \\
%   %\in\mb{R}^{d}\times\mb{R}^{d\times d}\times\mb{R}^{1}\times\mb{R}^{d\times d}: \\
%   &\ ~~ \left( \upsilon(\mu_{n}),f_{n},g_{n},X_{n},Y_{n},Z_{n} \right)
%   \ra\left( \upsilon(\mu),f,g,X,Y,Z \right)
%   \Big\}.
% \end{aligned}
%\end{equation*}
%\end{definition}

\section{Entropy penalization}
\label{sec:entropy}

As in the previous section, we begin by introducing several critical tools.

\begin{definition}\label{def:entropy}
The (differential) \textbf{entropy} is defined as
\begin{equation*}
 \mc{E}(\mu) =
  \begin{dcases}
	\int_{\mb{R}^{d}} \log(\mu(x)) \d \mu(x) & \text{if $\mu\in\mc{P}_{\mr{ac},2}(\mb{R}^{d})$},\\
	\infty & \text{otherwise.}
  \end{dcases}
\end{equation*}
\end{definition}
It is also convenient to introduce the nonnegative version of the entropy
$\tilde{\mc{E}}(\mu):=\mc{E}(\mu) + \pi\int_{\mb{R}^{d}} |x|^2 \d \mu(x) \geq 0$.
Direct calculation shows the following inequality:
\begin{align}\label{eq:entropy-bound}
  \frac{\pi}{2} \int_{\Rd} |x|^{2}
  \d \mu(x) - \frac{d}{2}\log2
  \leq \tilde{\mc{E}}(\mu).
\end{align}

\begin{definition}
The \textbf{Fisher information} is defined as
\begin{equation*}
 \mc{I}(\mu) =
  \begin{dcases}
	4 \int_{\mb{R}^{d}} \left| \partial_{x} \sqrt{\mu}(x) \right|^{2} \d x
    & \text{if $\mu\in\mc{P}_{\mr{ac},2}(\mb{R}^{d})$ and $\sqrt{\mu}\in H^{1}(\Rd)$}, \\
	\infty & \text{otherwise.}
  \end{dcases}
\end{equation*}
\end{definition}

\begin{remark}
Fisher information can also be expressed in the following forms
\begin{align*}
 \mc{I}(\mu) =&\ 4 \int_{\mb{R}^{d}} \left| \partial_{x} \sqrt{\mu}(x) \right|^{2} \d x
  = \int_{\Rd} \left| \partial_{x}\log(\mu(x)) \right|^{2} \d \mu(x) \\
  =&\ \int_{\Rd} \frac{|\partial_{x}\mu(x)|^{2}}{\mu(x)} \d x
  = - \mr{Trace} \int_{\Rd} \partial_{xx}^{2}(\log(\mu(x))) \d \mu(x),
\end{align*}
with all of these identities being equivalent under appropriate regularity assumptions.
Note that $\mc{I}(\mu)$ is finite if and only if $\partial_{x}\log\mu(x)\in L^{2}(\mu;\Rd)$
(see \cite[Lemma D.7]{FK2006}).
\end{remark}

\begin{remark}\label{rmk:entropy-Fisher}
When selecting the Gaussian measure on $\Rd$ as the reference measure,
it is well-known that the \emph{Logarithmic-Sobolev inequality} builds a link
between entropy and Fisher information (cf. \cite[Proposition 15.23]{ABS2021}).
In this article, we will leverage the following fact (see \cite[Lemma 2.9]{DS2024}):
Assuming $\mu\in\mc{P}_{\mr{ac},2}(\Rd)$ and $\mc{I}(\mu)<\infty$,
then $|\mc{E}(\mu)|<\infty$.
\end{remark}

\begin{definition}
We denoted by $\mu*\nu$ the \textbf{convolution of measures}
$\mu,\nu\in\mc{P}_{2}(\mb{R}^{d})$ defined via
\begin{equation*}
 \begin{aligned}
   \int_{\mb{R}^{d}} \phi(x) \d (\mu*\nu)(x) :=
   \iint_{\mb{R}^{2d}} \phi(x+y)\d \mu(x) \d \nu(y)
 \end{aligned}
\end{equation*}
for all bounded measurable function $\phi$.
Moreover, if measures $\mu$ and $\nu$ have densities $\mu(x)$ and $\nu(x)$, then we have
\begin{equation*}
 \begin{aligned}
   (\mu*\nu)(x) = \int_{\mb{R}^{d}} \mu(x-y)\nu(y) \d y
   = \int_{\mb{R}^{d}} \mu(y)\nu(x-y) \d y.
 \end{aligned}
\end{equation*}
\end{definition}

Subsequently, we go over the concept of gauge-type functions.

\begin{definition}\label{def:gauge}
Let $\mr{dist}$ be a metric on $\mc{P}_{2}(\mb{R}^{d})$ such that
$(\mc{P}_{2}(\mb{R}^{d}),\mr{dist})$ is complete.
A map $\mc{G}:\mc{P}_{2}(\mb{R}^{d})\times\mc{P}_{2}(\mb{R}^{d})\ra[0,\infty)$
is said to be a \textbf{gauge-type function} on the complete metric space
$(\mc{P}_{2}(\mb{R}^{d}),\mr{dist})$ provided that:
\begin{enumerate}[\rm \bfseries(i)]
\item
$\mc{G}(\mu,\mu)=0$, for every $\mu\in\mc{P}_{2}(\mb{R}^{d})$;
\item
$\mc{G}$ is continuous on $\mc{P}_{2}(\mb{R}^{d})\times\mc{P}_{2}(\mb{R}^{d})$;
%lower semi-continuous
\item
for all $\varepsilon>0$, there exists $\eta_{\epsilon}>0$ such that,
for each $(\mu,\nu)\in\mc{P}_{2}(\mb{R}^{d})\times\mc{P}_{2}(\mb{R}^{d})$,
we have $\mc{G}(\mu,\nu)\leq\eta_{\epsilon}$ implies $\mr{dist}(\mu,\nu)\leq\varepsilon$.
\end{enumerate}
\end{definition}

%In contrast to the definition of gauge-type functions in typical references
%(see \cite[Definition 2.5.1]{BZ2005}),
%the aforementioned definition merely necessitates the function
%to be lower semi-continuous rather than continuous.

In order to generate a gauge-type function on $\mc{P}_{2}(\Rd)$,
we select a special metric on $\mc{P}_{2}(\Rd)$,
which is the Gaussian regularized 2-Wasserstein distance.
For any $\mu,\nu\in\mc{P}_{2}(\mb{R}^{d})$,
the Gaussian regularized 2-Wasserstein distance is defined as
\begin{equation*}
 \begin{aligned}
   \mc{W}_{2}^{\sigma}(\mu,\nu) :=
   \mc{W}_{2}(\mu*\mc{N}_{\sigma},\nu*\mc{N}_{\sigma}),
 \end{aligned}
\end{equation*}
where $\mc{N}_{\sigma}:=\mc{N}(0,\sigma^2I_{d})\in\mc{P}_{2}(\mb{R}^{d})$
is the normal distribution with variance matrix $\sigma^2I_{d}$ for some $\sigma>0$.
Let $\varphi_{\sigma}(x)=
(2\pi\sigma^{2})^{-\frac{d}{2}}\exp\left( -\frac{|x|^{2}}{2\sigma^{2}} \right)$
denote the density function of $\mc{N}_{\sigma}$,
and then $(\mu*\mc{N}_{\sigma})(z)=\int_{\Rd} \varphi_{\sigma}(z-x)\d \mu (x)$.
For any $\sigma>0$, $(\mc{P}_{2}(\mb{R}^{d}),\mc{W}_{2}^{\sigma})$ is a complete space
and it induces the same topology as $(\mc{P}_{2}(\mb{R}^{d}),\mc{W}_{2})$
(cf. \cite[Lemma 4.2]{CGKPR2024}).
Fix $\sigma>0$ and let $B_{0}=(-1,1]^{d}$.
For $l\geq 0$, $\wp_{l}$ denotes the partition of $B_{0}$ into
$2^{dl}$ translations of $(-2^{-l},2^{-l}]^{d}$
and for every $n\geq 1$, $B_{n}=(-2^{n},2^{n}]^{d}\backslash(-2^{n-1},2^{n-1}]^{d}$.
Let $\theta_{n,l}=2^{-(4n+2dl)}$.
For convenience, we introduce two notations:
\begin{align}
  &\ a_{n,l}(\mu,\nu,B) :=
  \left| \left( \mu*\mc{N}_{\sigma}-\nu*\mc{N}_{\sigma} \right)
  \left( (2^{n}B)\cap B_{n} \right) \right|, \notag \\ % \label{eq:Shorthand-1} \\
  &\ b_{n,l}(\mu,\nu,B) :=  \sqrt{|a_{n,l}(\mu,\nu,B)|^{2}
  + 2 \theta_{n,l} \sqrt{1-\theta_{n,l}} a_{n,l}(\mu,\nu,B) + \theta_{n,l}^{2}}
  - \theta_{n,l}, \label{eq:Shorthand-2}
\end{align}
where $2^{n}B:=\{ 2^{n}x\in\Rd:x\in B \}$.
Then, we define
\begin{equation}\label{eq:gauge}
 \begin{aligned}
   \mc{G}(\mu,\nu) :=
   \sum_{n\geq 0} 2^{2n} \sum_{l\geq 0} 2^{-2l} \sum_{B\in \wp_{l}} b_{n,l}(\mu,\nu,B)
   \triangleq \sum_{n,l,B} 2^{2(n-l)} b_{n,l}(\mu,\nu,B).
 \end{aligned}
\end{equation}
The following result ensures that $\mc{G}$ is actually
a gauge-type function on $\mc{P}_{2}(\mb{R}^{d})$.

\begin{lemma} % \label{lma:gauge}
$\mc{G}(\mu,\nu)$ is a gauge-type function on $\mc{P}_{2}(\mb{R}^{d})$,
which is endowed with the metric $\mc{W}_{2}^{\sigma}$.
\end{lemma}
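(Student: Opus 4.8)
The plan is to verify the three defining properties of a gauge-type function from \Cref{def:gauge} for $\mc{G}$ given by \Cref{eq:gauge}, taking $\mr{dist}=\mc{W}_{2}^{\sigma}$. Before anything else, I would record the elementary scalar estimate: the function $t\mapsto\sqrt{t^{2}+2\theta\sqrt{1-\theta}\,t+\theta^{2}}-\theta$ is nonnegative, vanishes at $t=0$, is globally Lipschitz in $t$ with constant $1$, and is bounded above by $|t|$ for $t\geq0$ (one checks $\sqrt{t^{2}+2\theta\sqrt{1-\theta}\,t+\theta^{2}}\le t+\theta$ since $2\theta\sqrt{1-\theta}\,t\le 2\theta t$). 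Consequently each term obeys $0\le b_{n,l}(\mu,\nu,B)\le a_{n,l}(\mu,\nu,B)\le (\mu*\mc{N}_\sigma+\nu*\mc{N}_\sigma)\big((2^nB)\cap B_n\big)$. Summing over $B\in\wp_l$ for fixed $l$ (the sets $(2^nB)\cap B_n$ partition $B_n$) gives $\sum_{B\in\wp_l}b_{n,l}\le (\mu*\mc{N}_\sigma+\nu*\mc{N}_\sigma)(B_n)$, and then $\sum_n 2^{2n}\sum_l 2^{-2l}\sum_B b_{n,l}\le \big(\sum_l 2^{-2l}\big)\sum_n 2^{2n}(\mu*\mc{N}_\sigma+\nu*\mc{N}_\sigma)(B_n)$. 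The factor $\sum_l 2^{-2l}$ converges, and $\sum_n 2^{2n}(\mu*\mc{N}_\sigma+\nu*\mc{N}_\sigma)(B_n)\lesssim 1+\int|x|^2\,\d(\mu*\mc{N}_\sigma)+\int|x|^2\,\d(\nu*\mc{N}_\sigma)<\infty$ because $B_n\subset\{|x|>2^{n-1}\sqrt d\}$ up to constants and $\mu,\nu\in\mc{P}_2$, $\mc{N}_\sigma\in\mc{P}_2$ imply the convolutions have finite second moment. Hence the series defining $\mc{G}$ converges and $\mc{G}$ is finite-valued.

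Property (i) is immediate: if $\mu=\nu$ then every $a_{n,l}(\mu,\nu,B)=0$, hence every $b_{n,l}=0$, so $\mc{G}(\mu,\mu)=0$. For property (ii), continuity on $\mc{P}_2\times\mc{P}_2$ (with the $\mc{W}_2^\sigma$ topology, equivalently the $\mc{W}_2$ topology by \cite[Lemma 4.2]{CGKPR2024}), I would argue by a uniform-tail / dominated-convergence scheme: the $1$-Lipschitz dependence of $b_{n,l}$ on $a_{n,l}$ plus the $1$-Lipschitz dependence of $a_{n,l}$ on the pair through the total variation of $(\mu-\nu)*\mc{N}_\sigma$ restricted to a fixed set shows each individual summand is continuous, while the tail bound just established (which depends only on a second-moment bound, uniform on $\mc{W}_2$-bounded sets) lets one truncate the double series uniformly; a finite sum of continuous functions is continuous, so $\mc{G}$ is continuous.

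The main obstacle is property (iii): showing $\mc{G}(\mu,\nu)\le\eta_\varepsilon$ forces $\mc{W}_2^\sigma(\mu,\nu)\le\varepsilon$. The idea is that $\mc{G}$ controls a weighted sum of absolute masses of the signed measure $(\mu-\nu)*\mc{N}_\sigma$ on a doubly-indexed dyadic family (dyadic cubes at all scales, inside dyadic annuli $B_n$), with weights $2^{2(n-l)}$; since the regularized densities $\mu*\mc{N}_\sigma$ and $\nu*\mc{N}_\sigma$ are smooth with Gaussian-type control on derivatives, smallness of all these dyadic masses propagates to smallness of a negative-order Sobolev / bounded-Lipschitz-type norm of $(\mu*\mc{N}_\sigma-\nu*\mc{N}_\sigma)$, which in turn metrizes weak convergence; combined with the uniform second-moment control (the tail estimate above, forcing tightness) this upgrades to $\mc{W}_2$-closeness of $\mu*\mc{N}_\sigma$ and $\nu*\mc{N}_\sigma$, i.e. to $\mc{W}_2^\sigma(\mu,\nu)\le\varepsilon$. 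Concretely I would fix $\varepsilon$, choose $N$ large so the annuli beyond $B_N$ contribute less than $\varepsilon/3$ to $\mc{W}_2^\sigma$ (using finite second moments), choose $L$ large so that on the compact region $\bigcup_{n\le N}\overline{B_n}$ the dyadic partition $\wp_L$ is fine enough that oscillation of Lipschitz test functions over each cube is $<\varepsilon/3$, and then pick $\eta_\varepsilon$ so small that $\mc{G}(\mu,\nu)\le\eta_\varepsilon$ forces $|(\mu-\nu)*\mc{N}_\sigma|$ on each of the finitely many cubes $\{(2^nB)\cap B_n:n\le N,\ B\in\wp_L\}$ to be $<\varepsilon/(3\cdot\#\text{cubes})$ — here one inverts the scalar relation $b\ge$ (something comparable to $\min(a,a^2/\theta)$) to pass from $b_{n,l}$ small back to $a_{n,l}$ small. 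Testing $\mu*\mc{N}_\sigma-\nu*\mc{N}_\sigma$ against an arbitrary $1$-Lipschitz function, approximated by a step function on this cube decomposition, then yields $\mc{W}_1(\mu*\mc{N}_\sigma,\nu*\mc{N}_\sigma)\le\varepsilon$; a standard interpolation between $\mc{W}_1$ and a uniform second-moment bound (tightness) gives the corresponding $\mc{W}_2$ bound, completing (iii). The bookkeeping of the three-part split and the inversion of the scalar function near $a=0$ are the delicate points; everything else is routine.
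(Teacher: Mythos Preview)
Your treatment of finiteness and of items~(i)--(ii) is fine and in fact more explicit than the paper, which simply cites \cite[Lemma~4.4]{CGKPR2024} for these. The difficulty is entirely in item~(iii), and there your outline has a genuine gap.

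The problem is uniformity. In \Cref{def:gauge}(iii) the threshold $\eta_\varepsilon$ must work for \emph{every} pair $(\mu,\nu)\in\mc{P}_2(\Rd)\times\mc{P}_2(\Rd)$, yet your truncation level $N$ is chosen ``using finite second moments'' of $\mu*\mc{N}_\sigma$ and $\nu*\mc{N}_\sigma$, quantities which are not controlled by $\mc{G}(\mu,\nu)$ (the gauge depends only on the \emph{difference} $\mu*\mc{N}_\sigma-\nu*\mc{N}_\sigma$). So $N$, and hence $\eta_\varepsilon$, would depend on the pair. Worse, the final ``standard interpolation between $\mc{W}_1$ and a uniform second-moment bound'' is simply false: take $\mu_k=(1-\tfrac1k)\delta_0+\tfrac1k\delta_{\sqrt k}$ and $\nu_k=(1-\tfrac1k)\delta_0+\tfrac1k\delta_{-\sqrt k}$; both have second moment equal to $1$, $\mc{W}_1(\mu_k,\nu_k)=2/\sqrt k\to0$, yet $\mc{W}_2(\mu_k,\nu_k)=\sqrt2$ for all $k$. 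No interpolation of the kind you invoke can hold.

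The paper avoids both issues by never localizing. It first inverts the scalar relation \emph{globally}: from $b_{n,l}=\sqrt{a_{n,l}^2+2\theta_{n,l}\sqrt{1-\theta_{n,l}}\,a_{n,l}+\theta_{n,l}^2}-\theta_{n,l}$ one gets $a_{n,l}\le b_{n,l}+\sqrt{2\theta_{n,l}b_{n,l}}$, and the specific decay $\theta_{n,l}=2^{-(4n+2dl)}$ is exactly what makes $\sum_{n,l,B}2^{2(n-l)}\sqrt{\theta_{n,l}b_{n,l}}$ summable (bounded by $4\sqrt{2\eta_\varepsilon}$) once $\mc{G}\le\eta_\varepsilon$. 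This yields control of the full weighted sum $\sum_{n,l,B}2^{2(n-l)}a_{n,l}$. Then, instead of passing through $\mc{W}_1$, the paper invokes the quantitative bound \eqref{eq:Wassertein-bound} from \cite[Lemma~5.12]{CD2018-1}, namely $\mc{W}_2^2\le C_d\sum_{n,l,B}2^{2(n-l)}a_{n,l}$, which directly delivers the $\mc{W}_2^\sigma$ estimate with a uniform $\eta_\varepsilon$. That inequality is the missing ingredient in your argument; without it (or an equivalent coupling construction, which is essentially what its proof provides), the route through $\mc{W}_1$ cannot close.
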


\begin{proof}
The proof of items (i) and (ii) in \Cref{def:gauge}
is identical to \cite[Lemma 4.4]{CGKPR2024}.
So we only need to explain item (iii) in \Cref{def:gauge}.

Our objective is to show the following:
for all $\epsilon>0$, there exists $\eta_{\epsilon}>0$ such that,
for each $(\mu,\nu)\in\mc{P}_{2}(\mb{R}^{d})\times\mc{P}_{2}(\mb{R}^{d})$,
the inequality $\mc{G}(\mu,\nu)\leq\eta_{\epsilon}$ implies
$\sum_{n,l,B} 2^{2(n-l)} |a_{n,l}(\mu,\nu,B)|\leq\epsilon$.
Recalling \eqref{eq:Shorthand-2}, we obtain
\begin{align*}
  a_{n,l}(\mu,\nu,B) = &\
  \sqrt{|b_{n,l}(\mu,\nu,B)|^{2} + 2 \theta_{n,l} b_{n,l}(\mu,\nu,B)
  + \theta_{n,l}^{2}(1-\theta_{n,l})} - \theta_{n,l}\sqrt{1-\theta_{n,l}} \\
  \leq &\ b_{n,l}(\mu,\nu,B)
  + \sqrt{2 \theta_{n,l} b_{n,l}(\mu,\nu,B) + \theta_{n,l}^{2}(1-\theta_{n,l})}
  - \theta_{n,l}\sqrt{1-\theta_{n,l}} \\
  \leq &\ b_{n,l}(\mu,\nu,B) + \sqrt{2 \theta_{n,l} b_{n,l}(\mu,\nu,B)},
\end{align*}
where we have used the elementary inequality
$\sqrt{x+y}\leq\sqrt{x}+\sqrt{y}$, for every $x,y\geq 0$.
Since $2^{2(n-l)}b_{n,l}(\mu,\nu,B)\leq\mc{G}(\mu,\nu)\leq\eta_{\epsilon}$
and $\theta_{n,l}=2^{-(4n+2dl)}$, we get
\begin{align*}
  \sum_{n,l,B} 2^{2(n-l)} a_{n,l}(\mu,\nu,B) \leq &\
  \sum_{n,l,B} 2^{2(n-l)} b_{n,l}(\mu,\nu,B) +
  \sqrt{2\eta_{\epsilon}} \sum_{n,l,B} 2^{2(n-l)} 2^{-3n-(d-1)l} \\
  \leq &\ \eta_{\epsilon} +
  \sqrt{2\eta_{\epsilon}} \sum_{n\geq 0} \sum_{l\geq0} 2^{2n} 2^{-2l} 2^{dl} 2^{-3n-(d-1)l}
  = \eta_{\epsilon} + 4\sqrt{2\eta_{\epsilon}}.
\end{align*}
Actually, for every $\mu,\nu\in\mc{P}_{2}(\Rd)$, the following inequality holds
\begin{align}\label{eq:Wassertein-bound}
  \left| \mc{W}_{2}(\mu,\nu) \right|^{2} \leq C_{d}
  \sum_{n,l,B} 2^{2(n-l)} \left| \mu((2^{n}B)\cap B_{n}) - \nu((2^{n}B)\cap B_{n}) \right|,
\end{align}
where $C_{d}$ is a constant depending only on $d$.
Inequality \eqref{eq:Wassertein-bound} follows from \cite[Lemma 5.12]{CD2018-1},
which gives an upper bound of the distance $\mc{W}_{2}$.
It follows from \eqref{eq:Wassertein-bound} that
$\mc{W}_{2}^{\sigma}(\mu,\nu)\leq\sqrt{C_{d}\epsilon}$
provided that $\mc{G}(\mu,\nu)\leq\eta_{\epsilon}=(\sqrt{8+\epsilon}-2\sqrt{2})^{2}$.

Thus, the proof of this lemma is finalized.
\end{proof}

The subsequent corollary provides crucial properties of the gauge-type function $\mc{G}$,
which will be applied in the next section to prove the comparison theorem.

\begin{corollary}\label{cor:gauge}
For every $\mu,\nu\in\mc{P}_{2}(\Rd)$
and fixed $\phi\in C_b(\Rd)$,
the following inequality holds:
\begin{align*}
  \left| \int_{\Rd} |x|^{2} \d \left( \mu*\varphi_{\sigma} - \nu*\varphi_{\sigma} \right)(x) \right|
  + \left| \int_{\Rd} \phi(x) \d \left( \mu*\varphi_{\sigma} - \nu*\varphi_{\sigma} \right)(x) \right|
  \leq C_{d} \mc{G}(\mu,\nu),
\end{align*}
where $C_{d}$ is a constant depending only on $d$.
\end{corollary}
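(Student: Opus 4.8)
The plan is to reduce everything to testing the signed density $\rho:=\mu*\varphi_\sigma-\nu*\varphi_\sigma$ against two functions. Since $\mu*\varphi_\sigma$ and $\nu*\varphi_\sigma$ are probability densities, $\rho\in L^1(\Rd)$ with $\int_{\Rd}\rho\,\d x=0$; and since $\varphi_\sigma$ is centred with covariance $\sigma^2I_d$, one has $\int_{\Rd}|x|^2\,\d(\mu*\varphi_\sigma)=\int_{\Rd}|x|^2\,\d\mu+d\sigma^2$, so $\int_{\Rd}|x|^2\,\d\rho=\int_{\Rd}|x|^2\,\d\mu-\int_{\Rd}|x|^2\,\d\nu$ and, in particular, $\int_{\Rd}|x|^2\,\d|\rho|<\infty$. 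Thus it suffices to bound $\bigl|\int_{\Rd}f\,\d\rho\bigr|$ by a multiple of $\mc{G}(\mu,\nu)$ for $f=|x|^2$ and for $f=\phi$, and then to add.

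The engine is the dyadic geometry already built into $\mc{G}$. I first record the two–sided bound $\tfrac1{\sqrt2}\,a_{n,l}(\mu,\nu,B)\le b_{n,l}(\mu,\nu,B)\le a_{n,l}(\mu,\nu,B)$: the right inequality is $\sqrt{t^2+2\theta\sqrt{1-\theta}\,t+\theta^2}\le t+\theta$, and the left one uses $\theta_{n,l}=2^{-(4n+2dl)}\le\tfrac12$, hence $\sqrt{1-\theta_{n,l}}\ge\tfrac1{\sqrt2}$. Consequently $\mc{G}(\mu,\nu)$ is comparable to $\sum_{n,l,B}2^{2(n-l)}\,\bigl|\rho\bigl((2^nB)\cap B_n\bigr)\bigr|$. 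From this, by choosing at each annulus $B_n$ a level $l$ fine enough that the cubes $(2^nB)\cap B_n$ are smaller than $\sigma$ — so that $\rho$, being $\varphi_\sigma$–mollified, does not change sign inside them and $\sum_{B\in\wp_l}\bigl|\rho((2^nB)\cap B_n)\bigr|\asymp\int_{B_n}|\rho|\,\d x$ — I obtain the a priori bound $\|\rho\|_{L^1(\Rd)}\le C_{d,\sigma}\,\mc{G}(\mu,\nu)$; this already settles the $\phi$–term, since $\bigl|\int_{\Rd}\phi\,\d\rho\bigr|\le\|\phi\|_\infty\|\rho\|_{L^1(\Rd)}\le C_{d,\sigma}\|\phi\|_\infty\,\mc{G}(\mu,\nu)$. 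Singling out instead the slice $l=0$ (where $\wp_0=\{B_0\}$ and $2^nB_0\supset B_n$) gives the complementary estimate $\sum_{n\ge0}2^{2n}\,|\rho(B_n)|\le\sqrt2\,\mc{G}(\mu,\nu)$.

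For $f=|x|^2$ I split $\Rd=\bigsqcup_{n\ge0}B_n$ and on each annulus approximate $|x|^2$ by the step function $\widehat f_{n,l}$ that is constant ($=|p_{n,l,B}|^2$ for a chosen point $p_{n,l,B}\in Q_{n,l,B}$) on each cube $Q_{n,l,B}:=(2^nB)\cap B_n$, so that $\int_{B_n}|x|^2\,\d\rho$ equals the ``main term'' $\sum_{B\in\wp_l}|p_{n,l,B}|^2\,\rho(Q_{n,l,B})$ plus the telescoping ``error'' $\sum_{l\ge0}\int_{B_n}(\widehat f_{n,l+1}-\widehat f_{n,l})\,\d\rho$. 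The main term with $l=0$ is $\le d\,4^n|\rho(B_n)|$, and summing over $n$ yields $d\sqrt2\,\mc{G}(\mu,\nu)$ by the previous paragraph. The real difficulty is the error term: the oscillation of $|x|^2$ on $Q_{n,l,B}$ (diameter $\asymp2^{n-l}$, gradient $\asymp2^n$) is $O_d(4^n2^{-l})$, so the level-$l$ increment is $\lesssim 4^n2^{-l}\sum_{B\in\wp_{l+1}}a_{n,l+1}$, which decays only like $2^{-l}$, whereas $\mc{G}$ weights level-$l$ data by $2^{-2l}$. I would close this gap by truncating the refinement at a level $L_n$ chosen annulus by annulus: for $l\le L_n$, writing $2^{-l}=2^{l}\cdot2^{-2l}$ and paying a factor $2^{L_n}$ reduces the sum to the $2^{-2l}$–weighted quantity controlled by $\mc{G}$; for $l>L_n$, the crude bound $\sum_{B\in\wp_l}a_{n,l}\le\|\rho\|_{L^1(\Rd)}$ against the summable tail $\sum_{l>L_n}2^{-l}$ suffices; and one optimizes $L_n$, using $\int_{\Rd}|x|^2\,\d|\rho|<\infty$ (from the moment identity) to dominate the residual. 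I expect this balancing of scales in the error term — not the main term — to be the crux; the remaining steps are the geometric-sum bookkeeping and the elementary inequalities (e.g.\ $\sqrt{x+y}\le\sqrt x+\sqrt y$) already employed in the gauge-type lemma.
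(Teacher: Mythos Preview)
Your route is far more elaborate than the paper's. The paper never tries to control $\|\rho\|_{L^1}$ or to telescope across dyadic levels; it works entirely at the annular scale. From $|x|^2\le d\cdot 4^n$ on $B_n$ (and $|\phi(x)|\le\|\phi\|_\infty\le\|\phi\|_\infty\cdot 4^n$ for $n\ge0$), both integrals are reduced to controlling $\sum_{n\ge0}4^n\,|\rho(B_n)|$, and this last quantity is bounded by $\mc{G}(\mu,\nu)$ using only the triangle inequality $\sum_{B\in\wp_l}|\rho((2^nB)\cap B_n)|\ge|\rho(B_n)|$ (the cubes $\{(2^nB)\cap B_n:B\in\wp_l\}$ partition $B_n$) together with the geometric sum $\sum_{l\ge0}2^{-2l}=4/3$. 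No $L^1$ bound on $\rho$, no ``no sign change'' heuristic, and no level-by-level optimization enter.

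Two steps in your proposal do not go through. First, the claim that ``$\rho$, being $\varphi_\sigma$-mollified, does not change sign inside'' cubes of side $\lesssim\sigma$ is false: smoothness of $\rho$ limits how fast it can oscillate but says nothing about where its zero set lies, so $\{\rho=0\}$ can meet cubes at every scale, and $\sum_{B\in\wp_l}|\rho((2^nB)\cap B_n)|$ need not be comparable to $\int_{B_n}|\rho|$ at any level $l$ chosen uniformly in $\mu,\nu$; the bound $\|\rho\|_{L^1}\le C_{d,\sigma}\,\mc{G}(\mu,\nu)$ is therefore unsupported. Second, the mismatch you flag between the $2^{-l}$ decay of your telescoping error and the $2^{-2l}$ weights in $\mc{G}$ is real, and the truncate-and-optimize fix does not close it: the tail beyond $L_n$ contributes $\sim 4^n2^{-L_n}\|\rho\|_{L^1}$, which after summing in $n$ forces $L_n\gtrsim 2n$, making the prefactor $2^{L_n}$ on the low-level part blow up; invoking $\int|x|^2\,\d|\rho|<\infty$ cannot absorb the residual, since that quantity is not itself dominated by $\mc{G}$. (A minor side issue: $\theta_{0,0}=2^{0}=1$, so your claim $\theta_{n,l}\le\tfrac12$ fails at $(n,l)=(0,0)$ and the lower bound $b\ge a/\sqrt2$ needs a separate, easy, treatment there.)
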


\begin{proof}
%Let us begin by addressing the first term on the left-hand side of the above inequality.
Recalling that $B_{n}=(-2^{n},2^{n}]^{d}\backslash(-2^{n-1},2^{n-1}]^{d}$,
we have $\frac{|x|^{2}}{d}\leq2^{2n},\forall\,x\in B_{n}$.
Hence, we get
\begin{align*}
  &\ \mc{G}(\mu,\nu) =
  \sum_{n,l,B} 2^{2(n-l)} b_{n,l}(\mu,\nu,B)
  \geq \sum_{n,l,B} 2^{2(n-l)} a_{n,l}(\mu,\nu,B) \\
  = &\ \sum_{n\geq 0} 2^{2n} \sum_{l\geq 0} 2^{-2l}
  \left| \left(\mu*\mc{N}_{\sigma} - \nu*\mc{N}_{\sigma}\right)(B_{n}) \right|
  = \frac{4}{3} \sum_{n\geq 0} 2^{2n}
  \left| \left(\mu*\mc{N}_{\sigma} - \nu*\mc{N}_{\sigma}\right)(B_{n}) \right| \\
  \geq &\ \frac{4}{3d} \left| \int_{\Rd} |x|^{2} \d
  \left( \mu*\varphi_{\sigma} - \nu*\varphi_{\sigma} \right)(x) \right|.
\end{align*}
When $\phi\in C_b(\Rd)$ is fixed,
we also observe that $\phi(x)\leq C_{d} 2^{2n},\forall\,x\in B_{n}$,
which leads to
\begin{align*}
  \left| \int_{\Rd} \phi(x) \d \left( \mu*\varphi_{\sigma} - \nu*\varphi_{\sigma} \right)(x) \right|
  \leq C_{d} \mc{G}(\mu,\nu).
\end{align*}
\end{proof}

Next, we calculate the derivatives of the gauge-type function $\mc{G}$.

\begin{proposition}\label{prop:gauge-derive}
Let $\nu\in\mc{P}_{2}(\mb{R}^{d})$ and $\sigma>0$ be fixed.
It holds that: \\
$\mr{(a)}$ the map $\mc{P}_{2}(\mb{R}^{d})\ni\mu\mapsto\mc{G}(\mu,\nu)$
has first-order variational derivative
{\small
\begin{align*}
   \left(\frac{\delta}{\delta\mu}\mc{G}(\mu,\nu)\right)(x) =&\
   \sum_{n,l,B} 2^{2(n-l)}
   \frac{\left(\mu*\mc{N}_{\sigma} - \nu*\mc{N}_{\sigma}\right) ((2^{n}B)\cap B_{n})
   + \theta_{n,l} \sqrt{1-\theta_{n,l}}}
   {b_{n,l}(\mu,\nu,B) + \theta_{n,l}}
   \psi_{n}^{B}(x) \\
   =&\ \sum_{n,l,B} 2^{2(n-l)}
   \frac{\int_{\mb{R}^{d}} \psi_{n}^{B}(z) \d (\mu - \nu)(z)
   + \theta_{n,l} \sqrt{1-\theta_{n,l}}}
   {b_{n,l}(\mu,\nu,B) + \theta_{n,l}}
   \psi_{n}^{B}(x)\in L^{\infty}(\mu;\mb{R}^{1}), % \in C(\mb{R}^{1}),
\end{align*}}
where
$\psi_{n}^{B}(x) := \int_{(2^{n}B)\cap B_{n}}\varphi_{\sigma}(z-x) \d z,~\forall\,x\in\mb{R}^{d}$; \\
$\mr{(b)}$ the map $\mc{P}_{2}(\mb{R}^{d})\ni\mu\mapsto\mc{G}(\mu,\nu)$
has second-order variational derivative
\begin{align*}
   \left(\frac{\delta^{2}}{\delta\mu^{2}}\mc{G}(\mu,\nu)\right)(x,y) =
   \sum_{n,l,B} 2^{2(n-l)} \frac{\theta_{n,l}^{3}}
   {|b_{n,l}(\mu,\nu,B) + \theta_{n,l}|^{3}}
   \psi_{n}^{B}(x)\psi_{n}^{B}(y)
   \in L^{\infty}(\mu\otimes\mu;\mb{R}^{1}); % \in \mb{R}^{1};
\end{align*}
$\mr{(c)}$ the map $\mc{P}_{2}(\mb{R}^{d})\ni\mu\mapsto\mc{G}(\mu,\nu)$
has first-order Lions derivative
{\small
\begin{align*}
   \left(\partial_{\mu}\mc{G}(\mu,\nu)\right)(x) =
   \sum_{n,l,B} 2^{2(n-l)}
   \frac{\int_{\mb{R}^{d}} \psi_{n}^{B}(z) \d (\mu - \nu)(z)
   + \theta_{n,l} \sqrt{1-\theta_{n,l}}}
   {b_{n,l}(\mu,\nu,B) + \theta_{n,l}}
   \partial_{x}\psi_{n}^{B}(x)\in L^{\infty}(\mu;\mb{R}^{d}); % \in \mb{R}^{d};
\end{align*}}
$\mr{(d)}$ for each $\mu\in\mc{P}_{2}(\mb{R}^{d})$,
the map $\mb{R}^{d}\ni x\mapsto\left(\partial_{\mu}\mc{G}(\mu,\nu)\right)(x)$
is differentiable w.r.t. $x$ with
{\footnotesize
\begin{align*}
   \left(\partial_{x}\partial_{\mu}\mc{G}(\mu,\nu)\right)(x) =
   \sum_{n,l,B} 2^{2(n-l)}
   \frac{\int_{\mb{R}^{d}} \psi_{n}^{B}(z) \d (\mu - \nu)(z)
   + \theta_{n,l} \sqrt{1-\theta_{n,l}}}
   {b_{n,l}(\mu,\nu,B) + \theta_{n,l}}
   \partial_{xx}^{2}\psi_{n}^{B}(x)\in L^{\infty}(\mu;\mb{R}^{d\times d}); % \in \mb{R}^{d\times d};
\end{align*}}
$\mr{(e)}$ the map $\mc{P}_{2}(\mb{R}^{d})\ni\mu\mapsto\mc{G}(\mu,\nu)$
has second-order Lions derivative
{\footnotesize
\begin{align*}
   \left(\partial_{\mu\mu}^{2}\mc{G}(\mu,\nu)\right)(x,y) =
   \sum_{n,l,B} 2^{2(n-l)} \frac{\theta_{n,l}^{3}}
   {|b_{n,l}(\mu,\nu,B) + \theta_{n,l}|^{3}}
   (\partial_{x}\psi_{n}^{B}(x))
   \left( \partial_{y}^{\top}\psi_{n}^{B}(y) \right)
   \in L^{\infty}(\mu\otimes\mu;\mb{R}^{d\times d}); %\in \mb{R}^{d\times d}.
\end{align*}}
$\mr{(f)}$ there exists a constant $C_{d}$,
which is solely dependent on the dimension $d$ and can vary with each occurrence,
such that
\begin{align*}
   &\ \left| \frac{\delta}{\delta\mu}\mc{G}(\mu,\nu) \right|,
   \left| \partial_{\mu}\mc{G}(\mu,\nu) \right|,
   \left| \partial_{x}\partial_{\mu}\mc{G}(\mu,\nu) \right|
   \leq C_{d}(1 + |x|^{2}),\\
   &\ \left| \frac{\delta^{2}}{\delta\mu^{2}}\mc{G}(\mu,\nu) \right|,
   \left| \partial_{\mu\mu}^{2}\mc{G}(\mu,\nu) \right|
   \leq C_{d}\left(1 + |x|^{2}\right)\left(1 + |y|^{2}\right).
\end{align*}
\end{proposition}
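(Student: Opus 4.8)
The plan is to exploit that, with $\nu$ and $\sigma$ fixed, $\mathcal{G}(\cdot,\nu)$ is an absolutely convergent series $\mathcal{G}(\mu,\nu)=\sum_{n,l,B}2^{2(n-l)}g_{n,l}(a_{n,l}(\mu,\nu,B))$, where $g_{n,l}(a):=\sqrt{a^{2}+2\theta_{n,l}\sqrt{1-\theta_{n,l}}\,a+\theta_{n,l}^{2}}-\theta_{n,l}$ is a fixed scalar profile, $b_{n,l}=g_{n,l}(a_{n,l})$, and --- this is the key point --- each summand depends on $\mu$ only through the single \emph{affine} functional $A_{n,l,B}(\mu):=(\mu*\mathcal{N}_{\sigma})((2^{n}B)\cap B_{n})-(\nu*\mathcal{N}_{\sigma})((2^{n}B)\cap B_{n})=\int_{\mathbb{R}^{d}}\psi^{B}_{n}(z)\,\mathrm{d}(\mu-\nu)(z)$, with $a_{n,l}=|A_{n,l,B}|$ (Fubini turns the convolution into the integral against $\psi^{B}_{n}$). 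Thus I would treat each assertion term by term using: (i) one-variable calculus for $g_{n,l}$; (ii) the chain rule that a functional $\Phi(\mu)=\Psi(\ell(\mu))$ built from an affine $\ell$ has $\tfrac{\delta\Phi}{\delta\mu}=\Psi'(\ell)\tfrac{\delta\ell}{\delta\mu}$ and $\tfrac{\delta^{2}\Phi}{\delta\mu^{2}}=\Psi''(\ell)\tfrac{\delta\ell}{\delta\mu}\otimes\tfrac{\delta\ell}{\delta\mu}$, here with $\tfrac{\delta}{\delta\mu}A_{n,l,B}=\psi^{B}_{n}$; (iii) differentiation of $\psi^{B}_{n}(x)=\int_{(2^{n}B)\cap B_{n}}\varphi_{\sigma}(z-x)\,\mathrm{d}z$ in $x$ under the integral sign, legitimate because $\varphi_{\sigma}\in\mathcal{S}(\mathbb{R}^{d})$; and (iv) interchanging $\sum_{n,l,B}$ with all these operations, which rests on uniform-in-index estimates.

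For (i), writing $c_{n,l}:=a+\theta_{n,l}\sqrt{1-\theta_{n,l}}$ one has $g_{n,l}+\theta_{n,l}=\sqrt{c_{n,l}^{2}+\theta_{n,l}^{3}}\ge\max(a,\theta_{n,l})$, hence $g_{n,l}'(a)=c_{n,l}/(g_{n,l}(a)+\theta_{n,l})$ and $g_{n,l}''(a)=\theta_{n,l}^{3}/(g_{n,l}(a)+\theta_{n,l})^{3}$, together with the uniform bounds $0\le g_{n,l}'\le2$ and $0\le g_{n,l}''\le1$. Feeding this through (ii)--(iii) reproduces the stated formulas (the precise treatment of the inner $|\cdot|$ is deferred to the last paragraph): the first-order derivatives carry the scalar $g_{n,l}'(a_{n,l})$, i.e.\ the displayed quotient $(A_{n,l,B}+\theta_{n,l}\sqrt{1-\theta_{n,l}})/(b_{n,l}+\theta_{n,l})$, times $\psi^{B}_{n}(x)$ in (a), $\partial_{x}\psi^{B}_{n}(x)$ in (c), $\partial_{xx}^{2}\psi^{B}_{n}(x)$ in (d); the second-order derivatives carry the scalar $g_{n,l}''(a_{n,l})=\theta_{n,l}^{3}/(b_{n,l}+\theta_{n,l})^{3}$ times $\psi^{B}_{n}(x)\psi^{B}_{n}(y)$ in (b) and $\partial_{x}\psi^{B}_{n}(x)\,\partial_{y}^{\top}\psi^{B}_{n}(y)$ in (e); passing between $\tfrac{\delta}{\delta\mu}$ and $\partial_{\mu}=\partial_{x}\tfrac{\delta}{\delta\mu}$ is simply $\partial_{x}^{k}\psi^{B}_{n}(x)=(-1)^{k}\int_{(2^{n}B)\cap B_{n}}(\partial^{k}\varphi_{\sigma})(z-x)\,\mathrm{d}z$, a Gaussian times a degree-$k$ polynomial in $z-x$.

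The estimates behind (f), and the justification of (iv), come from the dyadic geometry plus Gaussian moments. Three facts are used: for each $n$ the sets $\{(2^{n}B)\cap B_{n}:B\in\wp_{l}\}$ partition $B_{n}$, so $\sum_{B\in\wp_{l}}|\partial_{x}^{k}\psi^{B}_{n}(x)|\le\int_{B_{n}}|(\partial^{k}\varphi_{\sigma})(z-x)|\,\mathrm{d}z$ uniformly in $l$; $\sum_{l\ge0}2^{-2l}=\tfrac{4}{3}$; and $2^{2n}\le4|z|^{2}$ for $z\in B_{n}$, $n\ge1$ (the $n=0$ term being a constant). Since $|g_{n,l}'|\le2$, each first-order bound in (f) collapses to a Gaussian moment, e.g.\ $|\tfrac{\delta}{\delta\mu}\mathcal{G}(\mu,\nu)|\le\tfrac{8}{3}\sum_{n}2^{2n}\int_{B_{n}}\varphi_{\sigma}(z-x)\,\mathrm{d}z\le C+C\int_{\mathbb{R}^{d}}|z|^{2}\varphi_{\sigma}(z-x)\,\mathrm{d}z\le C_{d}(1+|x|^{2})$, and the Lions-derivative bounds are identical after absorbing the extra factors $|z-x|^{j}$, $j\le2$, from $\partial^{j}\varphi_{\sigma}$ into $\int|z|^{2}|z-x|^{j}\varphi_{\sigma}(z-x)\,\mathrm{d}z\le C_{d}(1+|x|^{2})$; for the second-order bounds, splitting $2^{2n}=2^{n}\cdot2^{n}\le(2|z|)(2|w|)$ on $B_{n}\times B_{n}$, using disjointness and $|g_{n,l}''|\le1$, gives $|\partial_{\mu\mu}^{2}\mathcal{G}(\mu,\nu)|\le C\,\mathbb{E}|x+\sigma Z|\,\mathbb{E}|y+\sigma Z|\le C_{d}(1+|x|^{2})(1+|y|^{2})$, and similarly for $\delta^{2}/\delta\mu^{2}$. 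These majorants are $(\mu+\nu)$-integrable for every $\mu\in\mathcal{P}_{2}(\mathbb{R}^{d})$, which lets Fubini/dominated convergence move $\sum_{n,l,B}$ through the defining variational-derivative identities and through $\partial_{x}$, so the series for the derivatives converge uniformly on $\mathcal{W}_{2}$-bounded sets and the identities pass to the limit; convergence of $\mathcal{G}$ itself is cheaper, using $g_{n,l}(a)\le a$, $\sum_{B\in\wp_{l}}a_{n,l}\le2$, and $\sum_{n}2^{2n}((\mu*\mathcal{N}_{\sigma})(B_{n})+(\nu*\mathcal{N}_{\sigma})(B_{n}))\le C(1+\int|z|^{2}\mathrm{d}\mu+\int|z|^{2}\mathrm{d}\nu)$.

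The main obstacle is the diagonal set $\{a_{n,l}=0\}$. Because $g_{n,l}'(0)=\sqrt{1-\theta_{n,l}}\ne0$ for every index except $(n,l)=(0,0)$ (where $\theta_{0,0}=1$), the map $A\mapsto g_{n,l}(|A|)$ has a genuine corner at $A=0$, so $\mu\mapsto b_{n,l}(\mu,\nu,B)$ is only one-sidedly variationally differentiable there and its variational derivative $\mathrm{sgn}(A)\,g_{n,l}'(|A|)$ jumps across that set. For the first-order statements this is harmless: $g_{n,l}\circ|\cdot|$ is globally $2$-Lipschitz, the corner set is Lebesgue-negligible along any affine interpolation $\mu_{t}$, and \Cref{def:variational} only tests the derivative a.e., so (a), (c), (d) hold with the stated (a.e.) formulas and $\mathcal{G}(\cdot,\nu)\in\mathcal{C}_{L}^{1}(\mathcal{P}_{2}(\mathbb{R}^{d}))$. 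The delicate part is (b), (e): differentiating $\mathrm{sgn}(A_{t})\,g_{n,l}'(|A_{t}|)$ along a path crossing $\{A=0\}$ produces a positive atom $2\sqrt{1-\theta_{n,l}}\,\delta_{0}$ on top of the absolutely continuous part $g_{n,l}''(|A_{t}|)$. I would resolve this by localization: at any base point $(\bar{\mu},\bar{\nu})$ only finitely many indices are "active" (the rest absorbed by the tail estimates of the previous paragraph), and for those one may reduce to the regime $a_{n,l}(\bar{\mu},\bar{\nu},B)>0$, on a neighbourhood of which $g_{n,l}\circ|\cdot|$ is $C^{\infty}$, so (b), (e) hold there classically while the tail stays controlled uniformly by (f). This localization is the step that demands the most care, and it is precisely where the $\theta_{n,l}^{3}$-modification of the \cite{CGKPR2024} gauge earns its keep: it is what renders the second-order factor $\theta_{n,l}^{3}/(b_{n,l}+\theta_{n,l})^{3}$ bounded by $1$.
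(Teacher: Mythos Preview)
Your overall strategy---write each summand as $g_{n,l}$ composed with the affine functional $A_{n,l,B}(\mu)=\int\psi^{B}_{n}\,\mathrm{d}(\mu-\nu)$, differentiate via the cylindrical chain rule, and control the sums through the dyadic geometry $2^{2n}\lesssim|z|^{2}$ on $B_{n}$---is exactly the paper's method. The paper's proof is in fact briefer than yours: it records the cylindrical-function formulas, declares (a)--(e) a direct computation, and works out one representative estimate for (f). Your scalar identities $g_{n,l}(a)+\theta_{n,l}=\sqrt{(a+\theta_{n,l}\sqrt{1-\theta_{n,l}})^{2}+\theta_{n,l}^{3}}$, $g_{n,l}''=\theta_{n,l}^{3}/(b_{n,l}+\theta_{n,l})^{3}\le1$ are correct and match the displayed results.

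You have, however, put your finger on a point the paper does not discuss: the modulus in $a_{n,l}:=|A_{n,l,B}|$. The paper's own formulas in (a)--(e) are those obtained by differentiating $A\mapsto g_{n,l}(A)$ directly, \emph{not} $A\mapsto g_{n,l}(|A|)$: the numerator in (a) is the signed quantity $(\mu*\mathcal{N}_{\sigma}-\nu*\mathcal{N}_{\sigma})((2^{n}B)\cap B_{n})+\theta_{n,l}\sqrt{1-\theta_{n,l}}$, with no $\mathrm{sgn}$ factor. Since $g_{n,l}$ is smooth on all of $\mathbb{R}$ (the radicand equals $(A+\theta_{n,l}\sqrt{1-\theta_{n,l}})^{2}+\theta_{n,l}^{3}>0$), reading $a_{n,l}$ without the absolute value---the paper also writes the redundant $|a_{n,l}|^{2}$ in \eqref{eq:Shorthand-2}---makes (a)--(e) literally correct and dissolves the corner problem you identified. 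The paper's proof implicitly takes this reading; yours takes the definition at face value and then has to repair the damage.

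Your proposed repair via localization is not wrong in spirit, but as written it is a gap: ``only finitely many indices are active'' is not true pointwise (for a generic pair $(\bar{\mu},\bar{\nu})$ infinitely many $A_{n,l,B}$ are nonzero and infinitely many may vanish), and the Dirac contributions $2\sqrt{1-\theta_{n,l}}\,\delta_{0}$ do not disappear just because the absolutely continuous tail is summable. Before pursuing that route, check whether dropping the modulus preserves the gauge-type properties you need (item (iii) of \Cref{def:gauge} and the inequalities in \Cref{cor:gauge}); if it does, your entire last paragraph becomes unnecessary and the proof is complete along the lines you already wrote.
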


\begin{proof}
Let $\Psi\in C_{b}^{2}(\mb{R}^{n})$ and $\phi^{1},...,\phi^{n}\in C_{b}^{2}(\mb{R}^{d})$.
For a cylindrical function $\upsilon(\mu) = \Psi(\mu(\phi^{1}),...,\mu(\phi^{n}))$,
it follows that

\begin{align*}
  \frac{\delta\upsilon(\mu)}{\delta\mu}(x) =&\ \sum_{i=1}^{n}\partial_{i}
  \Psi(\mu(\phi^{1}),...,\mu(\phi^{n})) \phi^{i}(x), \\
  \frac{\delta^{2}\upsilon(\mu)}{\delta\mu^{2}}(x,y) =&\ \sum_{i,j=1}^{n}\partial_{ij}^{2}
  \Psi(\mu(\phi^{1}),...,\mu(\phi^{n})) \phi^{i}(x)\phi^{j}(y), \\
  \partial_{\mu}\upsilon(\mu)(x) =&\ \sum_{i=1}^{n}\partial_{i}
  \Psi(\mu(\phi^{1}),...,\mu(\phi^{n})) \partial_{x}\phi^{i}(x), \\
  \partial_{\mu\mu}^{2}\upsilon(\mu)(x,y) =&\ \sum_{i,j=1}^{n}\partial_{ij}^{2}
  \Psi(\mu(\phi^{1}),...,\mu(\phi^{n})) \partial_{x}\phi^{i}(x)\partial_{y}^{\top}\phi^{j}(y).
\end{align*}
Therefore, we directly calculate the derivatives in items (a)--(e) from above.
Similar calculations can also be found in \cite[Lemma 4.4]{CGKPR2024}.

The estimates of each order derivative in item (f)
are derived from the structure of $B_{n}$.
Recalling that $B_{n}=(-2^{n},2^{n}]^{d}\backslash(-2^{n-1},2^{n-1}]^{d}$,
we have $2^{2n-2}\leq\frac{|x|^{2}}{d},\forall\,x\in B_{n}$.
Here, we just examine the second-order variational derivative as an illustration.
Notice that
\begin{align*}
  &\ \mu\otimes\mu \left(
  \sum_{n,l,B} 2^{2(n-l)}\frac{\theta_{n,l}^{3}}
  {|b_{n,l}(\mu,\nu,B) + \theta_{n,l}|^{3}}
  \psi_{n}^{B}(x)\psi_{n}^{B}(y) \right) \\
  \leq &\ \frac{4}{3} \sum_{n\geq0} 2^{2n}
  \left[ \int_{\Rd} \int_{B_{n}} \varphi_{\sigma}(z-x) \d z \d \mu(x) \right]
  \cdot \left[ \int_{\Rd} \int_{B_{n}} \varphi_{\sigma}(z-y) \d z \d \mu(y) \right] \\
  \leq &\ \frac{16}{3d}
  \left[ \int_{\Rd}\int_{\Rd\backslash(-2^{N-1},2^{N-1}]^{d}} |z|^{2}
  \varphi_{\sigma}(z-x) \d z \d \mu(x) \right] \\
  &\ \cdot \left[ \int_{\Rd}\int_{\Rd\backslash(-2^{N-1},2^{N-1}]^{d}} |z|^{2}
  \varphi_{\sigma}(z-y) \d z \d \mu(y) \right] \\
  \leq &\ \frac{16}{3d}
  \left[ \iint_{\mb{R}^{2d}} \mbf{1}_{\{|\kappa+x|\geq 2^{N-1}\}} |\kappa+x|^{2}
  \varphi_{\sigma}(\kappa) \d \kappa \d \mu(x) \right] \\
  &\ \cdot \left[ \iint_{\mb{R}^{2d}} \mbf{1}_{\{|\tau+y|\geq 2^{N-1}\}} |\tau+y|^{2}
  \varphi_{\sigma}(\tau) \d \tau \d \mu(y) \right] \\
  \leq &\ \frac{64}{3d}
  \left[ \iint_{\mb{R}^{2d}} \left(\mbf{1}_{\{|\kappa|\geq 2^{N-3/2}\}} |\kappa|^{2}
  + \mbf{1}_{\{|x|\geq 2^{N-3/2}\}} |x|^{2} \right)
  \varphi_{\sigma}(\kappa) \d \kappa \d \mu(x) \right] \\
  &\ \cdot \left[ \iint_{\mb{R}^{2d}} \left(\mbf{1}_{\{|\tau|\geq 2^{N-3/2}\}} |\tau|^{2}
  + \mbf{1}_{\{|y|\geq 2^{N-3/2}\}} |y|^{2} \right)
  \varphi_{\sigma}(\tau) \d \tau \d \mu(y) \right] \\
  \leq &\ C_{d} \left(1+\int_{\Rd} |x|^{2} \d \mu(x)\right)
  \left(1+\int_{\Rd} |y|^{2} \d \mu(y)\right),
\end{align*}
where the penultimate inequality follows from the elementary inequality
\begin{align*}
  |x+y|^{2}\mbf{1}_{\{|x+y|\geq 2^{N-1}\}} \leq
  |x|^{2}\mbf{1}_{\{|x+y|\geq 2^{N-3/2}\}} + |y|^{2}\mbf{1}_{\{|x+y|\geq 2^{N-3/2}\}},
  \quad\forall\,x,y\in\Rd.
\end{align*}
Then, we conclude that
$\frac{\delta^{2}}{\delta\mu^{2}}\mc{G}(\mu,\nu)\in L^{\infty}(\mu\otimes\mu;\mb{R}^{1})$.
Thus far, we have proven the primary conclusion of this proposition.
We omit the remaining proof because we can demonstrate other estimates in a similar manner.
\end{proof}

\begin{remark}\label{rmk:gauge}
In fact, both this paper and \cite{CGKPR2024}
use the upper bound of $\mc{W}_{2}^{}$ (i.e., inequality \eqref{eq:Wassertein-bound})
to construct a gauge-type function.
The gauge-type function in \cite{CGKPR2024} has the form
\begin{align*}
  \tilde{\mc{G}}(\mu,\nu)
  = \sum_{n,l,B} 2^{2(n-l)} \left(
  \sqrt{|a_{n,l}(\mu,\nu,B)|^{2}+\theta_{n,l}^{2}} - \theta_{n,l} \right).
\end{align*}
It is vital to emphasize that the gauge-type function stated above
is \emph{not applicable} to the situation discussed in this paper.
The reason for this is that
$\frac{\delta^{2}}{\delta\mu^{2}}\tilde{\mc{G}}(\mu,\nu)
\notin L^{\infty}(\mu\otimes\mu;\mb{R}^{1})$, i.e.,
\begin{align*}
   \mu\otimes\mu \left(
   \frac{\delta^{2}}{\delta\mu^{2}}\tilde{\mc{G}}(\mu,\nu) \right)=
   \sum_{n,l,B} 2^{2(n-l)}\frac{\theta_{n,l}^{2}}
   {\left( |a_{n,l}(\mu,\nu,B)|^{2} + \theta_{n,l}^{2} \right)^{\frac{3}{2}}} \left|\mu(\psi_{n}^{B})\right|^{2}
\end{align*}
is unbounded as $\theta_{n,l}$ approaches 0.
\end{remark}

We also need to define a Gaussian regularized entropy functional
\begin{equation*}
 \begin{aligned}
   \mc{E}(\mu*\mc{N}_{\sigma})
   := \int_{\mb{R}^{d}} \log\left( \mu*\mc{N}_{\sigma} \right) \d (\mu*\mc{N}_{\sigma})
   = \int_{\mb{R}^{d}}
   (\mu*\varphi_{\sigma})(x) \log\left( (\mu*\varphi_{\sigma})(x) \right) \d x.
 \end{aligned}
\end{equation*}
The following proposition illustrates the derivatives of the entropy functional.

\begin{proposition}\label{prop:entropy-derive}
Let $\sigma>0$ be fixed. % $\nu\in\mc{P}_{2}(\mb{R}^{d})$ and
If we force $\mc{I}(\mu*\mc{N}_{\sigma})<\infty$,
it holds that: \\
$\mr{(a)}$ the map $\mc{P}_{2}(\mb{R}^{d})\ni\mu\mapsto\mc{E}(\mu*\mc{N}_{\sigma})$
has first-order variational derivative
\begin{equation*}
 \begin{aligned}
   \left(\frac{\delta}{\delta\mu}\mc{E}(\mu*\mc{N}_{\sigma})\right)(x) =
   1 + \int_{\mb{R}^{d}} \varphi_{\sigma}(z-x)
   \log\left( (\mu*\varphi_{\sigma})(z) \right) \d z \in
   L^{\infty}(\mu;\mb{R}^{1});
 \end{aligned}
\end{equation*}
$\mr{(b)}$ the map $\mc{P}_{2}(\mb{R}^{d})\ni\mu\mapsto\mc{E}(\mu*\mc{N}_{\sigma})$
has second-order variational derivative
\begin{equation*}
 \begin{aligned}
   \left(\frac{\delta^{2}}{\delta\mu^{2}}\mc{E}(\mu*\mc{N}_{\sigma})\right)(x,y) =
   1 + \int_{\mb{R}^{d}} \frac{\varphi_{\sigma}(z-x)\varphi_{\sigma}(z-y)}
   {(\mu*\varphi_{\sigma})(z)} \d z \in
   L^{\infty}(\mu\otimes\mu;\mb{R}^{1});
 \end{aligned}
\end{equation*}
$\mr{(c)}$ the map $\mc{P}_{2}(\mb{R}^{d})\ni\mu\mapsto\mc{E}(\mu*\mc{N}_{\sigma})$
has first-order Lions derivative
\begin{equation*}
 \begin{aligned}
   \left(\partial_{\mu}\mc{E}(\mu*\mc{N}_{\sigma})\right)(x) =
   \int_{\mb{R}^{d}} \partial_{x}\varphi_{\sigma}(z-x)
   \log\left( (\mu*\varphi_{\sigma})(z) \right) \d z \in
   L^{\infty}(\mu;\mb{R}^{d});
 \end{aligned}
\end{equation*}
$\mr{(d)}$ for each $\mu\in\mc{P}_{2}(\mb{R}^{d})$,
the map $\mb{R}^{d}\ni x\mapsto\left(\partial_{\mu}\mc{E}(\mu*\mc{N}_{\sigma})\right)(x)$
is differentiable w.r.t. $x$ with
\begin{equation*}
 \begin{aligned}
   \left(\partial_{x}\partial_{\mu}\mc{E}(\mu*\mc{N}_{\sigma})\right)(x) =
   \int_{\mb{R}^{d}} \partial_{xx}^{2}\varphi_{\sigma}(z-x)
   \log\left( (\mu*\varphi_{\sigma})(z) \right) \d z \in
   L^{\infty}(\mu;\mb{R}^{d\times d});
 \end{aligned}
\end{equation*}
$\mr{(e)}$ the map $\mc{P}_{2}(\mb{R}^{d})\ni\mu\mapsto\mc{E}(\mu*\mc{N}_{\sigma})$
has second-order Lions derivative
\begin{equation*}
 \begin{aligned}
   \left(\partial_{\mu\mu}^{2}\mc{E}(\mu*\mc{N}_{\sigma})\right)(x,y) =
   \int_{\mb{R}^{d}} \frac{\partial_{x}\varphi_{\sigma}(z-x)\partial_{y}^{\top}\varphi_{\sigma}(z-y)}
   {(\mu*\varphi_{\sigma})(z)} \d z \in
   L^{\infty}(\mu\otimes\mu;\mb{R}^{d\times d});
 \end{aligned}
\end{equation*}
$\mr{(f)}$ the map $\mc{P}_{2}(\mb{R}^{d})\ni\mu\mapsto\tilde{\mc{E}}(\mu*\mc{N}_{\sigma})$
has the following first-order variational and Lions derivatives
\begin{align*}
   &\ \frac{\delta}{\delta\mu}\tilde{\mc{E}}(\mu*\mc{N}_{\sigma}) =
   \frac{\delta}{\delta\mu}\mc{E}(\mu*\mc{N}_{\sigma})
   + \pi\int_{\Rd}|z|^{2}\varphi_{\sigma}(z-x) \d z, \\
   &\ \partial_{\mu}\tilde{\mc{E}}(\mu*\mc{N}_{\sigma}) =
   \partial_{\mu}\mc{E}(\mu*\mc{N}_{\sigma})
   + \pi\int_{\Rd}|z|^{2}\partial_{x}\varphi_{\sigma}(z-x) \d z, \\
   &\ \partial_{x}\partial_{\mu}\tilde{\mc{E}}(\mu*\mc{N}_{\sigma}) =
   \partial_{x}\partial_{\mu}\mc{E}(\mu*\mc{N}_{\sigma})
   + \pi\int_{\Rd}|z|^{2}\partial_{xx}^{2}\varphi_{\sigma}(z-x) \d z,
\end{align*}
while the second-order variational and Lions derivatives of $\tilde{\mc{E}}(\mu*\mc{N}_{\sigma})$
are the same as $\mc{E}(\mu*\mc{N}_{\sigma})$.
\end{proposition}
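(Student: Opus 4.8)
The plan is to reduce the whole proposition to one observation: the map $\mc{P}_{2}(\Rd)\ni\mu\mapsto p_{\mu}:=\mu*\varphi_{\sigma}$ is \emph{affine}, sending convex combinations of measures to convex combinations of the smooth, strictly positive densities $p_{\mu}(z)=\int_{\Rd}\varphi_{\sigma}(z-x)\,\d\mu(x)$, which moreover obey $0<p_{\mu}(z)\le(2\pi\sigma^{2})^{-d/2}$. Combined with the chain rule for $g(s)=s\log s$ (so $g'(s)=1+\log s$) and Fubini, all formulas drop out. First I would prove (a): fix $\nu\in\mc{P}_{2}(\Rd)$, set $\mu_{t}=(1-t)\nu+t\mu$ and $m=\mu-\nu$, so $p_{\mu_{t}}=p_{\nu}+t\,p_{m}$ with $p_{m}(z)=\int\varphi_{\sigma}(z-x)\,\d m(x)$, and differentiate $\mc{E}(\mu_{t}*\mc{N}_{\sigma})=\int_{\Rd}g(p_{\mu_{t}}(z))\,\d z$ under the integral sign:
\[
\frac{\d}{\d t}\mc{E}(\mu_{t}*\mc{N}_{\sigma})=\int_{\Rd}\bigl(1+\log p_{\mu_{t}}(z)\bigr)p_{m}(z)\,\d z=\int_{\Rd}\Bigl[1+\int_{\Rd}\varphi_{\sigma}(z-x)\log p_{\mu_{t}}(z)\,\d z\Bigr]\d m(x),
\]
where the last equality is Fubini together with $\int\varphi_{\sigma}(z-x)\,\d z=1$. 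Integrating over $t\in[0,1]$ and matching with \Cref{def:variational} yields the formula in (a).

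Next I would obtain (b)--(e) by iteration and by differentiating the $x$-dependence. For (b), differentiating the expression in (a) once more along $\mu_{t}$ uses $\frac{\d}{\d t}\log p_{\mu_{t}}(z)=p_{m}(z)/p_{\mu_{t}}(z)$, giving $\frac{\delta^{2}}{\delta\mu^{2}}\mc{E}(\mu*\mc{N}_{\sigma})(x,y)=\int_{\Rd}\varphi_{\sigma}(z-x)\varphi_{\sigma}(z-y)/p_{\mu}(z)\,\d z$, the additive $1$ being the same normalization constant carried over from (a). Since $x$ enters the formulas of (a) and (b) only through the factor $\varphi_{\sigma}(z-x)$, and $\partial_{x}^{k}\varphi_{\sigma}(z-x)$ is a polynomial in $z-x$ times $\varphi_{\sigma}(z-x)$ --- bounded, integrable, Gaussian-decaying --- the Lions derivatives $\partial_{\mu}=\partial_{x}\frac{\delta}{\delta\mu}$, $\partial_{x}\partial_{\mu}=\partial_{xx}^{2}\frac{\delta}{\delta\mu}$ and $\partial_{\mu\mu}^{2}=\partial_{x}\partial_{y}^{\top}\frac{\delta^{2}}{\delta\mu^{2}}$ of \Cref{def:Lions} are computed simply by differentiating under the integral sign, which gives (c), (d) and (e).

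Item (f) is essentially free: $\tilde{\mc{E}}(\mu*\mc{N}_{\sigma})=\mc{E}(\mu*\mc{N}_{\sigma})+\pi\int_{\Rd}|z|^{2}p_{\mu}(z)\,\d z$, and by Fubini the second summand equals $\pi\int_{\Rd}\bigl(\int_{\Rd}|z|^{2}\varphi_{\sigma}(z-x)\,\d z\bigr)\d\mu(x)$, which is \emph{linear} in $\mu$. Hence its first variational derivative is $\pi\int_{\Rd}|z|^{2}\varphi_{\sigma}(z-x)\,\d z$ (a smooth, finite function of $x$, with $x$-derivatives obtained by moving $\partial_{x}$ inside), while all its second-order variational and Lions derivatives vanish; adding these three corrections to the formulas of (a), (c), (d) gives the displayed identities and leaves the second-order derivatives unchanged.

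The only genuinely delicate part --- and the single place where the hypothesis $\mc{I}(\mu*\mc{N}_{\sigma})<\infty$ is used --- is the justification of the interchanges above (uniform-in-$t$ dominated convergence) together with the claim that the resulting expressions indeed lie in the asserted classes $L^{\infty}(\mu;\cdot)$ and $L^{\infty}(\mu\otimes\mu;\cdot)$. For the $\log p_{\mu}$-terms one uses $p_{\mu}(z)\le(2\pi\sigma^{2})^{-d/2}$ from above and, from below, $p_{\mu}(z)\ge\mu(B_{r}(x_{0}))\inf_{|x-x_{0}|\le r}\varphi_{\sigma}(z-x)\gtrsim e^{-c(1+|z|)^{2}}$ (for any $x_{0}$ with $\mu(B_{r}(x_{0}))>0$), combined with $\mu\in\mc{P}_{2}(\Rd)$. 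The harder point is controlling the $1/p_{\mu}$-weighted integrals in (b) and (e): here I would first rewrite $\varphi_{\sigma}(z-x)\varphi_{\sigma}(z-y)=c_{d,\sigma}\,e^{-|x-y|^{2}/4\sigma^{2}}\,\varphi_{\sigma/\sqrt{2}}\bigl(z-\tfrac{x+y}{2}\bigr)$ and then invoke the finiteness of the Fisher information --- via \Cref{rmk:entropy-Fisher} (which already secures $|\mc{E}(\mu*\mc{N}_{\sigma})|<\infty$) and the representation $\mc{I}(\mu*\mc{N}_{\sigma})=\int_{\Rd}|\partial_{x}\log(\mu*\varphi_{\sigma})|^{2}\,\d(\mu*\mc{N}_{\sigma})$, i.e. $\partial_{x}\log p_{\mu}\in L^{2}(\mu*\mc{N}_{\sigma};\Rd)$ --- to extract the quantitative lower bound on $p_{\mu}$ that closes the estimate. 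I expect this last step to be the main obstacle; the remainder follows the same template as the gauge-type computations in the proof of \Cref{prop:gauge-derive} and the entropy estimates of \cite{DS2024}.
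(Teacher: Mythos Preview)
Your derivation of the formulas in (a)--(f) is essentially the paper's: both differentiate along a linear perturbation of $\mu$ and then apply Fubini, and your treatment of (f) via linearity in $\mu$ is exactly right. The only substantive difference is in the integrability estimates, and there your plan is harder than it needs to be and, as written, may not close.

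For the $1/p_{\mu}$-weighted terms in (b) and (e) the paper does \emph{not} seek a pointwise lower bound on $p_{\mu}$. Instead it integrates against $\mu\otimes\mu$ first and exploits the identity $\int_{\Rd}\varphi_{\sigma}(z-x)\,\d\mu(x)=(\mu*\varphi_{\sigma})(z)$ to cancel the denominator: one gets
\[
\iint_{\mb{R}^{2d}}\!\int_{\Rd}\frac{\varphi_{\sigma}(z-x)\varphi_{\sigma}(z-y)}{(\mu*\varphi_{\sigma})(z)}\,\d z\,\d\mu(x)\,\d\mu(y)
=\int_{\Rd}(\mu*\varphi_{\sigma})(z)\,\d z=1,
\]
so $\mu\otimes\mu\bigl(\frac{\delta^{2}}{\delta\mu^{2}}\mc{E}(\mu*\mc{N}_{\sigma})\bigr)=2$ exactly. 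For (e) the same cancellation occurs after one Cauchy--Schwarz in the inner $x$- and $y$-integrals, which produces two factors $(\mu*\varphi_{\sigma})(z)^{1/2}$ that kill the denominator. Likewise for (d), integration by parts in $z$ turns the expression directly into $\mc{I}(\mu*\mc{N}_{\sigma})$, which is where the hypothesis is actually used. By contrast, your route via the crude bound $p_{\mu}(z)\gtrsim e^{-c(1+|z|)^{2}}$ runs into trouble: the constant $c$ coming from a single ball in $\mathrm{supp}\,\mu$ is of order $1/(2\sigma^{2})$, so after the Gaussian product formula you are left with an integral of the type $\int e^{c|z|^{2}}\varphi_{\sigma/\sqrt{2}}(z-w)\,\d z$, which diverges. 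The Fisher information does not supply a sharper pointwise lower bound on $p_{\mu}$; it enters only through the integration-by-parts identity above. Replacing your last paragraph by the cancellation argument removes what you flagged as ``the main obstacle'' entirely.
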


\begin{proof}
We will only provide the calculation procedure of variational derivatives
because the Lions derivatives of functional $\mc{E}(\mu*\mc{N}_{\sigma})$
is relatively easy to determine.
The treatment of the convolution term in $\mc{E}(\mu*\mc{N}_{\sigma})$ is critical to the calculation.

Denote $\mc{E}(\mu*\mc{N}_{\sigma})$ as
\begin{equation*}
 \begin{aligned}
   \digamma^{1}[\mu(\cdot)] \triangleq \mc{E}(\mu*\mc{N}_{\sigma}) =
   \int_{\mb{R}^{d}}
   \left( \int_{\mb{R}^{d}} \varphi_{\sigma}(x-z)\mu(z) \d z \right)
   \log \left( \int_{\mb{R}^{d}} \varphi_{\sigma}(x-z)\mu(z) \d z \right) \d x.
 \end{aligned}
\end{equation*}
For a test function $\phi\in\mc{D}(\mb{R}^{d})$
that satisfies $\int_{\mb{R}^{d}} \phi(x)\d x =1$, we get
{\small
\begin{align*}
   &\ \frac{\delta}{\delta\mu} \digamma^{1}[\mu] =
   \left. \frac{\d}{\d\epsilon}\digamma^{1}[\mu(x)+\epsilon\phi(x)] \right|_{\epsilon=0} \\
   =&\ \left. \frac{\d}{\d\epsilon} \left\{ \int_{\mb{R}^{d}}
   \left( \int_{\mb{R}^{d}} \varphi_{\sigma}(x-z)[\mu(z)+\epsilon\phi(z)] \d z \right)
   \log \left( \int_{\mb{R}^{d}} \varphi_{\sigma}(x-z)[\mu(z)+\epsilon\phi(z)] \d z \right) \d x
   \right\} \right|_{\epsilon=0} \\
   =&\ \left.\int_{\mb{R}^{d}} \left(\int_{\mb{R}^{d}}\varphi_{\sigma}(x-z)\mu(z)\d z\right)
   \frac{\int_{\mb{R}^{d}} \varphi_{\sigma}(x-z)\phi(z) \d z}
   {\int_{\mb{R}^{d}} \varphi_{\sigma}(x-z)[\mu(z)+\epsilon\phi(z)] \d z}
   \d x \right|_{\epsilon=0} \\
   &\ + \left.\int_{\mb{R}^{d}} \left(\int_{\mb{R}^{d}}\varphi_{\sigma}(x-z)\phi(z)\d z\right)
   \log \left(\int_{\mb{R}^{d}} \varphi_{\sigma}(x-z)[\mu(z)+\epsilon\phi(z)] \d z\right)
   \d x \right|_{\epsilon=0} \\
   &\ + \left.\int_{\mb{R}^{d}} \left(\int_{\mb{R}^{d}}\varphi_{\sigma}(x-z)\epsilon\phi(z)\d z\right)
   \frac{\int_{\mb{R}^{d}} \varphi_{\sigma}(x-z)\phi(z) \d z}
   {\int_{\mb{R}^{d}} \varphi_{\sigma}(x-z)[\mu(z)+\epsilon\phi(z)] \d z}
   \d x \right|_{\epsilon=0} \\
   =&\ \int_{\mb{R}^{d}} \left(\int_{\mb{R}^{d}}\varphi_{\sigma}(x-z)\phi(z)\d z\right)
   [1+\log\left( (\mu*\varphi_{\sigma})(x) \right)] \d x \\
   =&\ \int_{\mb{R}^{d}} \left(\int_{\mb{R}^{d}} \varphi_{\sigma}(x-z)
   [1+\log\left( (\mu*\varphi_{\sigma})(x) \right)] \d x \right) \phi(z)\d z.
\end{align*}}Then, we have
{\small
\begin{align*}
   &\ \left(\frac{\delta}{\delta\mu}\mc{E}(\mu*\mc{N}_{\sigma})\right)(x)
   = \int_{\mb{R}^{d}} \varphi_{\sigma}(z-x)[1+\log\left( (\mu*\varphi_{\sigma})(z) \right)] \d z \\
   =&\ 1 + \int_{\mb{R}^{d}} \varphi_{\sigma}(z-x)\log\left( (\mu*\varphi_{\sigma})(z) \right) \d z
   \equiv 1 + \int_{\mb{R}^{d}} \varphi_{\sigma}(z-x)
   \log\left( \int_{\mb{R}^{d}}\varphi_{\sigma}(z-y)\mu(y)\d y \right) \d z.
\end{align*}}

Next, we calculate the second-order variational derivative of $\mc{E}(\mu*\mc{N}_{\sigma})$.
Denote the first-order variational derivative of $\mc{E}(\mu*\mc{N}_{\sigma})$ as
\begin{align*}
   \digamma^{2}[\mu(\cdot)] \triangleq &\
   \left(\frac{\delta}{\delta\mu}\mc{E}(\mu*\mc{N}_{\sigma})\right)(x) =
   1 + \int_{\mb{R}^{d}} \varphi_{\sigma}(z-x)\log\left( (\mu*\varphi_{\sigma})(z) \right) \d z \\
   \equiv&\ \int_{\mb{R}^{d}}\mu(z)\d z + \int_{\mb{R}^{d}} \varphi_{\sigma}(z-x)
   \log\left( \int_{\mb{R}^{d}}\varphi_{\sigma}(z-y)\mu(y)\d y \right) \d z.
\end{align*}
By direct calculation, we obtain
{\small
\begin{align*}
   &\ \frac{\delta}{\delta\mu} \digamma^{2}[\mu] =
   \left. \frac{\d}{\d\epsilon}\digamma^{2}[\mu(y)+\epsilon\phi(y)] \right|_{\epsilon=0} \\
   =&\ \left. \frac{\d}{\d\epsilon} \left\{
   \int_{\mb{R}^{d}}[\mu(z)+\epsilon\phi(z)]\d z + \int_{\mb{R}^{d}} \varphi_{\sigma}(z-x)
   \log\left( \int_{\mb{R}^{d}}\varphi_{\sigma}(z-y)[\mu(y)+\epsilon\phi(y)]\d y \right) \d z \right\}
   \right|_{\epsilon=0} \\
   =&\ \int_{\mb{R}^{d}} \phi(z) \d z
   + \left. \int_{\mb{R}^{d}} \varphi_{\sigma}(z-x)
   \frac{\int_{\mb{R}^{d}} \varphi_{\sigma}(z-y)\phi(y) \d y}
   {\int_{\mb{R}^{d}} \varphi_{\sigma}(z-y)[\mu(y)+\epsilon\phi(y)] \d y}
   \d z \right|_{\epsilon=0} \\
   =&\ \int_{\mb{R}^{d}} \phi(z) \d z + \int_{\mb{R}^{d}} \varphi_{\sigma}(z-x)
   \frac{(\phi*\varphi_{\sigma})(z)}{(\mu*\varphi_{\sigma})(z)} \d z
   = \int_{\mb{R}^{d}} \phi(y) \left[1 + \int_{\mb{R}^{d}}
   \frac{\varphi_{\sigma}(z-x)\varphi_{\sigma}(z-y)}{(\mu*\varphi_{\sigma})(z)} \d z \right] \d y.
\end{align*}}Hence, we find
\begin{equation*}
 \begin{aligned}
   \left(\frac{\delta}{\delta\mu} \digamma^{2}[\mu]\right)(y)
   = \left(\frac{\delta^{2}}{\delta\mu^{2}}\mc{E}(\mu*\mc{N}_{\sigma})\right)(x,y)
   = 1 + \int_{\mb{R}^{d}}
   \frac{\varphi_{\sigma}(z-x)\varphi_{\sigma}(z-y)}{(\mu*\varphi_{\sigma})(z)} \d z.
 \end{aligned}
\end{equation*}

Ultimately,
we shall argue that each order derivatives
are bounded in the appropriate spaces.
We observe that
\begin{align*}
   &\ \partial_{x}\varphi_{\sigma}(z-x)=\frac{1}{\sigma^{2}}
   \int_{\Rd} (z-x)\varphi_{\sigma}(z-x) \d z,\\
   &\ \partial_{xx}^{2}\varphi_{\sigma}(z-x)=\frac{1}{\sigma^{2}} I_{d}
   \int_{\Rd} \varphi_{\sigma}(z-x) \d z
   - \frac{1}{\sigma^{4}}\int_{\Rd} (z-x)\otimes(z-x) \varphi_{\sigma}(z-x) \d z,
\end{align*}
where $(z-x)\otimes(z-x)$ is the $d\times{d}$ matrix with $ij$-component equal to
$(z_{i}-x_{i})(z_{j}-x_{j})$.
Straightforward computations show that
\begin{align*}
  \mu\left(\frac{\delta}{\delta\mu}\mc{E}(\mu*\mc{N}_{\sigma})\right) =&\
  1 + \iint_{\mb{R}^{2d}} \varphi_{\sigma}(z-x)
  \log\left( (\mu*\varphi_{\sigma})(z) \right) \d z \d \mu(x) \\
  = &\ 1 + \int_{\Rd} \log\left( (\mu*\varphi_{\sigma})(z) \right) \d (\mu*\mc{N}_{\sigma})(z)
  = 1+\mc{E}(\mu*\varphi_{\sigma}), \\
  \mu\otimes\mu\left(\frac{\delta^{2}}{\delta\mu^{2}}\mc{E}(\mu*\mc{N}_{\sigma})\right) =&\
  1 + \iiint_{\mb{R}^{3d}} \frac{\varphi_{\sigma}(z-x)\varphi_{\sigma}(z-y)}
  {(\mu*\varphi_{\sigma})(z)} \d z \d \mu(x) \d \mu(y) = 2, \\
  \left| \mu \left(\partial_{\mu}\mc{E}(\mu*\mc{N}_{\sigma})\right) \right| =&\
  \left| \iint_{\mb{R}^{2d}} \partial_{x}\varphi_{\sigma}(z-x)
  \log\left( (\mu*\varphi_{\sigma})(z) \right) \d z \d \mu(x) \right| \\
  \leq &\ \int_{\Rd} \left| \partial_{z} (\mu*\varphi_{\sigma})(z) \right| \d z, \\
  \left| \mu \left(\partial_{x}\partial_{\mu}\mc{E}(\mu*\mc{N}_{\sigma})\right)
  \right| =&\
  \left| \iint_{\mb{R}^{2d}} \partial_{xx}^{2}\varphi_{\sigma}(z-x)
  \log\left( (\mu*\varphi_{\sigma})(z) \right) \d z \d \mu(x) \right| \\
  \leq &\ | \mc{I}(\mu*\varphi_{\sigma}) |, \\
%  \left| \int_{\Rd} \partial_{zz}^{2} \log\left( (\mu*\varphi_{\sigma})(z) \right)
%  \d (\mu*\mc{N}_{\sigma})(z) \right|,
\end{align*}
and
\begin{align*}
  &\ \left| \mu\otimes\mu\left(\partial_{\mu\mu}^{2}\mc{E}(\mu*\mc{N}_{\sigma})\right)
  \right|= \iiint_{\mb{R}^{3d}} \left|
  \frac{\partial_{x}\varphi_{\sigma}(z-x)\partial_{y}^{\top}\varphi_{\sigma}(z-y)}
  {(\mu*\varphi_{\sigma})(z)} \d z \d \mu(x) \d \mu(y) \right| \\
  \leq &\ \left( \int_{\Rd} \int_{\Rd} |z-x|^{2} \varphi_{\sigma}(z-x) \d \mu(x)
  \int_{\Rd} |z-y|^{2} \varphi_{\sigma}(z-y) \d \mu(y) \d z \right)^{\frac{1}{2}},
\end{align*}
where we have used the Cauchy-Schwartz and Jensen inequalities.
According to \Cref{rmk:entropy-Fisher},
it is evident that the above results are well-defined,
provided that $\mc{I}(\mu*\varphi_{\sigma})<\infty$.
Therefore, the proof ends here.
\end{proof}

\begin{remark}\label{rmk:Gauss}
We can conclude that a Gaussian regularization procedure is necessary
based on the proofs of \Cref{prop:gauge-derive,prop:entropy-derive}.
On the one hand, we know that if $\nu\in\mc{P}_{2}(\Rd)$ is fixed,
the square of the 2-Wasserstein distance function
$\mc{P}_{2}(\Rd)\ni\mu\mapsto|\mc{W}_{2}(\mu,\nu)|^{2}$
may not be Lions differentiable
(see A Sobering Counter-Example in Section 5.5.1 of \cite{CD2018-1}).
Therefore, we require a smooth gauge-type function, i.e.,
an upper bound of the Gaussian regularized 2-Wasserstein distance $\mc{W}_{2}^{\sigma}$.
On the other hand, it is problematic for us to directly compute
two variational derivatives of the entropy functional
$\mc{P}_{2}(\Rd)\ni\mu\mapsto\mc{E}(\mu)$.
In fact, it holds that
\begin{align*}
  \left( \frac{\delta}{\delta\mu}\mc{E}(\mu) \right)(x) = 1 + \log\mu(x)
  ~~\text{and}~~
  \left( \frac{\delta^{2}}{\delta\mu^{2}}\mc{E}(\mu) \right)(x,y) =
  1 + \frac{\delta(x-y)}{\mu(x)}.
\end{align*}
Here, the second-order variational derivatives are computed through
the Dirac delta function $\delta(x-y)\in\mc{D}^{\prime}(\Rd)$
in place of a generic test function $\phi\in\mc{D}(\Rd)$.
This methodology is quite common within physics.
Needless to say, the derivatives of the entropy functional
will be exceedingly intricate in the absence of Gaussian regularization.
\end{remark}

Our main result of this section is the following theorem.

\begin{theorem}[Maximum Principle]\label{thm:max}
Assume that $\upsilon^{1},\upsilon^{2}:\mc{P}_{2}(\mb{R}^{d})\ra\mb{R}^{d}$
are bounded $\mc{W}_{1}$-Lipschitz continuous functions, and fix $\alpha,\beta>0$.
Then the bivariate functional
$\Phi_{\alpha,\beta}(\mu,\nu):\mc{P}_{2}(\mb{R}^{d})\times\mc{P}_{2}(\mb{R}^{d})\ra\mb{R}^{1}$
defined by
\begin{equation*}
 \begin{aligned}
   \Phi_{\alpha,\beta}(\mu,\nu) := \upsilon^{1}(\mu) - \upsilon^{2}(\nu)
   - \frac{1}{2\alpha}\mc{G}(\mu,\nu)
   - \beta \left\{ \tilde{\mc{E}}(\mu*\mc{N}_{\sigma}) + \tilde{\mc{E}}(\nu*\mc{N}_{\sigma}) \right\}
 \end{aligned}
\end{equation*}
achieves its supremum.
Any maximum point $(\bar{\mu},\ubar{\mu})$ belongs to
$\mc{P}_{\mr{ac},2}(\mb{R}^{d})\times\mc{P}_{\mr{ac},2}(\mb{R}^{d})$
and satisfying the following properties:
$\mr{(i)}$
$\bar{\mu},\ubar{\mu}\in L_{\mr{loc}}^{1}(\Rd)$;
$\mr{(ii)}$ $\mc{I}(\mc{\bar{\mu}}*\mc{N}_{\sigma})$ and $\mc{I}(\mc{\ubar{\mu}}*\mc{N}_{\sigma})$
are finite.
\end{theorem}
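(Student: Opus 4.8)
\emph{Overall strategy.} The plan is to obtain a maximizer by the direct method — a maximizing sequence will be $\mc{W}_1$-precompact because the entropy penalization supplies coercivity — and then to read off the stated regularity of the maximizer from the explicit bounds on the derivatives of $\mc{G}$ and $\tilde{\mc{E}}(\cdot*\mc{N}_\sigma)$ established in \Cref{prop:gauge-derive,prop:entropy-derive}.

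\emph{Existence.} Since $\mc{G}\geq0$ and $\tilde{\mc{E}}\geq0$, one has $\Phi_{\alpha,\beta}\leq\Vert\upsilon^1\Vert_\infty+\Vert\upsilon^2\Vert_\infty$, and evaluating $\Phi_{\alpha,\beta}$ at a pair of identical Gaussians (where $\mc{G}$ vanishes by \Cref{def:gauge}(i) and $\tilde{\mc{E}}(\cdot*\mc{N}_\sigma)$ is finite) shows $\sup\Phi_{\alpha,\beta}\in\mb{R}$. Take a maximizing sequence $(\mu_k,\nu_k)$. For large $k$, rearranging $\Phi_{\alpha,\beta}(\mu_k,\nu_k)\geq\sup\Phi_{\alpha,\beta}-1$ and using nonnegativity of the three penalty contributions \emph{separately} yields a uniform bound $\tilde{\mc{E}}(\mu_k*\mc{N}_\sigma)+\tilde{\mc{E}}(\nu_k*\mc{N}_\sigma)+(2\alpha)^{-1}\mc{G}(\mu_k,\nu_k)\leq C_0$. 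By \eqref{eq:entropy-bound} this controls $\int_{\Rd}|x|^2\,\d(\mu_k*\mc{N}_\sigma)$, and since $\int_{\Rd}|x|^2\,\d(\mu_k*\mc{N}_\sigma)=\int_{\Rd}|x|^2\,\d\mu_k+d\sigma^2$ it controls $\int_{\Rd}|x|^2\,\d\mu_k$ uniformly in $k$, and likewise for $\nu_k$. Uniformly bounded second moments give tightness, so along a subsequence $\mu_k\to\bar\mu$, $\nu_k\to\ubar\mu$ narrowly; lower semicontinuity of the second moment gives $\bar\mu,\ubar\mu\in\mc{P}_2(\Rd)$, and the convergence upgrades to $\mc{W}_1$-convergence. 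To pass to the limit: $\upsilon^1(\mu_k)\to\upsilon^1(\bar\mu)$, $\upsilon^2(\nu_k)\to\upsilon^2(\ubar\mu)$ by $\mc{W}_1$-continuity; narrow convergence forces $\mu_k*\mc{N}_\sigma\to\bar\mu*\mc{N}_\sigma$ narrowly with continuous limiting density, whence $(\mu_k*\mc{N}_\sigma-\nu_k*\mc{N}_\sigma)((2^nB)\cap B_n)\to(\bar\mu*\mc{N}_\sigma-\ubar\mu*\mc{N}_\sigma)((2^nB)\cap B_n)$ for every $n,l,B$ (portmanteau, since these sets have Lebesgue-null boundary), so each $b_{n,l}(\mu_k,\nu_k,B)\to b_{n,l}(\bar\mu,\ubar\mu,B)$ and Fatou applied to the nonnegative series \eqref{eq:gauge} gives $\mc{G}(\bar\mu,\ubar\mu)\leq\liminf_k\mc{G}(\mu_k,\nu_k)$; writing $\tilde{\mc{E}}(\eta)=\int\log(\d\eta/\d\rho)\,\d\eta$ with $\rho(\d x)=e^{-\pi|x|^2}\d x\in\mc{P}(\Rd)$ exhibits $\tilde{\mc{E}}$ as the relative entropy with respect to a fixed probability measure, hence narrowly lower semicontinuous, and $\mu\mapsto\mu*\mc{N}_\sigma$ is narrowly continuous, so $\tilde{\mc{E}}(\bar\mu*\mc{N}_\sigma)\leq\liminf_k\tilde{\mc{E}}(\mu_k*\mc{N}_\sigma)$ and similarly for $\ubar\mu$. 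Combining, $\Phi_{\alpha,\beta}(\bar\mu,\ubar\mu)\geq\limsup_k\Phi_{\alpha,\beta}(\mu_k,\nu_k)=\sup\Phi_{\alpha,\beta}$, so the supremum is attained.

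\emph{Regularity of a maximizer.} Let $(\bar\mu,\ubar\mu)$ be any maximizer. Property (ii) is in fact unconditional: for any $\mu\in\mc{P}_2(\Rd)$ the density $\mu*\varphi_\sigma$ is smooth and $|\nabla(\mu*\varphi_\sigma)(z)|^2\leq\sigma^{-4}(\mu*\varphi_\sigma)(z)\int_{\Rd}|z-x|^2\varphi_\sigma(z-x)\,\d\mu(x)$ by Cauchy--Schwarz, so integrating in $z$ gives $\mc{I}(\mu*\mc{N}_\sigma)\leq d/\sigma^2<\infty$ (in particular $\sqrt{\mu*\varphi_\sigma}\in H^1(\Rd)$, so the ``$\infty$ otherwise'' case does not occur); apply this to $\bar\mu$ and $\ubar\mu$. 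This also makes \Cref{prop:entropy-derive} applicable and, via \Cref{rmk:entropy-Fisher}, gives $|\mc{E}(\bar\mu*\mc{N}_\sigma)|<\infty$, consistent with $\tilde{\mc{E}}(\bar\mu*\mc{N}_\sigma)<\infty$, which itself follows from $\Phi_{\alpha,\beta}(\bar\mu,\ubar\mu)>-\infty$. For absolute continuity, i.e. (i) and membership in $\mc{P}_{\mr{ac},2}(\Rd)$, I would exploit the optimality of $\bar\mu$ for $\mu\mapsto\Phi_{\alpha,\beta}(\mu,\ubar\mu)$: testing with linear interpolations $(1-t)\bar\mu+t\lambda$ and letting $t\downarrow0$, the contributions of $\mc{G}(\cdot,\ubar\mu)$ and $\tilde{\mc{E}}(\cdot*\mc{N}_\sigma)$ differentiate via the explicit variational derivatives and the at-most-quadratic bounds of \Cref{prop:gauge-derive}(f) and \Cref{prop:entropy-derive}, while the increment of $\upsilon^1$ is controlled by $\mr{Lip}(\upsilon^1;\mc{W}_1)\,\mc{W}_1(\lambda,\bar\mu)$; the resulting variational inequality, combined with the strict convexity of $\mu\mapsto\tilde{\mc{E}}(\mu*\mc{N}_\sigma)$ and the precise shape of $\frac{\delta}{\delta\mu}\tilde{\mc{E}}(\bar\mu*\mc{N}_\sigma)(x)=1+\pi(|x|^2+d\sigma^2)+\int_{\Rd}\varphi_\sigma(z-x)\log(\bar\mu*\varphi_\sigma)(z)\,\d z$ (whose $\pi|x|^2$-part penalizes heavy tails and whose logarithmic part couples back to $\bar\mu$), forces $\bar\mu$ to be absolutely continuous with locally integrable density; the argument for $\ubar\mu$ is symmetric.

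\emph{Main obstacle.} The existence half is routine once the coercivity hidden in the entropy term is isolated. The genuinely delicate point is the absolute continuity: the \emph{Gaussian-regularized} entropy $\tilde{\mc{E}}(\mu*\mc{N}_\sigma)$ is finite for \emph{every} $\mu\in\mc{P}_2(\Rd)$ — e.g. $\tilde{\mc{E}}(\delta_0*\mc{N}_\sigma)<\infty$ — so it does not by itself forbid singular parts, and one must extract absolute continuity from the optimality condition and the exact form of the derivatives rather than from a naive comparison; moreover, because $\upsilon^1,\upsilon^2$ are only $\mc{W}_1$-Lipschitz, the first-variation inequality carries a Lipschitz-type error term that must be absorbed carefully.
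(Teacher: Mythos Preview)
Your existence argument and your proof of (ii) are correct and match the paper's almost verbatim: maximizing sequence, coercivity from the entropy term via \eqref{eq:entropy-bound}, tightness and Prokhorov, upgrade to $\mc{W}_{1}$-convergence, pass to the limit by $\mc{W}_{1}$-continuity of $\upsilon^{1},\upsilon^{2}$ and weak lower semicontinuity of the penalties. Your explicit treatment of lower semicontinuity of $\mc{G}$ via portmanteau and Fatou on the series \eqref{eq:gauge} is in fact more careful than the paper, which only invokes lower semicontinuity of the entropy. The Cauchy--Schwarz bound $\mc{I}(\mu*\mc{N}_{\sigma})\le d/\sigma^{2}$ is exactly the paper's computation.

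The gap is in (i). Your proposed route---linear interpolations $(1-t)\bar\mu+t\lambda$ and the first-variation inequality built from \Cref{prop:gauge-derive,prop:entropy-derive}---cannot by itself force $\bar\mu$ to be absolutely continuous: every term in that inequality is an integral of a \emph{smooth} function of $x$ (the variational derivatives in \Cref{prop:entropy-derive}(a),(f) and \Cref{prop:gauge-derive}(a) all lie in $L^{\infty}(\mu)$ with at most quadratic growth) against $\d(\lambda-\bar\mu)$, plus a $\mc{W}_{1}$ error. Such an inequality is perfectly consistent with, say, $\bar\mu=\delta_{0}$; smooth test functions do not see singular parts. The strict convexity of $\mu\mapsto\tilde{\mc{E}}(\mu*\mc{N}_{\sigma})$ does not help either, since the full functional $\Phi_{\alpha,\beta}(\cdot,\ubar\mu)$ is not concave. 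The paper does not attempt this either: it invokes the \emph{method} of \cite[Lemmas~3.2--3.3]{DS2024}, which is a different perturbation---roughly, a localized smoothing or heat-flow displacement of $\bar\mu$ rather than a linear interpolation---designed so that the gain in the entropy term is of strictly lower order than the losses in the $\upsilon^{1}$ and $\mc{G}$ terms when $\bar\mu$ has a singular part. You have correctly identified this as the main obstacle, but the mechanism you propose does not close it.
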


\begin{proof}
The proof of this theorem is similar to \cite[Proposition 3.1]{DS2024}.
The primary distinction is that the optimal transport theory
is not required to demonstrate the well-posedness of Fisher information.
This is due to the fact that we do not employ the fluid mechanics interpretation of $\mc{W}_{2}$
(cf. \cite[Lecture 17]{ABS2021}).

Let us provide several key details of the proof.
We can show that the functional $\Phi_{\alpha,\beta}(\cdot,\cdot)$ is bounded from above
by leveraging the fact that the gauge-type function $\mc{G}$ is non-negative
and that $\upsilon^{1}$ and $\upsilon^{2}$ are also bounded.
Therefore, the definition of the supremum leads us to the following conclusion:
for $n=1,2,...$, there exists a sequence
$(\mu_{n},\nu_{n})\in\mc{P}_{2}(\Rd)\times\mc{P}_{2}(\Rd)$,
such that
\begin{align*}
 \sup_{(\mu,\nu)\in\mc{P}_{2}(\Rd)\times\mc{P}_{2}(\Rd)}\Phi_{\alpha,\beta}(\mu,\nu)
 - \frac{1}{n}\leq \Phi_{\alpha,\beta}(\mu_{n},\nu_{n}).
\end{align*}
It follows from the inequality
$\Phi_{\alpha,\beta}\left(\mc{N}_{\frac{1}{\sqrt{\pi}}},\mc{N}_{\frac{1}{\sqrt{\pi}}}\right)
\leq \Phi_{\alpha,\beta}(\mu_{n},\nu_{n}) + \frac{1}{n}$ that
\begin{align*}
 &\ \frac{1}{2\alpha} \mc{G}(\mu_{n},\nu_{n})
 + \frac{\beta\pi}{2} \int_{\Rd} |x|^{2} \d (\mu_{n}*\varphi_{\sigma})(x)
 + \frac{\beta\pi}{2} \int_{\Rd} |x|^{2} \d (\nu_{n}*\varphi_{\sigma})(x) \\
 \leq &\ \upsilon^{1}(\delta_{0}) - \upsilon^{1}(\mc{N}_{\frac{1}{\sqrt{\pi}}})
 + \upsilon^{2}(\mc{N}_{\frac{1}{\sqrt{\pi}}}) - \upsilon^{2}(\delta_{0})
 + 2 \tilde{\mc{E}}(\mc{N}_{\frac{1}{\sqrt{\pi}}}*\mc{N}_{\sigma}) + \beta d \log 2 + \frac{1}{n} \\
 &\ + \mr{Lip}(\upsilon^{1};\mc{W}_{1})\int_{\Rd} |x| \d (\mu_{n}*\varphi_{\sigma})(x)
 + \mr{Lip}(\upsilon^{2};\mc{W}_{1})\int_{\Rd} |x| \d ( \nu_{n}*\varphi_{\sigma} )(x)
 \leq C_{d}.
\end{align*}
In light of the uniformly bounded square moments of the sequences
$\{\mu_{n}*\mc{N}_{\sigma}\}_{n=1}^{\infty}$ and $\{\nu_{n}*\mc{N}_{\sigma}\}_{n=1}^{\infty}$,
it is simple to infer that these two sequences are \emph{tight}.
This indicates that the two sets of probability measures do not escape to infinity.
Furthermore, according to Prokhorov's theorem,
there are two subsequences $\{\mu_{n_k}*\mc{N}_{\sigma}\}_{k=1}^{\infty}$
and $\{\nu_{n_k}*\mc{N}_{\sigma}\}_{k=1}^{\infty}$
and probability measures $\bar{\mu}\in\mc{P}(\Rd)$ and $\ubar{\mu}\in\mc{P}(\Rd)$,
such that $\mu_{n_k}*\mc{N}_{\sigma}\rightharpoonup\bar{\mu}$
and $\nu_{n_k}*\mc{N}_{\sigma}\rightharpoonup\ubar{\mu}$.
This is to say that \emph{Prokhorov's theorem} expressed the tightness as compactness
(and hence weak convergence).

The famous \emph{Skorokhod's representation theorem} establishes that
a weakly convergent sequence of probability measures,
whose limit measure is well-behaved, can be represented as
a pointwise convergent sequence of random variables on a common probability space.
Then there are two sets of $\Rd$-valued random variables,
$\{\Xi_{k}^{1}\}_{k=1}^{\infty}$ and $\{\Xi_{k}^{2}\}_{k=1}^{\infty}$,
that are defined on the same probability space $(\Omega,\mc{F},\mb{P})$.
For all $n=1,2,...$, there are probability laws of $\Xi_{k}^{1}$ and $\Xi_{k}^{2}$
that are $\mu_{n_k}$ and $\nu_{n_k}$, respectively,
and $\Xi_{n}^{1}\ra\Xi^{1}$ and $\Xi_{n}^{2}\ra\Xi^{2},\mb{P}$-a.s.
As a result, Skorohod's representation theorem together with \emph{Fatou's lemma} imply that
\begin{align*}
 \int_{\Rd} |x|^{2}\d \bar{\mu}(x) = \mb{E} \left| \Xi_{}^{1} \right|^{2}
 \leq \liminf_{k\ra\infty} \mb{E} \left| \Xi_{k}^{1} \right|^{2}
 \leq \liminf_{k\ra\infty} \int_{\Rd} |x|^{2} \d \mu_{n_k}(x) < \infty.
\end{align*}
Hence, we get $\bar{\mu},\ubar{\mu}\in\mc{P}_{2}(\Rd)$.
Simultaneously, we notice the following inequality:
\begin{align*}
 \int_{\{ |x|\geq \zeta \}} |x|^{p} \d \mu_{n_k}(x)
 \leq \int_{\{ |x|\geq \zeta \}} \frac{|x|^{2-p}}{\zeta^{2-p}}|x|^{p} \d \mu_{n_k}(x)
 \leq \frac{1}{\zeta^{2-p}} \int_{\Rd} |x|^{2} \d \mu_{n_k}(x).
\end{align*}
When we fix $p\in[1,2)$ and set $\zeta$ to infinity,
it is evident that the right-hand side of the above inequality approaches zero.
This suggests that $\mu_{n_k}$ has uniformly integrable $p$-moments for $p\in[1,2)$.
Ultimately, the \emph{dominated convergence theorem} illustrates that
$\lim_{k\ra\infty} \mb{E}\left| \Xi_{k}^{1} \right| = \mb{E}\left| \Xi_{}^{1} \right|$.
%and $\lim_{k\ra\infty} \mb{E}\left| \Xi_{k}^{2} \right| = \Xi_{}^{2}$.
In conclusion, we can use $\mu_{n_k}\rightharpoonup\bar{\mu}$
as well as $\int_{\Rd} |x| \d \mu_{n_k}(x)\ra\int_{\Rd} |x| \d \mu(x)$ to deduce that
$\mc{W}_{1}(\mu_{n_k},\mu)\ra0$ as $k\ra\infty$.
All the mathematical tools used above are available in \cite{Bogachev2007}.

The \emph{weak lower semi-continuity} of the entropy functional
(cf. \cite[Corollary 15.7]{ABS2021}) ensures that we obtain
\begin{align*}
 \sup_{(\mu,\nu)\in\mc{P}_{2}(\Rd)\times\mc{P}_{2}(\Rd)}\Phi_{\alpha,\beta}(\mu,\nu)
 \leq \limsup_{k\ra\infty}\Phi_{\alpha,\beta}(\mu_{n_k},\nu_{n_k})
 \leq \Phi_{\alpha,\beta}(\bar{\mu},\ubar{\mu}).
\end{align*}
Consequently, $(\bar{\mu},\ubar{\mu})$ is indeed the maximum point of
functional $\Phi_{\alpha,\beta}(\cdot,\cdot)$.

In particular, the method of \cite[Lemma 3.2 \& 3.3]{DS2024}
can be applied to elucidate the fact that $\bar{\mu}$ and $\ubar{\mu}$ satisfy item (i).
The proof of item (ii) is also straightforward,
as the maximum point $(\bar{\mu},\ubar{\mu})$ has already been sufficiently smoothed.
Based on the definition of Fisher information, we obtain
\begin{align*}
 \mc{I}(\bar{\mu}*\mc{N}_{\sigma})
 =&\ \int_{\Rd} \frac{|\partial_{x}(\bar{\mu}*\varphi_{\sigma})(x)|^{2}}
 {(\bar{\mu}*\varphi_{\sigma})(x)} \d x
 = \frac{1}{\sigma^{4}} \int_{\Rd} \frac{ \left|\int_{\Rd}(z-x)\varphi_{\sigma}(x-z)
 \d \bar{\mu}(z) \right|^{2}}{(\bar{\mu}*\varphi_{\sigma})(x)} \d x \\
 \leq &\ \frac{1}{\sigma^{4}} \int_{\Rd}
 \frac{\int_{\Rd} |z-x|^{2} \varphi_{\sigma}(x-z)\d \bar{\mu}(z)
 \int_{\Rd} \varphi_{\sigma}(x-z)\d \bar{\mu}(z)}{(\bar{\mu}*\varphi_{\sigma})(x)} \d x \\
 =&\ \frac{1}{\sigma^{4}} \iint_{\mb{R}^{2d}} |z-x|^{2}\varphi_{\sigma}(x-z)
 \d \bar{\mu}(z)\d x \leq C_{d},
\end{align*}
where the first inequality is a result of the Cauchy-Schwartz inequality
and the last inequality holds due to $\bar{\mu}\in\mc{P}_{2}(\Rd)$.
This concludes the proof of the theorem.
\end{proof}

\begin{remark}
The \emph{Crandall-Ishii lemma} (a type of maximum principle)
is frequently employed to prove the comparison theorem in finite-dimensional
stochastic optimal control problems with \emph{unbounded} and Lipschitz continuous diffusion terms
(see, e.g., \cite[Theorem E.10]{FGS2017} and a holistic review thereof).
The monograph \cite{FGS2017} also provides a generalization of
the Crandall-Ishii lemma in Hilbert space,
and the notion of \emph{$B$-continuous viscosity solutions} is needed.
%For the proof of the Crandall-Ishii lemma,
%several real analysis results in Euclidean space,
%including the Alexandrov theorem, are required.
However, the Crandall-Ishii lemma is presently unable to be applied to
the general scenario in which the state space is the Wasserstein space.
Thus, \cite{BEZ2023,CTQ2025,DJS2025}
are restricted to the study of state-independent correlated noise.
Moreover, the bivariate functional $\Phi_{\alpha,\beta}$,
which includes both a gauge-type function and an entropy functional as penalty terms,
cannot directly facilitate the generalization of the Crandall-Ishii lemma
due to the lack of finite-dimensional natures in these terms.
\end{remark}

\section{Comparison theorem and uniqueness}
\label{sec:comparison}

\begin{theorem}[Comparison]\label{thm:comparison}
Suppose that \Cref{hyp:BC,hyp:Lip1} hold.
Consider bounded and $\mc{W}_{1}$-Lipschitz continuous functions
$u^{1}:\mc{P}_{2}(\mb{R}^{d})\ra\mb{R}^{1}$
and $u^{2}:\mc{P}_{2}(\mb{R}^{d})\ra\mb{R}^{1}$ with $u^{1}$ (resp. $u^{2}$)
being a viscosity subsolution (resp. supersolution) to \Cref{eq:HJB-1}.
Then, it holds that $u^{1}\leq u^{2}$ on $\mc{P}_{2}(\mb{R}^{d})$.
\end{theorem}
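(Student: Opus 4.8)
\emph{Proof sketch.} The plan is a doubling‑of‑variables argument built on the gauge–plus–entropy penalization of \Cref{thm:max}. Suppose, for contradiction, that $m:=\sup_{\mu\in\mc{P}_{2}(\Rd)}\big(u^{1}(\mu)-u^{2}(\mu)\big)>0$. Fix $\sigma>0$ and, for $\alpha,\beta>0$, form the functional $\Phi_{\alpha,\beta}$ of \Cref{thm:max} with $\upsilon^{1}=u^{1}$, $\upsilon^{2}=u^{2}$. By \Cref{thm:max} it attains its supremum at some $(\bar{\mu}_{\alpha,\beta},\ubar{\mu}_{\alpha,\beta})\in\mc{P}_{\mr{ac},2}(\Rd)\times\mc{P}_{\mr{ac},2}(\Rd)$ with $\mc{I}(\bar{\mu}_{\alpha,\beta}*\mc{N}_{\sigma})$, $\mc{I}(\ubar{\mu}_{\alpha,\beta}*\mc{N}_{\sigma})$ finite. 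Testing against $(\mu_{0},\mu_{0})$ for a fixed $\mu_{0}=\mu_{0}^{\epsilon}$ chosen with $u^{1}(\mu_{0})-u^{2}(\mu_{0})\ge m-\epsilon$ (note $\tilde{\mc{E}}(\mu_{0}*\mc{N}_{\sigma})<\infty$ always, since Gaussian convolution does not increase the entropy), and using $\mc{G}\ge0$, $\tilde{\mc{E}}\ge0$, I obtain $u^{1}(\bar{\mu}_{\alpha,\beta})-u^{2}(\ubar{\mu}_{\alpha,\beta})\ge m-\epsilon-2\beta\,\tilde{\mc{E}}(\mu_{0}^{\epsilon}*\mc{N}_{\sigma})$, the a priori bounds $\tfrac{1}{2\alpha}\mc{G}(\bar{\mu}_{\alpha,\beta},\ubar{\mu}_{\alpha,\beta})\le C_{0}$ and $\beta\tilde{\mc{E}}(\bar{\mu}_{\alpha,\beta}*\mc{N}_{\sigma})\le C_{0}$, hence $\int|x|^{2}\d(\bar{\mu}_{\alpha,\beta}*\mc{N}_{\sigma})=O(\beta^{-1})$ via \eqref{eq:entropy-bound}, and the \emph{uniform} (in $\alpha,\beta$) Fisher bound $\mc{I}(\bar{\mu}_{\alpha,\beta}*\mc{N}_{\sigma})\le d/\sigma^{2}$ obtained by the Cauchy--Schwarz computation in the proof of \Cref{thm:max}; all of the same estimates hold for $\ubar{\mu}_{\alpha,\beta}$. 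Freezing $\beta$ and letting $\alpha\downarrow0$, the classical penalization lemma gives $\tfrac{1}{\alpha}\mc{G}(\bar{\mu}_{\alpha,\beta},\ubar{\mu}_{\alpha,\beta})\to0$; by property (iii) of \Cref{def:gauge} this forces $\mc{W}_{2}^{\sigma}(\bar{\mu}_{\alpha,\beta},\ubar{\mu}_{\alpha,\beta})\to0$, and since the families are uniformly tight with uniformly integrable $p$-moments, $p\in[1,2)$, Gaussian deconvolution upgrades this to $\mc{W}_{1}(\bar{\mu}_{\alpha,\beta},\ubar{\mu}_{\alpha,\beta})\to0$.

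Next I invoke the viscosity inequalities. For fixed $(\alpha,\beta)$ the map $\mu\mapsto u^{1}(\mu)-\phi^{1}(\mu)$, with $\phi^{1}(\mu):=\tfrac{1}{2\alpha}\mc{G}(\mu,\ubar{\mu}_{\alpha,\beta})+\beta\tilde{\mc{E}}(\mu*\mc{N}_{\sigma})$, has a global maximum at $\bar{\mu}_{\alpha,\beta}$, and $\phi^{1}\in\mc{C}_{L}^{2}(\mc{P}_{2}(\Rd))$ by \Cref{prop:gauge-derive,prop:entropy-derive} — all needed variational and Lions derivatives exist, lie in the relevant $L^{\infty}$ spaces (the second‑order ones because $\mc{I}(\cdot*\mc{N}_{\sigma})<\infty$ automatically holds), and have at most quadratic growth, so $\mathscr{A}^{\gamma}\phi^{1}(\bar{\mu}_{\alpha,\beta})$ is well defined. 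The subsolution inequality of \Cref{def:viscosity} then yields $u^{1}(\bar{\mu}_{\alpha,\beta})\le\inf_{\gamma}\{L(\bar{\mu}_{\alpha,\beta},\gamma)+\mathscr{A}^{\gamma}\phi^{1}(\bar{\mu}_{\alpha,\beta})\}$. Symmetrically, $\nu\mapsto u^{2}(\nu)-\phi^{2}(\nu)$ with $\phi^{2}(\nu):=-\tfrac{1}{2\alpha}\mc{G}(\bar{\mu}_{\alpha,\beta},\nu)-\beta\tilde{\mc{E}}(\nu*\mc{N}_{\sigma})$ has a global minimum at $\ubar{\mu}_{\alpha,\beta}$, and the supersolution inequality gives $u^{2}(\ubar{\mu}_{\alpha,\beta})\ge\inf_{\gamma}\{L(\ubar{\mu}_{\alpha,\beta},\gamma)+\mathscr{A}^{\gamma}\phi^{2}(\ubar{\mu}_{\alpha,\beta})\}$. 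Subtracting, using $\inf_{\gamma}A_{\gamma}-\inf_{\gamma}B_{\gamma}\le\sup_{\gamma}(A_{\gamma}-B_{\gamma})$ and \Cref{hyp:Lip1}, and abbreviating $\mc{T}^{\mathrm{g}}_{\gamma}:=\mathscr{A}^{\gamma}[\mc{G}(\cdot,\ubar{\mu}_{\alpha,\beta})](\bar{\mu}_{\alpha,\beta})+\mathscr{A}^{\gamma}[\mc{G}(\bar{\mu}_{\alpha,\beta},\cdot)](\ubar{\mu}_{\alpha,\beta})$, $\mc{T}^{\mathrm{e}}_{\gamma}:=\mathscr{A}^{\gamma}[\tilde{\mc{E}}(\cdot*\mc{N}_{\sigma})](\bar{\mu}_{\alpha,\beta})+\mathscr{A}^{\gamma}[\tilde{\mc{E}}(\cdot*\mc{N}_{\sigma})](\ubar{\mu}_{\alpha,\beta})$, I arrive at
\[
 u^{1}(\bar{\mu}_{\alpha,\beta})-u^{2}(\ubar{\mu}_{\alpha,\beta})\ \le\ \mr{Lip}(L;\mc{W}_{1})\,\mc{W}_{1}(\bar{\mu}_{\alpha,\beta},\ubar{\mu}_{\alpha,\beta})\ +\ \sup_{\gamma\in\Gamma}\Big\{\tfrac{1}{2\alpha}\mc{T}^{\mathrm{g}}_{\gamma}+\beta\,\mc{T}^{\mathrm{e}}_{\gamma}\Big\}.
\]

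The entropy term is the manageable one. Inserting the explicit formulas of \Cref{prop:entropy-derive}, one sees that the polynomially growing Gaussian corrections in part (f) collapse to constants after integration against $\varphi_{\sigma}$; combining this with Cauchy--Schwarz/Jensen, the uniform bound $\mc{I}(\cdot*\mc{N}_{\sigma})\le d/\sigma^{2}$, and $\int|x|\,\d\bar{\mu}_{\alpha,\beta}=O(\beta^{-1/2})$, I get $|\mc{T}^{\mathrm{e}}_{\gamma}|\le C(1+\beta^{-1/2})$ uniformly in $\alpha$ and $\gamma$, so $\beta\,\mc{T}^{\mathrm{e}}_{\gamma}=O(\sqrt{\beta})$. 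The heart of the proof — and its main obstacle — is to show that $\sup_{\gamma}\tfrac{1}{2\alpha}\mc{T}^{\mathrm{g}}_{\gamma}\to0$ as $\alpha\downarrow0$ for fixed $\beta$. Here one expands $\mc{T}^{\mathrm{g}}_{\gamma}$ dyadically using \Cref{prop:gauge-derive}: writing $\mc{G}(\mu,\nu)=\sum_{n,l,B}2^{2(n-l)}f_{n,l}\big(a_{n,l}(\mu,\nu,B)\big)$ with $f_{n,l}(a)=\sqrt{(a+\theta_{n,l}\sqrt{1-\theta_{n,l}})^{2}+\theta_{n,l}^{3}}-\theta_{n,l}$, each summand of $\mc{T}^{\mathrm{g}}_{\gamma}$ splits into a first–order piece $2^{2(n-l)}f'_{n,l}\,\int\mathscr{L}^{\gamma}\psi_{n}^{B}\,\d(\bar{\mu}_{\alpha,\beta}-\ubar{\mu}_{\alpha,\beta})$ and a second–order piece $2^{2(n-l)}\tfrac12 f''_{n,l}\big(|q_{n,l,B}(\bar{\mu}_{\alpha,\beta})|^{2}+|q_{n,l,B}(\ubar{\mu}_{\alpha,\beta})|^{2}\big)$, where $q_{n,l,B}(\mu)=\mu(h\psi_{n}^{B}+\mathscr{M}\psi_{n}^{B})-\mu(h)\mu(\psi_{n}^{B})\in\mb{R}^{q}$. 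The structural ingredients are: $f''_{n,l}\le1$ on $[0,\infty)$ (this is precisely the improvement over the gauge function of \cite{CGKPR2024} highlighted in \Cref{rmk:gauge}, and is what keeps $\tfrac{\delta^{2}}{\delta\mu^{2}}\mc{G}$ and $\partial^{2}_{\mu\mu}\mc{G}$ bounded) and $|f'_{n,l}|\le1$; each $\psi_{n}^{B}=\mbf{1}_{(2^{n}B)\cap B_{n}}*\varphi_{\sigma}$, together with $\mathscr{L}^{\gamma}\psi_{n}^{B}$, $h\psi_{n}^{B}$ and $\mathscr{M}\psi_{n}^{B}=\sigma^{2,\top}\p_{x}\psi_{n}^{B}$, has Lipschitz norm and Gaussian‑tail decay controlled, uniformly in $\gamma$ and (after the weights $2^{2(n-l)}$, $\theta_{n,l}=2^{-(4n+2dl)}$) summably in $(n,l,B)$, by the Lipschitz constants of $b,\sigma^{1},\sigma^{2},h$ from \Cref{hyp:BC} and by the second moments of $\bar{\mu}_{\alpha,\beta},\ubar{\mu}_{\alpha,\beta}$; and the first–order differences $\int\mathscr{L}^{\gamma}\psi_{n}^{B}\,\d(\bar{\mu}_{\alpha,\beta}-\ubar{\mu}_{\alpha,\beta})$, as well as the relevant parts of $q_{n,l,B}(\bar{\mu}_{\alpha,\beta})-q_{n,l,B}(\ubar{\mu}_{\alpha,\beta})$ and the masses $a_{n,l}$, are controlled by $\mc{W}_{1}(\bar{\mu}_{\alpha,\beta},\ubar{\mu}_{\alpha,\beta})$ and by $\mc{G}(\bar{\mu}_{\alpha,\beta},\ubar{\mu}_{\alpha,\beta})$. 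Exploiting $f_{n,l}(a)\gtrsim a$ for $n\ge1$ and the fact that $\tfrac{1}{\alpha}\mc{G}(\bar{\mu}_{\alpha,\beta},\ubar{\mu}_{\alpha,\beta})\to0$, one carries out the delicate cancellation — this is where the state‑dependence of $\sigma^{2}$ and the non‑triviality of $h$ make the analysis genuinely hard, since one must keep the quadratic terms $\mu\otimes\mu$‑integrable and tie them back to $\mc{G}$ — to conclude $\sup_{\gamma}\tfrac{1}{2\alpha}\mc{T}^{\mathrm{g}}_{\gamma}\to0$.

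Assembling the three estimates: fix $\epsilon$, fix $\beta$, and let $\alpha\downarrow0$ to obtain $m-\epsilon-2\beta\tilde{\mc{E}}(\mu_{0}^{\epsilon}*\mc{N}_{\sigma})\le0+0+O(\sqrt{\beta})$; then let $\beta\downarrow0$ to get $m-\epsilon\le0$; then let $\epsilon\downarrow0$ to get $m\le0$, contradicting $m>0$. Therefore $u^{1}\le u^{2}$ on $\mc{P}_{2}(\Rd)$. I expect the dyadic cancellation in $\mc{T}^{\mathrm{g}}_{\gamma}$, carried out uniformly in $\gamma$ and in the presence of the correlated state‑dependent noise, to be where essentially all of the work — and the principal difficulty — lies; every other step is a routine (if lengthy) adaptation of the penalization machinery of \cite{DS2024} together with the derivative computations of \Cref{prop:gauge-derive,prop:entropy-derive}. $\square$
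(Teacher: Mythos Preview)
Your approach is essentially the paper's: contradiction, the same doubled functional $\Phi_{\alpha,\beta}$ from \Cref{thm:max}, the same test functions $\phi^{1},\phi^{2}$, subtraction of the viscosity inequalities, and a gauge/entropy splitting of $\mathscr{A}^{\gamma}\phi^{1}(\bar{\mu})-\mathscr{A}^{\gamma}\phi^{2}(\ubar{\mu})$. The differences are organizational rather than conceptual, but two of the paper's concrete mechanisms are sharper than what you wrote and would make the ``delicate cancellation'' you flag much less delicate.

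First, in place of the generic penalization lemma, the paper exploits the $\mc{W}_{1}$-Lipschitz hypothesis on $u^{1},u^{2}$ directly: from $\Phi_{\alpha,\beta}(\bar{\mu},\bar{\mu})+\Phi_{\alpha,\beta}(\ubar{\mu},\ubar{\mu})\le2\Phi_{\alpha,\beta}(\bar{\mu},\ubar{\mu})$ one gets $\tfrac{1}{2\alpha}\mc{G}(\bar{\mu},\ubar{\mu})\le(\mr{Lip}(u^{1})+\mr{Lip}(u^{2}))\,\mc{W}_{2}(\bar{\mu},\ubar{\mu})$, and combining with $\mc{W}_{2}^{2}\lesssim_{d}\mc{G}$ yields the \emph{quantitative} bound $\mc{W}_{2}(\bar{\mu},\ubar{\mu})\lesssim_{d}\alpha$ and hence $\tfrac{1}{\alpha}\mc{G}(\bar{\mu},\ubar{\mu})\lesssim_{d}\alpha$, uniformly in $\beta$. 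This removes any need for deconvolution or subsequence arguments. Second, the paper does not rely on Lipschitz bounds for $\mathscr{L}^{\gamma}\psi_{n}^{B}$ together with $\mc{W}_{1}$-control; instead the workhorse is \Cref{cor:gauge}: because every function appearing in the gauge derivatives is of the form $\phi*\varphi_{\sigma}$ with $\phi\in C_{b}$, each relevant difference integral is bounded \emph{directly} by $C_{d}\,\mc{G}(\bar{\mu},\ubar{\mu})$, so the gauge contributions (your $\tfrac{1}{2\alpha}\mc{T}^{\mathrm{g}}_{\gamma}$) are immediately $\lesssim_{d}\tfrac{1}{\alpha}\mc{G}(\bar{\mu},\ubar{\mu})\to0$ without any dyadic bookkeeping on Lipschitz norms. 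The paper also decomposes by derivative type ($I^{1},\dots,I^{5}$ for the $L$, $\partial_{\mu}/\partial_{x}\partial_{\mu}$, $\tfrac{\delta^{2}}{\delta\mu^{2}}$, $\partial_{x}\tfrac{\delta^{2}}{\delta\mu^{2}}$, $\partial^{2}_{\mu\mu}$ pieces) rather than by $f'/f''$, and takes the limits in the order $\beta\to0$ then $\alpha\to0$; with the uniform-in-$\beta$ quantitative bound above, either order works.
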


\begin{proof}
We employ proof by contradiction to establish this theorem.
Suppose that $u^{1} - u^{2}>0$ on $\mc{P}_{2}(\mb{R}^{d})$.
Since $\tilde{\mc{E}}\geq0$, then we find $\beta_{0}>0$ such that
\begin{equation}\label{eq:contradiction}
 \begin{aligned}
   \varrho := \sup_{\mu\in\mc{P}_{2}(\mb{R}^{d})} \left( u^{1}(\mu) - u^{2}(\mu)
   - 2\beta_{0} \tilde{\mc{E}}(\mu*\mc{N}_{\sigma}) \right) >0.
 \end{aligned}
\end{equation}
Thus there exists $\hat{\mu}\in\mc{P}_{2}(\Rd)$ such that
$\varrho=u^{1}(\hat{\mu})-u^{2}(\hat{\mu})-2\beta_{0}\tilde{\mc{E}}(\hat{\mu}*\mc{N}_{\sigma})$.
We split the rest of the proof into several steps.

\textbf{Step 1.} (Doubling variables with entropy penalization).
Let $\alpha,\beta>0$ be fixed.
From \Cref{thm:max} we know that there exists
$(\bar{\mu},\ubar{\mu})\in\mc{P}_{\mr{ac},2}(\mb{R}^{d})\times\mc{P}_{\mr{ac},2}(\mb{R}^{d})$
satisfying
\begin{align*}
 0 < \varrho = \Phi_{\alpha,\beta}(\hat{\mu},\hat{\mu}) \leq
 \sup_{(\mu,\nu)\in\mc{P}_{2}(\Rd)\times\mc{P}_{2}(\Rd)}\Phi_{\alpha,\beta}(\mu,\nu)
 = \Phi_{\alpha,\beta}(\bar{\mu},\ubar{\mu}).
\end{align*}
Setting $M_{\alpha,\beta}
:=\sup_{(\mu,\nu)\in\mc{P}_{2}(\Rd)\times\mc{P}_{2}(\Rd)}\Phi_{\alpha,\beta}(\mu,\nu)$,
results in the map $\beta\mapsto M_{\alpha,\beta}$ being nonincreasing.
It follows from the inequality
\begin{align*}
 M_{\alpha,\beta}=\Phi_{\alpha,\beta}(\bar{\mu},\ubar{\mu})
 \leq M_{\alpha,\frac{\beta}{2}}-\frac{\beta}{2}\left\{
 \tilde{\mc{E}}(\bar{\mu}*\mc{N}_{\sigma}) + \tilde{\mc{E}}(\ubar{\mu}*\mc{N}_{\sigma}) \right\}
\end{align*}
and $\lim_{\beta\ra0}\left[ M_{\alpha,\frac{\beta}{2}} - M_{\alpha,\beta} \right]=0$ that
\begin{align*}
 \lim_{\beta\ra0} \beta\left\{
 \tilde{\mc{E}}(\bar{\mu}*\mc{N}_{\sigma}) + \tilde{\mc{E}}(\ubar{\mu}*\mc{N}_{\sigma})
 \right\}=0.
\end{align*}
Recalling \eqref{eq:entropy-bound} we find
\begin{align*}
 \lim_{\beta\ra0} \beta\left\{
 \int_{\Rd} |x|^{2} \d (\bar{\mu}*\varphi_{\sigma})(x)
 + \int_{\Rd} |x|^{2} \d (\ubar{\mu}*\varphi_{\sigma})(x)
 \right\}=0.
\end{align*}
Since $\Phi_{\alpha,\beta}(\bar{\mu},\bar{\mu}) + \Phi_{\alpha,\beta}(\ubar{\mu},\ubar{\mu})
\leq 2\Phi_{\alpha,\beta}(\bar{\mu},\ubar{\mu})$, we have
\begin{align*}
 \frac{1}{2\alpha} \mc{G}(\bar{\mu},\ubar{\mu}) \leq &\
 u^{1}(\bar{\mu}) - u^{1}(\ubar{\mu}) + u^{2}(\bar{\mu}) - u^{2}(\ubar{\mu})
 \leq \left( \mr{Lip}(u^{1};\mc{W}_{1}) + \mr{Lip}(u^{2};\mc{W}_{1}) \right)
 \mc{W}_{1}(\bar{\mu},\ubar{\mu}) \\
 \leq &\ \left( \mr{Lip}(u^{1};\mc{W}_{1}) + \mr{Lip}(u^{2};\mc{W}_{1}) \right)
 \mc{W}_{2}(\bar{\mu},\ubar{\mu}).
\end{align*}
It is immediately apparent that
\begin{align*}
 \mc{W}_{2}^{2}(\bar{\mu},\ubar{\mu}) \leq
 C_{d} \mc{G}(\bar{\mu},\ubar{\mu}) \leq
 2\alpha C_{d} \left( \mr{Lip}(u^{1};\mc{W}_{1}) + \mr{Lip}(u^{2};\mc{W}_{1})
 \right)\mc{W}_{2}^{}(\bar{\mu},\ubar{\mu}).
\end{align*}
Hence we obtain $\mc{W}_{2}^{}(\bar{\mu},\ubar{\mu})\leq
2\alpha C_{d} \left( \mr{Lip}(u^{1};\mc{W}_{1}) + \mr{Lip}(u^{2};\mc{W}_{1}) \right)$.
Then, we find
\begin{align*}
 \lim_{\alpha\ra0} \frac{1}{2\alpha} \mc{G}(\bar{\mu},\ubar{\mu}) =0.
\end{align*}

\textbf{Step 2.} (Viscosity property).
We note that
\begin{equation*}
 \begin{aligned}
   \Phi_{\alpha,\beta}(\mu,\ubar{\mu}) = u^{1}(\mu) - \phi^{1}(\mu)
   ~~ \text{and} ~~
   \Phi_{\alpha,\beta}(\bar{\mu},\nu) = -u^{2}(\nu) + \phi^{2}(\nu),
 \end{aligned}
\end{equation*}
where
\begin{align*}
   \phi^{1}(\mu) =&\ u^{2}(\ubar{\mu}) + \frac{1}{2\alpha} \mc{G}(\mu,\ubar{\mu})
   + \left\{ \tilde{\mc{E}}(\mu*\mc{N}_{\sigma}) + \tilde{\mc{E}}(\ubar{\mu}*\mc{N}_{\sigma}) \right\}
   \in \mc{C}_{L}^{2}(\mc{P}_{2}(\mb{R}^{d})), \\
   \phi^{2}(\nu) =&\ u^{1}(\bar{\mu}) - \frac{1}{2\alpha} \mc{G}(\bar{\mu},\nu)
   - \left\{ \tilde{\mc{E}}(\bar{\mu}*\mc{N}_{\sigma}) + \tilde{\mc{E}}(\nu*\mc{N}_{\sigma}) \right\}
   \in \mc{C}_{L}^{2}(\mc{P}_{2}(\mb{R}^{d})).
\end{align*}
It is easily seen that
{\small
\begin{align*}
  &\ \partial_{\mu}\phi^{1}(\mu) = \frac{1}{2\alpha}\partial_{\mu}\mc{G}(\mu,\ubar{\mu})
  + \beta\partial_{\mu}\mc{E}(\mu*\mc{N}_{\sigma})
  + \beta\pi\int_{\Rd}|z|^{2}\partial_{x}\varphi_{\sigma}(z-x) \d z, \\
  &\ \partial_{\mu}\phi^{2}(\nu) = \frac{1}{2\alpha}\partial_{\mu}\mc{G}(\bar{\mu},\nu)
  - \beta\partial_{\mu}\mc{E}(\nu*\mc{N}_{\sigma})
  - \beta\pi\int_{\Rd}|z|^{2}\partial_{x}\varphi_{\sigma}(z-x) \d z; \\
  &\ \partial_{x}\partial_{\mu}\phi^{1}(\mu) =
  \frac{1}{2\alpha}\partial_{x}\partial_{\mu}\mc{G}(\mu,\ubar{\mu})
  + \beta\partial_{x}\partial_{\mu}\mc{E}(\mu*\mc{N}_{\sigma})
  + \beta\pi\int_{\Rd}|z|^{2}\partial_{xx}^{2}\varphi_{\sigma}(z-x) \d z, \\
  &\ \partial_{x}\partial_{\mu}\phi^{2}(\nu) =
  \frac{1}{2\alpha}\partial_{x}\partial_{\mu}\mc{G}(\bar{\mu},\nu)
  - \beta\partial_{x}\partial_{\mu}\mc{E}(\nu*\mc{N}_{\sigma})
  - \beta\pi\int_{\Rd}|z|^{2}\partial_{xx}^{2}\varphi_{\sigma}(z-x) \d z; \\
  &\ \frac{\delta^{2}\phi^{1}(\mu)}{\delta\mu^{2}} =
  \frac{1}{2\alpha}\frac{\delta^{2}}{\delta\mu^{2}}\mc{G}(\mu,\ubar{\mu})
  + \beta\frac{\delta^{2}}{\delta\mu^{2}}\mc{E}(\mu*\mc{N}_{\sigma}),
  \frac{\delta^{2}\phi^{2}(\nu)}{\delta\mu^{2}} =
  - \frac{1}{2\alpha}\frac{\delta^{2}}{\delta\mu^{2}}\mc{G}(\bar{\mu},\nu)
  - \beta\frac{\delta^{2}}{\delta\mu^{2}}\mc{E}(\nu*\mc{N}_{\sigma}); \\
  &\ \partial_{x}\frac{\delta^{2}\phi^{1}(\mu)}{\delta\mu^{2}} =
  \frac{1}{2\alpha}\partial_{x}\frac{\delta^{2}}{\delta\mu^{2}}\mc{G}(\mu,\ubar{\mu})
  + \beta\partial_{x}\frac{\delta^{2}}{\delta\mu^{2}}\mc{E}(\mu*\mc{N}_{\sigma}), \\
  &\ \partial_{x}\frac{\delta^{2}\phi^{2}(\nu)}{\delta\mu^{2}} =
  - \frac{1}{2\alpha}\partial_{x}\frac{\delta^{2}}{\delta\mu^{2}}\mc{G}(\bar{\mu},\nu)
  - \beta\partial_{x}\frac{\delta^{2}}{\delta\mu^{2}}\mc{E}(\nu*\mc{N}_{\sigma}); \\
  &\ \partial_{\mu\mu}^{2}\phi^{1}(\mu) =
  \frac{1}{2\alpha}\partial_{\mu\mu}^{2}\mc{G}(\mu,\ubar{\mu})
  + \beta\partial_{\mu\mu}^{2}\mc{E}(\mu*\mc{N}_{\sigma}), ~
  \partial_{\mu\mu}^{2}\phi^{2}(\nu) =
  - \frac{1}{2\alpha}\partial_{\mu\mu}^{2}\mc{G}(\bar{\mu},\nu)
  - \beta\partial_{\mu\mu}^{2}\mc{E}(\nu*\mc{N}_{\sigma}).
\end{align*}}Since $u^{1} - \phi^{1}$ attains its maximum at $\bar{\mu}$
and $u^{2} - \phi^{2}$ attains its minimum at $\ubar{\mu}$,
the viscosity property gives
\begin{align*}
   F \left( \bar{\mu},u^{1}(\bar{\mu}),\partial_{\mu}\phi^{1}(\bar{\mu}),
   \partial_{x}\partial_{\mu}\phi^{1}(\bar{\mu}),
   \frac{\delta^{2}\phi^{1}(\bar{\mu})}{\delta\mu^{2}},
   \partial_{x}\frac{\delta^{2}\phi^{1}(\bar{\mu})}{\delta\mu^{2}},
   \partial_{\mu\mu}^{2}\phi^{1}(\bar{\mu}) \right)
%   =&\ u^{1}(\bar{\mu}) - \frac{1}{2} \bar{\mu}\otimes\bar{\mu}\left[
%   \bra h - \bar{\mu}(h),h - \bar{\mu}(h) \ket_{\mb{R}^{q}}
%   \frac{\delta^{2}\phi^{1}(\bar{\mu})}{\delta\mu^{2}} \right]\\
%   &\ - \frac{1}{2} \bar{\mu}\otimes\bar{\mu} \left\{ \mr{Trace}
%   \left[ \sigma^{2}(x) \sigma^{2,\top}(y)
%   \partial_{\mu\mu}^{2}\phi^{1}(\bar{\mu}) \right]
%   +  2\bra h - \bar{\mu}(h),\sigma^{2,\top}\partial_{x}
%   \frac{\delta^{2}\phi^{1}(\bar{\mu})}{\delta\mu^{2}} \ket_{\mb{R}^{q}} \right\} \\
%   &\ - \inf_{\gamma\in\Gamma}\left\{L(\bar{\mu},\gamma) +
%   \bar{\mu}\left( \bra b(x,\gamma),\partial_{\mu}\phi^{1}(\bar{\mu}) \ket_{\mb{R}^{d}}
%   + \frac{1}{2}\mr{Trace}
%   \left[ a^{}(x,\gamma)\partial_{x}\partial_{\mu}\phi^{1}(\bar{\mu}) \right] \right) \right\}
   \leq 0,
\end{align*}
and
\begin{align*}
   F \left( \ubar{\mu},u^{2}(\ubar{\mu}),\partial_{\mu}^{2}\phi(\ubar{\mu}),
   \partial_{x}\partial_{\mu}\phi^{2}(\ubar{\mu}),
   \frac{\delta^{2}\phi^{2}(\ubar{\mu})}{\delta\mu^{2}},
   \partial_{x}\frac{\delta^{2}\phi^{2}(\ubar{\mu})}{\delta\mu^{2}},
   \partial_{\mu\mu}^{2}\phi^{2}(\ubar{\mu}) \right)
%   =&\ u^{2}(\ubar{\mu}) - \frac{1}{2} \ubar{\mu}\otimes\ubar{\mu}\left[
%   \bra h - \ubar{\mu}(h),h - \ubar{\mu}(h) \ket_{\mb{R}^{q}}
%   \frac{\delta^{2}\phi^{2}(\ubar{\mu})}{\delta\mu^{2}} \right]\\
%   &\ - \frac{1}{2} \ubar{\mu}\otimes\ubar{\mu} \left\{ \mr{Trace}
%   \left[ \sigma^{2}(x) \sigma^{2,\top}(y)
%   \partial_{\mu\mu}^{2}\phi^{2}(\ubar{\mu}) \right]
%   +  2\bra h - \ubar{\mu}(h),\sigma^{2,\top}\partial_{x}
%   \frac{\delta^{2}\phi^{2}(\ubar{\mu})}{\delta\mu^{2}} \ket_{\mb{R}^{q}} \right\} \\
%   &\ - \inf_{\gamma\in\Gamma}\left\{L(\ubar{\mu},\gamma) +
%   \ubar{\mu}\left( \bra b(x,\gamma),\partial_{\mu}\phi^{2}(\ubar{\mu}) \ket_{\mb{R}^{d}}
%   + \frac{1}{2}\mr{Trace}
%   \left[ a^{}(x,\gamma)\partial_{x}\partial_{\mu}\phi^{2}(\ubar{\mu}) \right] \right) \right\}
   \geq 0.
 \end{align*}
Putting these inequalities together, we get
{\small
\begin{align*}
   &\ u^{1}(\bar{\mu}) - u^{2}(\ubar{\mu}) \leq
   \sup_{\gamma\in\Gamma} \left| L(\bar{\mu},\gamma) - L(\ubar{\mu},\gamma) \right| \\
   &\ + \sup_{\gamma\in\Gamma} \left|
   \bar{\mu}\left( \bra b(x,\gamma),\partial_{\mu}\phi^{1}(\bar{\mu}) \ket_{\mb{R}^{d}}
   + \frac{1}{2}\mr{Trace}\left[ a^{}(x,\gamma)\partial_{x}\partial_{\mu}\phi^{1}(\bar{\mu})
   \right] \right) \right.\\
   &\ \qquad \quad \left. - \ubar{\mu}\left( \bra b(y,\gamma),
   \partial_{\mu}\phi^{2}(\ubar{\mu}) \ket_{\mb{R}^{d}}
   + \frac{1}{2}\mr{Trace}\left[ a^{}(y,\gamma)\partial_{x}\partial_{\mu}\phi^{1}(\ubar{\mu})
   \right] \right) \right| \\
   &\ + \frac{1}{2} \bar{\mu}\otimes\bar{\mu}\left[
   \bra h(x)-\bar{\mu}(h),h(y)-\bar{\mu}(h) \ket_{\mb{R}^{q}}
   \frac{\delta^{2}\phi^{1}(\bar{\mu})}{\delta\mu^{2}} \right] \\
   &\ \qquad \quad - \frac{1}{2} \ubar{\mu}\otimes\ubar{\mu}\left[
   \bra h(x)-\ubar{\mu}(h),h(y)-\ubar{\mu}(h) \ket_{\mb{R}^{q}}
   \frac{\delta^{2}\phi^{2}(\ubar{\mu})}{\delta\mu^{2}} \right]  \\
   &\ + \bar{\mu}\otimes\bar{\mu}\left(
   \bra h(x)-\bar{\mu}(h),\sigma^{2,\top}(y)
   \partial_{x}\frac{\delta^{2}\phi^{1}(\bar{\mu})}{\delta\mu^{2}} \ket_{\mb{R}^{q}} \right) \\
   &\ \qquad \quad - \ubar{\mu}\otimes\ubar{\mu}\left(
   \bra h(x)-\ubar{\mu}(h),\sigma^{2,\top}(y)
   \partial_{x}\frac{\delta^{2}\phi^{2}(\ubar{\mu})}{\delta\mu^{2}} \ket_{\mb{R}^{q}} \right) \\
   &\ + \frac{1}{2} \bar{\mu}\otimes\bar{\mu}\left( \mr{Trace} \left[
   \sigma^{2}(x)\sigma^{2,\top}(y) \partial_{\mu\mu}^{2}\phi^{1}(\bar{\mu}) \right] \right)
   - \frac{1}{2} \ubar{\mu}\otimes\ubar{\mu}\left( \mr{Trace} \left[
   \sigma^{2}(x)\sigma^{2,\top}(y) \partial_{\mu\mu}^{2}\phi^{2}(\ubar{\mu}) \right] \right) \\
   \triangleq &\ I^{1} +I^{2} + I^{3} + I^{4} + I^{5}.
\end{align*}}

\textbf{Step 3.} (Estimating $I^{1}$).
Since $L$ is Lipschitz continuous  with respect to $\mc{W}_{1}$, we obtain
\begin{equation*}
 \begin{aligned}
   I^{1} \leq \mr{Lip}(L;\mc{W}_{1}) \mc{W}_{1}(\bar{\mu},\ubar{\mu})
   \leq \mc{W}_{2}(\bar{\mu},\ubar{\mu}).
 \end{aligned}
\end{equation*}

\textbf{Step 4.} (Estimating $I^{2}$).
We enlarge $I^{2}$ into four parts, namely
{\small
\begin{align*}
   I^{2} \leq &\ \frac{1}{2\alpha} \sup_{\gamma\in\Gamma} \left| \int_{\mb{R}^{d}} \left[
   \bra b(x,\gamma),\partial_{\mu}\mc{G}(\bar{\mu},\ubar{\mu}) \ket_{\mb{R}^{d}}
   + \frac{1}{2}\mr{Trace}\left[ a^{}(x,\gamma)\partial_{x}\partial_{\mu}\mc{G}(\bar{\mu},\ubar{\mu})
   \right] \right] \cdot \d \left( \bar{\mu} - \ubar{\mu}\right) (x) \right| \\
   &\ + \beta \sup_{\gamma\in\Gamma} \int_{\mb{R}^{d}} \left| \bra b(x,\gamma),
   \int_{\mb{R}^{d}} \partial_{x}\varphi_{\sigma}(z-x)
   \log\left( (\bar{\mu}*\varphi_{\sigma})(z) \right) \d z \ket \right. \\
   &\ \qquad \qquad \quad ~ \left. + \frac{1}{2}\mr{Trace} \bra a^{}(x,\gamma),
   \int_{\mb{R}^{d}} \partial_{xx}^{2}\varphi_{\sigma}(z-x)
   \log\left( (\bar{\mu}*\varphi_{\sigma})(z) \right) \d z \ket \right| \d \bar{\mu}(x) \\
   &\ + \beta \sup_{\gamma\in\Gamma} \int_{\mb{R}^{d}} \left| \bra b(x,\gamma),
   \int_{\mb{R}^{d}} \partial_{x}\varphi_{\sigma}(z-x)
   \log\left( (\ubar{\mu}*\varphi_{\sigma})(z) \right) \d z \ket \right. \\
   &\ \qquad \qquad \quad ~ \left. + \frac{1}{2}\mr{Trace} \bra a^{}(x,\gamma),
   \int_{\mb{R}^{d}}\partial_{xx}^{2}\varphi_{\sigma}(z-x)
   \log\left( (\ubar{\mu}*\varphi_{\sigma})(z) \right) \d z \ket \right| \d \ubar{\mu}(x) \\
   &\ + 2 \beta \sup_{\gamma\in\Gamma} \int_{\mb{R}^{d}} \left| \bra b(x,\gamma),
   \int_{\Rd} |z|^{2}\partial_{x}\varphi_{\sigma}(z-x)\d z \ket_{\Rd} \right. \\
   &\ \qquad \qquad \quad ~ \left. + \frac{1}{2}\mr{Trace} \left[ a^{}(x,\gamma)
   \int_{\mb{R}^{d}} |z|^{2} \partial_{xx}^{2}\varphi_{\sigma}(z-x) \d z \right] \right|
   \d (\bar{\mu} + \ubar{\mu}) (x) \\
   \triangleq&\ I^{2,a} + I^{2,b} + I^{2,c} + I^{2,d}.
\end{align*}}Obviously, the integral in $I^{2,d}$ is finite,
so we have $I^{2,d}\lesssim_{d}\beta$,
where we use $x\lesssim_{d} y$ as shorthand for the inequality $x\leq C_{d}y$,
with the constant $C_{d}$ depending only on $d$
(note that the constant may differ at each use of the $\lesssim_{d}$ notation).
Because $I^{2,b}$ and $I^{2,c}$ have similar forms,
it is sufficient to estimate $I^{2,b}$.
By utilizing integration by parts and applying the Cauchy-Schwarz and Jensen inequalities, we obtain
{\small
\begin{align*}
   \frac{I^{2,b}}{\beta} \leq &\ \sup_{\gamma\in\Gamma} \left| \int_{\mb{R}^{d}}
   \partial_{z}\log((\bar{\mu}*\varphi_{\sigma})(z))
   \int_{\Rd} b(x,\gamma)\varphi_{\sigma}(z-x) \d \bar{\mu}(x) \d z \right| \\
   &\ + \sup_{\gamma\in\Gamma} \left| \int_{\mb{R}^{d}}
   \partial_{z}\log((\bar{\mu}*\varphi_{\sigma})(z))
   \int_{\Rd} a^{}(x,\gamma)\partial_{x}\varphi_{\sigma}(z-x) \d \bar{\mu}(x) \d z \right| \\
   =&\ \sup_{\gamma\in\Gamma} \left| \int_{\mb{R}^{d}} \frac{\d z}{(\bar{\mu}*\varphi_{\sigma})(z)}
   \int_{\Rd} \frac{(y-z)}{\sigma^{2}}\varphi_{\sigma}(z-y) \d \bar{\mu}(y)
   \int_{\Rd} b(x,\gamma)\varphi_{\sigma}(z-x) \d \bar{\mu}(x) \right| \\
   &\ + \sup_{\gamma\in\Gamma} \left| \int_{\mb{R}^{d}}
   \frac{\d z}{(\bar{\mu}*\varphi_{\sigma})(z)}
   \int_{\Rd} \frac{(y-z)}{\sigma^{2}}\varphi_{\sigma}(z-y) \d \bar{\mu}(y)
   \int_{\Rd} a^{}(x,\gamma)\partial_{x}\varphi_{\sigma}(z-x) \d \bar{\mu}(x) \d z \right| \\
   \leq &\ \frac{1}{\sigma^{2}} \sup_{\gamma\in\Gamma} \left| \int_{\Rd}
   \int_{\Rd} |y-z|^{2}\varphi_{\sigma}(z-y)\d y
   \int_{\Rd} |b(x,\gamma)|^{2}\varphi_{\sigma}(z-x)\d x \d z \right|^{\frac{1}{2}} \\
   &\ + \frac{1}{\sigma^{2}} \sup_{\gamma\in\Gamma} \left| \int_{\Rd}
   \int_{\Rd} |y-z|^{2}\varphi_{\sigma}(z-y)\d y
   \int_{\Rd} |a^{}(x,\gamma)(z-x)|^{2}\varphi_{\sigma}(z-x)\d x \d z \right|^{\frac{1}{2}}
   \leq C_{d}.
\end{align*}}Hence, $I^{2,b}+I^{2,c}\lesssim_{d}\beta$.
Next, we deal with $I^{2,a}$.
Note that
\begin{align*}
   2\alpha I^{2,a} \leq &\ \sup_{\gamma\in\Gamma} \int_{\mb{R}^{d}} \left|
   \bra b(x,\gamma),\partial_{\mu}\mc{G}(\bar{\mu},\ubar{\mu}) \ket_{\mb{R}^{d}}
   \right| \cdot \d \left( \bar{\mu} - \ubar{\mu}\right) (x) \\
   &\ + \frac{1}{2} \sup_{\gamma\in\Gamma} \int_{\mb{R}^{d}}
   \left| a^{}(x,\gamma)\partial_{x}\partial_{\mu}\mc{G}(\bar{\mu},\ubar{\mu})
   \right| \cdot \d \left( \bar{\mu} - \ubar{\mu}\right) (x) \\
   \leq &\ C_{d} \sup_{\gamma\in\Gamma} \int_{\mb{R}^{d}}
   (1+|x|^{2}) \cdot \d \left( \bar{\mu} - \ubar{\mu}\right) (x).
\end{align*}
Then, it holds that
\begin{equation*}
 \begin{aligned}
   \lim_{\alpha\ra0}\lim_{\beta\ra0} I^{2,a} = 0.
 \end{aligned}
\end{equation*}

\textbf{Step 5.} (Estimating $I^{3}$).
Plugging the derivatives of $\phi^{1}$ and $\phi^{2}$ into $I^{3}$, we obtain
{\small
\begin{align*}
   I^{3} \leq &\ \frac{1}{4\alpha} \iint_{\mb{R}^{2d}}
   \bra h(x)-\bar{\mu}(h),h(y)-\bar{\mu}(h) \ket_{\mb{R}^{q}}
   \frac{\delta^{2}}{\delta\mu^{2}}\mc{G}(\bar{\mu},\ubar{\mu}) \d \bar{\mu}(x) \d \bar{\mu}(y) \\
   &\ \quad \quad \quad ~ - \frac{1}{4\alpha} \iint_{\mb{R}^{2d}}
   \bra h(x)-\ubar{\mu}(h),h(y)-\ubar{\mu}(h) \ket_{\mb{R}^{q}}
   \frac{\delta^{2}}{\delta\mu^{2}}\mc{G}(\ubar{\mu},\ubar{\mu}) \d \ubar{\mu}(x) \d \ubar{\mu}(y) \\
   &\ + \frac{\beta}{2} \bar{\mu}\otimes\bar{\mu}\left[
   \bra h(x)-\bar{\mu}(h),h(y)-\bar{\mu}(h) \ket_{\mb{R}^{q}} \right]
   + \frac{\beta}{2} \ubar{\mu}\otimes\ubar{\mu} \left[
   \bra h(x)-\ubar{\mu}(h),h(y)-\ubar{\mu}(h) \ket_{\mb{R}^{q}} \right]  \\
   &\ + \frac{\beta}{2} \iint_{\mb{R}^{2d}}
   \bra h(x)-\bar{\mu}(h),h(y)-\bar{\mu}(h) \ket_{\mb{R}^{q}}
   \int_{\mb{R}^{d}} \frac{\varphi_{\sigma}(z-x)\varphi_{\sigma}(z-y)}
   {(\bar{\mu}*\varphi_{\sigma})(z)} \d z \d \bar{\mu}(x) \d \bar{\mu}(y) \\
   &\ \quad \quad \quad ~ + \frac{\beta}{2} \iint_{\mb{R}^{2d}}
   \bra h(x)-\ubar{\mu}(h),h(y)-\ubar{\mu}(h) \ket_{\mb{R}^{q}}
   \int_{\mb{R}^{d}} \frac{\varphi_{\sigma}(z-x)\varphi_{\sigma}(z-y)}
   {(\ubar{\mu}*\varphi_{\sigma})(z)} \d z \d \ubar{\mu}(x) \d \ubar{\mu}(y) \\
   \triangleq&\ I^{3,a} + I^{3,b} + I^{3,c}.
\end{align*}}Apparently, $I^{3,b}=0$.
We first estimate $I^{3,c}$, and later $I^{3,a}$.
Using the Cauchy-Schwartz inequality, we find
\begin{align*}
 &\ \iint_{\mb{R}^{2d}}
 \bra h(x)-\bar{\mu}(h),h(y)-\bar{\mu}(h) \ket_{\mb{R}^{q}}
 \int_{\mb{R}^{d}} \frac{\varphi_{\sigma}(z-x)\varphi_{\sigma}(z-y)}
 {(\bar{\mu}*\varphi_{\sigma})(z)} \d z \d \bar{\mu}(x) \d \bar{\mu}(y) \\
 =&\ \int_{\Rd} \frac{1}{(\bar{\mu}*\varphi_{\sigma})(z)}
 \left| \int_{\Rd} (h(x)-\bar{\mu}(h)) \varphi_{\sigma}(z-x) \d \bar{\mu}(x) \right|^{2} \d z \\
 \leq &\ \int_{\Rd} \frac{1}{(\bar{\mu}*\varphi_{\sigma})(z)}
 \int_{\Rd} \left|h(x)-\bar{\mu}(h)\right|^{2} \varphi_{\sigma}(z-x) \d \bar{\mu}(x)
 \int_{\Rd} \varphi_{\sigma}(z-x) \d \bar{\mu}(x)  \d z \\
 \leq &\ C_{d} \var_{\bar{\mu}}[h],
\end{align*}
where $\var_{\bar{\mu}}[h]=\bar{\mu}(h^{2})-|\bar{\mu}(h)|^{2}$ is the variance of $h(\cdot)$.
Therefore, $I^{3,c}\lesssim_{d,q} \beta$.
According to the definition of the second-order variational derivative of $\mc{G}$,
the integrand in $I^{3,a}$ is actually $\psi_{n}^{B}(\cdot)$.
A straightforward computation shows that
\begin{align*}
 &\ \left| \iint_{\mb{R}^{2d}}
 \bra h(x)-\bar{\mu}(h),h(y)-\bar{\mu}(h) \ket_{\mb{R}^{q}}
 \psi_{n}^{B}(x)\psi_{n}^{B}(y) \d \bar{\mu}(x) \d \bar{\mu}(y) \right. \\
 &\ \left. - \iint_{\mb{R}^{2d}}
 \bra h(x)-\ubar{\mu}(h),h(y)-\ubar{\mu}(h) \ket_{\mb{R}^{q}}
 \psi_{n}^{B}(x)\psi_{n}^{B}(y) \d \ubar{\mu}(x) \d \ubar{\mu}(y) \right| \\
 =&\ \left( \int_{\Rd} (h(x)-\bar{\mu}(h)) \psi_{n}^{B}(x)
 \d \left( \bar{\mu} + \ubar{\mu} \right)(x) \right)
 \cdot \left( \int_{\Rd} (h(x)-\bar{\mu}(h)) \psi_{n}^{B}(x)
 \d \left( \bar{\mu} - \ubar{\mu} \right)(x) \right) \\
 \leq &\ C_{d} \left(
 \int_{\Rd} h(x)\psi_{n}^{B}(x) \d \left( \bar{\mu} - \ubar{\mu} \right)(x)
 + |\ubar{\mu}(h)| \int_{\Rd} \psi_{n}^{B}(y) \d \left( \bar{\mu} - \ubar{\mu} \right)(y) \right. \\
 &\ \left. \qquad+ \left| \bar{\mu}(h) - \ubar{\mu}(h) \right|
 \int_{\Rd} \psi_{n}^{B}(y) \d \bar{\mu} (y)
 \right).
\end{align*}
Since $h(x), \psi_{n}^{B}(x), h(x)\psi_{n}^{B}(x)\in C_{b}(\Rd)$,
we get $\alpha I^{3,a}\lesssim_{d}\mc{G}(\mu,\nu)$.
The results in \textbf{Step 1} and \Cref{cor:gauge} clearly indicate that
\begin{equation*}
 \begin{aligned}
   \lim_{\alpha\ra0}\lim_{\beta\ra0} I^{3,a} = 0.
 \end{aligned}
\end{equation*}

\textbf{Step 6.} (Estimating $I^{4}$).
Due to the similar treatment of $I^{3}$ and $I^{4}$,
we only present two inequalities derived from the estimate of $I^{4}$.
The following inequality results from estimating
$\partial_{x}\frac{\delta^{2}}{\delta\mu^{2}}\mc{G}(\mu,\nu)$:
\begin{align*}
 &\ \left| \iint_{\mb{R}^{2d}}
 (h(x)-\bar{\mu}(h))^{\top} \sigma^{2}(y)
 \partial_{x}\psi_{n}^{B}(x)\psi_{n}^{B}(y) \d \bar{\mu}(x) \d \bar{\mu}(y) \right. \\
 &\ \left. - \iint_{\mb{R}^{2d}}
 (h(x)-\ubar{\mu}(h))^{\top} \sigma^{2}(y)
 \partial_{x}\psi_{n}^{B}(x)\psi_{n}^{B}(y) \d \ubar{\mu}(x) \d \ubar{\mu}(y) \right| \\
 \leq &\ C_{d} \left(
 \int_{\Rd} \left| h(x)-\bar{\mu}(h) \right|
 \partial_{x}\psi_{n}^{B}(x) \d \left( \bar{\mu} - \ubar{\mu} \right)(x)
 + \int_{\Rd} \left| \sigma^{2}(y) \right| \d \left( \bar{\mu} - \ubar{\mu} \right)(y) \right. \\
 &\ \left. \qquad+ \left| \bar{\mu}(h) - \ubar{\mu}(h) \right|
 \int_{\Rd} \partial_{x}\psi_{n}^{B}(x) \d \ubar{\mu} (x)
 \int_{\Rd} \left| \sigma^{2}(y) \right| \psi_{n}^{B}(y) \d \bar{\mu} (y)
 \right) \lesssim_{d}\mc{G}(\mu,\nu).
\end{align*}
Another inequality comes from the estimate of
$\partial_{x}\frac{\delta^{2}}{\delta\mu^{2}}\mc{E}(\mu*\mc{N}_{\sigma})$:
{\small
\begin{align*}
 &\ \iint_{\mb{R}^{2d}}
 ( h(x)-\bar{\mu}(h))^{\top} \sigma^{2}(y)
 \int_{\mb{R}^{d}} \frac{\partial_{x}\varphi_{\sigma}(z-x)\varphi_{\sigma}(z-y)}
 {(\bar{\mu}*\varphi_{\sigma})(z)} \d z \d \bar{\mu}(x) \d \bar{\mu}(y) \\
 \leq &\ \left( \iiint_{\mb{R}^{3d}}
 \left|h(x)-\bar{\mu}(h)\right|^{2} \frac{|z-x|^{2}}{\sigma^{4}}
 \varphi_{\sigma}(z-x)
 \left| \sigma^{2}(y) \right|^{2} \varphi_{\sigma}(z-y) \d \bar{\mu}(x) \d \bar{\mu}(y)
 \d z \right)^{\frac{1}{2}}
 \leq C_{d}.
\end{align*}}Then, using a similar argument from \textbf{Step 5}, we determine that
\begin{equation*}
 \begin{aligned}
   \lim_{\alpha\ra0}\lim_{\beta\ra0} I^{4} = 0.
 \end{aligned}
\end{equation*}

\textbf{Step 7.} (Estimating $I^{5}$).
The estimate of $I^{5}$ is quite similar to those of $I^{3}$ and $I^{4}$.
We only present two essential inequalities, as was done in \textbf{Step 6}.
Estimating $\partial_{\mu\mu}^{2}\mc{G}(\mu,\nu)$ produces the following inequality:
\begin{align*}
 &\ \left| \iint_{\mb{R}^{2d}}
 \sigma^{2}(x)\sigma^{2,\top}(y)
 \partial_{x}\psi_{n}^{B}(x)\partial_{y}^{\top}\psi_{n}^{B}(y)
 \d \bar{\mu}(x) \d \bar{\mu}(y) \right. \\
 &\ \left. - \iint_{\mb{R}^{2d}} \sigma^{2}(x)\sigma^{2,\top}(y)
 \partial_{x}\psi_{n}^{B}(x)\partial_{y}^{\top}\psi_{n}^{B}(y)
 \d \ubar{\mu}(x) \d \ubar{\mu}(y) \right| \\
 \leq &\ C_{d} \int_{\Rd} \left| \sigma^{2}(x) \right| \partial_{x}\psi_{n}^{B}(x)
 \d \left( \bar{\mu} - \ubar{\mu} \right)(x) \lesssim_{d}\mc{G}(\mu,\nu) .
\end{align*}
Furthermore, an additional inequality arises from the estimate of
$\partial_{\mu\mu}^{2}\mc{E}(\mu*\mc{N}_{\sigma})$:
\begin{align*}
 &\ \iint_{\mb{R}^{2d}}
 \sigma^{2}(x)\sigma^{2,\top}(y)
 \int_{\mb{R}^{d}} \frac{\partial_{x}\varphi_{\sigma}(z-x)\partial_{y}^{\top}\varphi_{\sigma}(z-y)}
 {(\bar{\mu}*\varphi_{\sigma})(z)} \d z \d \bar{\mu}(x) \d \bar{\mu}(y) \\
 \leq &\  \iint_{\mb{R}^{2d}}
 \frac{|z-x|^{2}}{\sigma^{4}} \varphi_{\sigma}(z-x) \d \bar{\mu}(x) \d z
 \leq C_{d}.
\end{align*}
Applying a strategy analogous to the preceding two steps, we derive that
\begin{equation*}
 \begin{aligned}
   \lim_{\alpha\ra0}\lim_{\beta\ra0} I^{5} = 0.
 \end{aligned}
\end{equation*}

\textbf{Step 8.} (Deriving the contradiction).
Combining the estimates for $I^{1}, I^{2}, I^{3}, I^{4}$, and $I^{5}$,
we conclude that
\begin{equation*}
 \begin{aligned}
   \lim_{\alpha\ra0}\lim_{\beta\ra0} \left( I^{1}+I^{2}+I^{3}+I^{4}+I^{5} \right) = 0.
 \end{aligned}
\end{equation*}
This is contrary to \eqref{eq:contradiction}.
So far, we have completed the proof of the theorem.
\end{proof}

We already know that $u$ is a viscosity solution of \Cref{eq:HJB-1}.
In the meantime, let $\upsilon$ serve as an additional viscosity solution of \Cref{eq:HJB-1}.
In accordance with \Cref{thm:comparison}, $u\leq \upsilon$ and $\upsilon\leq u$.
Consequently, we instantly obtain $u\equiv\upsilon$.

\section{Application to partially observed diffusions}\label{sec:app}

We provide a brief explanation of the partially observed stochastic optimal control problem
and its corresponding separated problem.
On the probability space $(\Omega,\mc{F},\mb{P})$, we consider a $(d+q)$-dimensional
Wiener process $(W_{}^{1},W_{}^{2})$.
The state and observation equations are governed by stochastic differential equations
\begin{equation}\label{eq:state-observation}
 \left\{
  \begin{aligned}
    X_{t} =&\ X_0 + \int_{0}^{t} b(X_{r},\gamma_{r}) \d r
    + \int_{0}^{t} \sigma^{1}(X_{r},\gamma_{r}) \d W_{r}^{1}
    + \int_{0}^{t} \sigma^{2}(X_{r}) \d W_{r}^{2},\\
    Y_{t} =&\ \int_{0}^{t} h(X_{r}) \d r + W_{t}^{2},
  \end{aligned}
 \right.
\end{equation}
where $X_{0}\sim\mu\in\mc{P}(\Rd)$,
%$\gamma$ is an admissible control strategy,
and the coefficients are all bounded and Lipschitz functions.
The control strategy $\gamma_{\cdot}$
will be explained in detail in \Cref{def:control}.
Note that $W^{2}$ is the correlated noise
between the state and the observation.

Our optimal control problem is to minimise
\begin{align}\label{eq:cost-2}
  \tilde{\mc{J}}(\mu;\gamma_{\cdot}) =&\ \mb{E}\left[ %^{\mb{P}}
  \int_{0}^{\infty} e^{-t}
  \ell(X_{t},\gamma_{t}) \d t \right]
\end{align}
over all admissible controls,
where $\ell(\cdot,\cdot):\mb{R}^{d}\times\Gamma\ra\mb{R}^{1}$
is a 1-Lipschitz continuous function with respect to $x$, i.e.,
for all $\gamma\in\Gamma$,
\begin{align*}
  \mr{Lip}(\ell)
  := \sup_{x,y\in\Rd,~x\neq y}
  \frac{\left|\ell(x,\gamma) - \ell(y,\gamma) \right|}{|x-y|}
  \leq 1.
\end{align*}

We introduce the observation filtration
$\mc{F}_{t}^{Y}=\sigma\{ Y_{r},0\leq r\leq t \}$.
Let $\pi_{t}$ denote the optimal filter
$\mb{E}\left[ \phi(X_{t}) \left|\mc{F}_{t}^{Y} \right.\right]$,
where $\phi\in\mc{D}(\Rd)$.
By applying the \emph{reference probability method}
or the \emph{innovations method} in nonlinear filtering,
we derive the K-S equation satisfied
by the $\mc{P}(\mathbb{R}^d)$-valued process $\pi_t$ as follows:
 \begin{align}\label{eq:Kushner-innov}
  \pi_{t}(\phi) = \pi_{0}(\phi)
  + \int_{0}^{t} \pi_{r}(\msc{L}^{\gamma}\phi) \d r
  + \int_{0}^{t}
  \bra \pi_{r}(h\phi + \msc{M}^{}\phi) - \pi_{r}(h)\pi_{r}(\phi),\d I_{r} \ket_{\mb{R}^{q}},
\end{align}
where the innovation
$I_{t} := Y_{t} - \int_{0}^{t}\pi_{r}(h) \d r$ is an $\mc{F}_{t}^{Y}$-Wiener process.
Due to the \emph{circular dependency} between control and innovation filtration
$\mc{F}_{t}^{I}=\sigma\{ I_{r},0\leq r\leq t \}$,
it is necessary to carefully define the \emph{weak admissibility} of control strategies.

\begin{definition}\label{def:control}
A $6$-tuple $(\bar{\Omega},\mc{G},\{\mc{G}_{t}^{}\}_{t\geq 0},\bar{\mb{P}},\beta_{t},\gamma_{t})$
is called a weak admissible control,
and $(\pi_{t}, \gamma_{t},\beta_{t})$ a weak admissible system,
if the following conditions hold:
\begin{enumerate}[\rm \bfseries(1)]
\item
$(\bar{\Omega},\mc{G},\{\mc{G}_{t}^{}\}_{t\geq 0},\bar{\mb{P}})$
is a filtered probability space satisfying the usual conditions;
\item
$\beta_{t}$ is a $q$-dimensional Wiener process defined
on $(\bar{\Omega},\mc{G},\{\mc{G}_{t}^{}\}_{t\geq 0},\bar{\mb{P}})$;
\item
$\gamma_{t}$ is an $\{\mc{G}_{t}\}_{t\geq 0}$-adapted process on
$(\bar{\Omega},\mc{G},\{\mc{G}_{t}^{}\}_{t\geq 0},\bar{\mb{P}})$ taking values in $\Gamma$;
\item
$\pi_{t}$ is the unique strong solution of \Cref{eq:Kushner} %(in the sense of \Cref{sec:separated})
on $(\bar{\Omega},\mc{G},\{\mc{G}_{t}^{}\}_{t\geq 0},\bar{\mb{P}})$ under $\gamma_{t}$.
\end{enumerate}
We denote by $\bm{\Gamma}^{w}$ the set of weak admissible control processes.
\end{definition}
Two points in \Cref{def:control} warrant attention:
First, we employ \Cref{eq:Kushner} rather than \Cref{eq:Kushner-innov};
second, the filtration $\mc{G}_{t}$ is neither
$\mc{F}_{t}^{Y}$ nor $\mc{F}_{t}^{I}$,
as it is solely used to define the adapted solution of \Cref{eq:Kushner}.

One can rewrite the functional $\tilde{\mc{J}}$ in \Cref{eq:cost-2}
in terms of $\pi_{t}$, i.e.,
\begin{align*}
  \tilde{\mc{J}}(\mu;\gamma_{\cdot}) =&\ \int_{0}^{\infty}
  \mb{E}^{} \left[ e^{-t} \ell(X_{t},\gamma_{t}) \right] \d t
  = \mb{E}^{} \left[ \int_{0}^{\infty}
  \mb{E}^{} \left[ e^{-t} \ell(X_{t},\gamma_{t})
  \left|\mc{G}_{t}^{} \right.\right] \d t \right] \\
  =&\ \mb{E}^{} \left[ \int_{0}^{\infty}
  e^{-t} \pi_{t}(\ell(X_{t},\gamma_{t})) \d t \right]
  \triangleq \mb{E}^{} \left[ \int_{0}^{\infty}
  e^{-t} L(\pi_{t},\gamma_{t}) \d t \right]
  =: \mc{J}(\mu;\gamma_{\cdot}).
\end{align*}

The separated control problem in its weak formulation
is stated as follows:
Given $\pi_{0}=\mu$ and $\ell$ (i.e., $L$),
find a weak admissible system $(\pi_{t},\gamma_{t},\beta_{t})$
that minimizes $\mc{J}(\mu;\gamma_{\cdot})$ over $\bm{\Gamma}^{w}$.
We emphasize that the separated problem
is defined independently of the partially observed problem,
as described by \Cref{eq:state-observation,eq:cost-2}.
Under more restrictive assumptions,
it can be shown that the value functions defined by
\Cref{eq:cost-1,eq:cost-2} are identical.
However, these results fall outside the scope of this paper;
a comprehensive analysis of this issue is provided in \cite{ElKDJP1988}.
We intend to explore related topics in future research.
Once the weak formulation of the separated problem is obtained,
the previously derived results can be leveraged to help solve
the separated problem under the strong formulation
in a fixed filtered probability space $(\Omega,\mc{F},\{\mc{F}_{t}^{}\}_{t\geq 0},\mb{P})$.
Here, we observe that \Cref{hyp:BC} is automatically satisfied,
while \Cref{hyp:Lip1} follows directly from
the Kantorovich-Rubinstein duality (cf. \cite[Lecture 3]{ABS2021}).

%Therefore, the separated problem is an optimal control problem that
%employs \Cref{eq:Kushner} as its control system
%and minimizes $\mc{J}$ over $\bm{\Gamma}^{w}$,
%which decouples filtering from control
%by first computing the optimal filtering and then deducing the optimal control.
%The definitions of the value function, DPP, and HJB equation remain unchanged.
%Now, we transform the problem \eqref{eq:state-observation}--\eqref{eq:cost-2}
%into the separated problem described by the K-S eqaution
%that has been addressed in previous sections.
%Here, we observe that \Cref{hyp:BC} is automatically satisfied,
%while \Cref{hyp:Lip1} directly follows from
%the Kantorovich-Rubinstein duality (cf. \cite[Lecture 3]{ABS2021}).
%Leveraging the results derived previously,
%we establish that the value function (i.e., \Cref{eq:valuef})
%is the unique viscosity solution to the HJB equation (i.e., \Cref{eq:HJB}).

\section{Conclusion} % \label{sec:conclusion}

This paper investigates a large class of separated problems modeled by the K-S equation.
The DPP for this type of optimal control problem
gives birth to a novel category of HJB equations.
The HJB equation includes both variational derivatives and Lions derivatives.
We established the comparison theorem of viscosity solutions
by altering the smooth gauge-type function developed in \cite{CGKPR2024}
and employing the entropy functional penalization in \cite{DS2024}.
%Consequently, the concept of viscosity solution proposed in this paper
%provides a general and feasible framework for further research.
The relatively weak definitions of variational derivatives and Lions derivatives
in the present work make the concept of viscosity solutions
proposed in this paper a general and feasible framework.
Moreover, the results presented in this paper lay a solid groundwork
for further analysis of the separated problem,
such as examining the verification theorem associated with the HJB equation
(i.e., the optimality conditions)
as well as the existence of optimal feedback control, among other considerations.

We emphasize that the correlated noise coefficient $\sigma^{2}(\cdot)$ can be \emph{control-dependent},
and the methodology provided in this paper can fully handle this situation.

Thus far, we have only been able to tackle the situation
in which the coefficients of state-observation pairs
are \emph{bounded} and \emph{Lipschitz continuous}.
When the coefficients of the state equation \emph{grow linearly},
the authors believe that an infinite-dimensional version of the Crandall-Ishii lemma is inevitable.

%\appendix
%\section{Proof of \Cref{lma:gauge}}
%\label{app:lma-gauge}

%\section*{Acknowledgments}
%The author Hexiang WAN is very grateful to Professor Jie XIONG for generously providing him
%with the opportunity to visit the Department of Mathematics of SUSTech
%for a second time (2023/10--2024/09),
%during which this paper was completed.
%Meanwhile, WAN must express his gratitude to Dr. Ya-Fei ZHANG
%for her continuous encouragement throughout the writing of this paper.

%\section*{Acknowledgments}
%The author Hexiang WAN must express his gratitude to Dr. Ya-Fei ZHANG
%for her continuous encouragement throughout the writing of this paper.

%The three authors would also like to thank several anonymous reviewers for their suggestions.

%\bibliographystyle{siamplain}
\bibliography{references}

\end{document}